\newtheorem{lemma}{Lemma}[section]
\newtheorem{theorem}[lemma]{Theorem}
\newtheorem{proposition}[lemma]{Proposition}
\newtheorem{definition}[lemma]{Definition}
\numberwithin{equation}{section}
\renewcommand{\bar}{\overline}
\newcommand{\lV}{\bf V}
\newcommand{\bkn}{{\bf k}}
\newcommand{\nla}[1]{{\norm{#1}_{\sL}^2}}
\newcommand{\pla}[2]{{\left(#1 , #2\right)_{\sL}}}
\newcommand{\sL}{\mathscr{L}}
\newcommand{\sub}{G}
\title{Long time dynamics of 
Schr\"odinger and wave 
equations \\
on  flat tori}
\author{
M. Berti\footnote{ International School for Advanced Studies (SISSA), Via Bonomea 265, 34136, Trieste, Italy \newline
 \textit{Email: } \texttt{berti@sissa.it}} 
,
A. Maspero\footnote{ International School for Advanced Studies (SISSA), Via Bonomea 265, 34136, Trieste, Italy \newline
 \textit{Email: } \texttt{alberto.maspero@sissa.it}}
}
\begin{document}
\maketitle

\begin{abstract}
We consider a class of linear time dependent Schr\"odinger equations and   
quasi-periodically forced nonlinear Hamiltonian 
wave/Klein Gordon and Schr\"odinger equations on arbitrary  flat tori.
 For the linear Schr\"odinger equation, we prove a $t^\epsilon$ $(\forall \epsilon >0)$ upper bound for 
 the growth of the Sobolev norms as the time goes to infinity.
 For the nonlinear Hamiltonian PDEs we construct families of time quasi-periodic solutions.
 
 Both results are based on   ``clusterization properties" 
 of the eigenvalues of the Laplacian on a 
 flat torus and 
 on suitable ``separation properties" of {the singular  sites of  Schr\"odinger
 and wave operators}, 
 {which are integers}, in space-time Fourier lattice,  close to a cone or a paraboloid. 
 Thanks to these properties we are able to apply Delort abstract theorem \cite{del2} to control the speed of growth of the Sobolev norms, and 
 Berti-Corsi-Procesi abstract Nash-Moser  theorem \cite{BCP} to construct quasi-periodic solutions.
\end{abstract}


\section{Introduction}

In the last years many efforts have been made to understand the long time dynamics of Schr\"odinger and wave equations on compact manifolds.
Two important 
problems regard  upper and lower bounds for the  possible 
growth of the Sobolev norms of a solution, and the existence of global in time quasi-periodic solutions,
for which the Sobolev norm remains perpetually bounded.
Results concerning these problems have been obtained when the underlying manifold is either 
the {standard} 
torus $ \T^d = \R^d / (2\pi\Z)^d $ (see e.g. 
\cite{bou99, bourgain99, del2} for growth of Sobolev norms for linear systems and \cite{bou98, Bou05, EK10,   geng11, bebo12, bebo13, PP15,EGK,WDuke} for quasi-periodic solutions),
a Zoll manifold (see \cite{del2, maro, BGMR2} for growth  and \cite{BBP, GP2, BK} for quasi-periodic solutions), a Lie group or a homogeneous space \cite{BP, BCP, CHP}. 
The reason is that suitable information about the eigenvalues and eigenfunctions of the Laplace operator are relevant for 
the long time dynamics. 

In this paper we extend some of these results to the case the manifold is any {\it flat} torus
\begin{equation}\label{flat:torus}
\T^d_\sL := \R^d / \sL  
\end{equation}
where $ \sL$ is {a} lattice of $ \R^d $, with 
linearly independent generators 
$\bv_1, \ldots, \bv_d \in \R^d $, i.e.  
\begin{equation}
\label{def:Gamma}
\sL := \Big\{ {\mathop \sum}_{i=1}^d m_i \bv_i \ \colon \  m_i \in \Z \Big\} \, . 
\end{equation}
A particular case of \eqref{flat:torus}-\eqref{def:Gamma}  is 
the rectangular  torus 
\begin{equation}
\label{torus}
\T^d_\tL := (\R/\tL_1 \Z)\times \cdots \times (\R/\tL_d \Z)   
\end{equation}
with generators $ \bv_a = \tL_a {\bf e}_a $   
where $\tL_a >0  $ and 
$  ({\bf e}_a)_{a=1, \ldots, d} $ denotes  the canonical basis of $ \Z^d $. 

The equations we consider are  the 
 linear time dependent  Schr\"odinger equation
\begin{equation}
\label{sch}
\im \partial_t \psi = - \Delta \psi + V(t, x) \psi  \, ,  \quad 
x \in \T^d_\sL \,  , 
\end{equation}
where $V(t,x)$ is a smooth real valued potential,  
periodic on the lattice $\sL$, namely $V(t, x + \sum_{i}m_i \bv_i) = V(t,x), $ $ \forall m_i \in \Z$, 
and the quasi-periodically forced Hamiltonian nonlinear  wave (NLW) and nonlinear Schr\"odinger (NLS)  equations
\begin{equation}
\label{NLWS}
u_{tt} - \Delta u + m u = \epsilon f(\omega t, x, u) \, , \qquad
\im u_t - \Delta u + m u = \epsilon \tf(\omega t, x, u) \, , \quad x \in \T^d_\sL \, , 
\end{equation}
where the nonlinearities $ f, \tf$ are sufficiently regular, and the frequency vector 
$ \omega \in \R^n $ is  Diophantine.

For equation \eqref{sch}, we will show that,  on {\em any flat torus} $\T^d_\sL $, 
the  Sobolev norms of any solution grow at most as  $t^\epsilon$ ($\, \forall \epsilon >0$) when $t$ goes to infinity, see Theorem \ref{thm:main}.
Concerning the nonlinear wave and Schr\"odinger  equations \eqref{NLWS}
on any {\em flat torus} $ \T^d_\sL $, we 
construct families of quasi-periodic solutions 
{for a large set of  frequency vectors $ \omega = \lambda \bar \omega  $ with a 
given Diophantine direction $ \bar \omega \in \R^n $}, see Theorem \ref{main2}. 
In particular, both results hold for rectangular tori.

\smallskip

At the heart of the proofs is the analysis of certain spectral properties of the linear operators 
$-\Delta$, $\partial_{tt} - \Delta + m$ and $\im \partial_t - \Delta + m$ acting on (possibly time quasi-periodic) functions defined on the flat torus $\T^d_\sL $.  The eigenvalues of  $ - \Delta_{\sL}  $  are
\begin{equation}
\label{muj}
\mu_j := \norm{W j }^2   \, , \ \  j \in \Z^d \, , 
\end{equation}
where $ \norm{ y}^2 := \sum_{a=1, \ldots, d} y_a^2  $ is the euclidean norm of $ \R^d $ 
and  $ W  $ is the $d \times d$ invertible matrix 
\begin{equation}
\label{VeW}
W  := {\lV}^{-  \top} \, , \quad \lV := (\bv_1 | \cdots | \bv_d) 
\end{equation}
(with $ {\lV}^{-  \top} $ the transpose matrix of $ {\lV}^{- 1}  $) {associated to 
the dual lattice}. 
Since $ W $ is invertible we clearly have that $ \mu_j \sim \norm{ j }^2 $. 
In the particular case of a rectangular torus 
$$
 W = {\rm diag}(\nu_1, \ldots, \nu_d)  \, , \ \ \nu_a:= \tL_a^{-1} \, ,  \quad 
\mu_j 
= \sum_{a=1}^d \nu_a^2 \,  j_a^2  \, , 
\quad j_a \in \Z \,  ,  \quad j := ( j_a)_{a=1, \ldots, d} \, .
$$
We show that $\Z^d$ may be partitioned into dyadic clusters  (i.e.   $\max |j| \leq 2 \min |j| $ for all integers in the same cluster), 
such that if $j, j' \in \Z^d$ are in different clusters, then the pairs $(j, \mu_j)$, $(j', \mu_{j'})$  are well separated, {see 
 Theorem \ref{thm:sep}.  This holds true for any flat torus. 
This result extends Bourgain cluster decomposition  \cite{bou98}  to any flat torus (in particular rectangular ones).  
In order to obtain such clusterization, we prove  that a chain of distinct integer vectors
of $ \Z^d $ for which the corresponding pairs $(j, \mu_j)$ have uniformly bounded distance $ \Gamma $
contains at most $ \sim \Gamma^{C_1(d)} $ elements, see Proposition \ref{prop:lG}.

Concerning $\partial_{tt} - \Delta + m$ and $\im \partial_t - \Delta + m$, 
restricted to act on  subspaces of functions on $ \T^d_\sL $ which are quasi-periodic in time 
with Diophantine frequency vector $\omega \in \R^n $,  we show that their 
``singular" sites form time-space clusters which are ``well-separated" as well, namely 
their eigenvalues  
\begin{equation}
\label{mujl}
- (\omega \cdot \ell)^2 + \mu_j + m \, , \quad - \omega \cdot \ell + \mu_j + m \, , \qquad \ell \in \Z^n \, , \  j \in \Z^d \, , 
\end{equation}
which are in modulus less than $ 1 $, form 
chains with a bounded controlled length.  This means, roughly speaking,  that any sequence $(\ell_q, j_q)_{q=0, \ldots, L}\subset \Z^n \times \Z^d$ of indices such that (i) the correspondingly eigenvalues \eqref{mujl} have all modulus smaller than 1 and (ii) the distances between two consecutive indices are uniformly bounded, has necessarily a length $L$ which is bounded  in an appropriate quantitative way, 
{see Propositions \ref{thm:lcw} and \ref{thm:lcn}}. Notice that in the first case in \eqref{mujl} (corresponding to NLW)
 the singular sites $ \ell, j $ stay near a ``cone" (deformed by the lattice),  
while in the second  one (corresponding to NLS)  near a ``paraboloid".

The key argument to prove these separation properties consists, 
following the lines of  \cite{Bou05,bebo12}, to show that   certain bilinear forms,
 naturally associated to the quantities \eqref{muj}, \eqref{mujl} (see  \eqref{def:phi}, \eqref{def:phiNLW}), are nondegenerate when restricted to the 
finite dimensional subspace spanned by chains of singular sites. 
This allows to estimate the projections of the vectors of any chain on the subspace generated by the
vectors of the chain it-selves. If such a subspace has maximal dimension, we   deduce 
a bound for all the vectors of the chains. Otherwise, by an inductive argument on the dimension of such 
subspace, we deduce a bound for the  length of the chain. 
In case of \eqref{muj}, the associated bilinear form is  positive definite 
with a quantitative lower bound, for any flat torus, 
see Lemma \ref{detA}. In the first case  \eqref{mujl} (corresponding to NLW) 
the bilinear form has signature, but nevertheless the finite dimensional restrictions \eqref{defA}
we care about are nondegenerate {for any flat torus $ \T^d_\sL $, 
for almost all frequency vectors $\omega = \lambda \bar \omega $} 
with a prescribed  Diophantine direction $ \bar \omega $, see Lemma  \ref{lem:invW}
(in \cite{Bou05,bebo12, Wpre} further quadratic or higher order Diophantine conditions are required).
This is a consequence of the careful expansion \eqref{Pdef} and the invertibility of the compound matrices formed by the minors  of the invertible matrix $ W $ of the lattice. 

Once the spectral properties discussed above are proven, we apply Delort abstract theorem \cite{del2} to control the growth of Sobolev norms for \eqref{sch}, and Berti-Corsi-Procesi abstract Nash-Moser theorem \cite{BCP} to construct quasi-periodic solutions for \eqref{NLWS}.

\smallskip
Finally let us remark that in the last few years there has been an increase {of interest in studying the dynamics of Schr\"odinger  equations on  rectangular  rational or irrational tori.
 These papers can be roughly divided into two groups: the first one concerns Strichartz estimates and well posedness results \cite{bou07, catoire10, guo-oh, bourgain15, deng17}. The second group   analyzes phenomenon of growth of Sobolev norms 
 \cite{deng-germain17, deng217, staffilani_wilson18}, see 
also   \cite{visciglia17}  for $ 2 $ and $ 3 $ dimensional arbitrary compact  manifolds.
It would be interesting to see if the techniques developed in this paper could allow to extend 
these results for flat tori, as well as those in
\cite{maspero_procesi, GHHMP} concerning 
stability and instability of finite gap solutions of NLS 
on the {standard} torus}.


We state now more precisely our results.

\subsection{Growth of Sobolev norms for  linear, time dependent Schr\"odinger equations}

Rescaling the  spatial variables,  \eqref{sch} 
is equivalent to the following Schr\"odinger equation on the standard torus
\begin{equation}
\label{isch}
\im \partial_t \psi = - \Delta_\sL \psi + V(t, x) \psi  \, ,  \quad 
x \in \T^d\,  , 
\end{equation}
 with  the anisotropic Laplacian
$$
\Delta_\sL := \sum_{a,b = 1}^d  \partial_{x_a} [W^\top W]_{a}^b  \partial_{x_b} =
\sum_{a,b,l = 1}^d  W^a_l W_{l}^b  \partial^2_{x_a x_b}   \, ,
$$
where $ W = (W_a^b)_{a,b=1, \ldots, d} $ is the matrix \eqref{VeW} associated to the {dual} lattice  
(with row index $a$ and  column index $ b $).
In the particular case of  the  rectangular torus \eqref{torus} 
the anisotropic Laplacian reduces to 
$$
\Delta_{\vec \nu} := \sum_{a=1}^d \nu_a^2 \,  \partial_{x_a}^2 \, , \qquad \nu_a = \tL_a^{-1} \, , \qquad 
\vec \nu := ( \nu_a)_{a=1, \ldots, \nu} \, . 
$$
The eigenvalues of  $\Delta_\sL  $ are $ - \mu_j$ with $ \mu_j $ defined in  \eqref{muj}.
In \eqref{isch}, for  simplicity of notation, 
 we denoted again by $V$ the  potential in the new rescaled variables, so   that $V(t,x)  $ is a function in 
 $ C^\infty(\R \times \T^d,   \R)$.
As phase space we consider  Sobolev spaces of periodic functions
$$
H^r = \Big\{ \psi = \sum_{j \in \Z^d} \psi_j e^{{\rm i} j \cdot x }  \in L^2(\T^d, \C) \,  
\colon \norm{\psi}_r^2 := \sum_{j \in \Z^d} \la j \ra^{2r} \abs{\psi_j}^2 < \infty \Big\}
$$
where $ \la j \ra := \sqrt{1+|j|^2} $.

Our first result is an upper bound on the speed of growth of Sobolev norms of \eqref{isch}. 
\begin{theorem}[Growth of Sobolev norms]\label{thm:main}
Consider the Schr\"odinger equation \eqref{isch} with potential 
 $V $ in $ C^\infty(\R \times \T^d, \R)$ satisfying 
\begin{equation}
\label{V.prop}
\sup_{ (t,x)\in \R\times \T^d }\abs{\partial_t^\ell \partial_x^{\alpha} V(t,x)} \leq C_{\ell,\alpha} \, , \quad \forall \ell \in \N \, , \ \  \forall \alpha \in \N^d . 
\end{equation}
Then,  for any $r >0$, for any $\epsilon >0$, there exists a constant $C_{r,\epsilon} >0$ such that each solution $\psi(t)$ of  \eqref{isch} with initial datum $\psi_0 \in H^r$ fulfills
\begin{equation}
\label{growth}
\norm{\psi(t)}_r \leq C_{r,\epsilon} \la t \ra^\epsilon \, \norm{\psi_0}_r . 
\end{equation}
\end{theorem}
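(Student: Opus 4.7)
The plan is to deduce Theorem~\ref{thm:main} from Delort's abstract result \cite{del2}, which produces a $t^\epsilon$ upper bound for the Sobolev growth of a Schr\"odinger equation $\im \partial_t \psi = (H_0 + V(t))\psi$ whenever the eigenvalues of the self-adjoint operator $H_0$ admit a suitable clusterization and the time-dependent potential $V(t,\cdot)$ is uniformly bounded in every Sobolev norm. In our setting $H_0 = -\Delta_\sL$, whose eigenvalues are $\mu_j = \norm{W j}^2$, $j \in \Z^d$, and $V$ satisfies \eqref{V.prop}, so the technical burden is concentrated in establishing the required clusterization for the anisotropic spectrum.

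The structural input is provided by Theorem~\ref{thm:sep}: a partition $\Z^d = \bigsqcup_\alpha \Omega_\alpha$ into dyadic clusters such that distinct clusters are well separated in the joint $(j, \mu_j)$ metric. This is exactly what Delort's scheme needs in order to build quasi-resonant spectral blocks and conjugate \eqref{isch} by unitary transformations that decouple different blocks up to smoothing remainders, iterated with Egorov-type bounds. To obtain Theorem~\ref{thm:sep} I would follow the route sketched in the introduction: define the clusters as the equivalence classes of the relation generated by ``$j \sim j'$ iff $|j-j'| + |\mu_j - \mu_{j'}| \leq \Gamma$'', and bound their diameter using the chain-length estimate of Proposition~\ref{prop:lG}.

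The crux is Proposition~\ref{prop:lG}. Attach to the quadratic form $\mu_j = \langle j, W^\top W j\rangle$ the symmetric bilinear form $\phi(j,j') := \langle j, W^\top W j'\rangle$. Along any chain $j_0,\ldots,j_L$ with consecutive distance $\leq \Gamma$ in both $j$ and $\mu_j$, all the values $\phi(j_q, j_p)$ are polynomially bounded in $\Gamma$. Choose a maximal linearly independent subset $j_{q_1},\ldots,j_{q_k}$ of the chain; the Gram matrix of $\phi$ on this subset is positive definite and its determinant is bounded below by a constant depending only on the lattice (Lemma~\ref{detA}), so inverting it expresses every other $j_q$ as a linear combination of $j_{q_1},\ldots,j_{q_k}$ with coefficients polynomial in $\Gamma$. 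If $k=d$ this directly gives $|j_q| \lesssim \Gamma^{O(1)}$; otherwise one descends on $k$, restricting $\phi$ to the $\Z$-span of the chain and invoking the inductive hypothesis, to obtain the desired bound $L \lesssim \Gamma^{C_1(d)}$.

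With Theorem~\ref{thm:sep} secured, the verification of Delort's functional hypotheses for \eqref{isch} is routine: by \eqref{V.prop} the multiplication operator by $V(t,\cdot)$ is a time-uniformly bounded endomorphism of each $H^r$, while the spectral clusterization plays the role of the separation assumption. Invoking \cite{del2} then yields the constant $C_{r,\epsilon}$ and the bound \eqref{growth}. The main obstacle is Proposition~\ref{prop:lG}: in the rectangular case $W$ is diagonal and the bilinear form decouples trivially, but for a general lattice one must exploit the uniform positivity of $W^\top W$ together with the inductive descent on the dimension of the span of the chain, which is the real technical novelty needed to extend Bourgain's clusterization \cite{bou98} to arbitrary flat tori.
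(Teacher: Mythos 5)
Your high-level plan is the same as the paper's: reduce to Delort's abstract scheme, supply the needed spectral clustering via Theorem~\ref{thm:sep}, and prove the chain-length bound of Proposition~\ref{prop:lG} by a Gram-matrix argument whose determinant is bounded below via the compound matrix of $W$ (Lemma~\ref{detA}). That is correct and is indeed where the paper concentrates its effort. However, two steps as you have written them would not go through.

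First, the equivalence relation defining the clusters. You propose $j\sim j'$ iff $|j-j'|+|\mu_j-\mu_{j'}|\le \Gamma$ with a fixed $\Gamma$. With a fixed threshold, two points in different clusters are only guaranteed to satisfy $|j-j'|+|\mu_j-\mu_{j'}|>\Gamma$, which is a constant gap. What Theorem~\ref{thm:sep}(2) requires, and what the verification of Delort's hypothesis (H2) actually uses, is the scale-dependent separation $|j-j'|+|\mu_j-\mu_{j'}|>(|j|+|j'|)^\delta$: it is precisely because the remainder $R$ in the homological equation $[X,-\Delta_\sL]=W+R$ is supported on $|j-j'|\ge \tfrac12(|j|+|j'|)^\delta$ that $R\in\cA^{-\infty}$. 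The paper's relation \eqref{rel} uses the threshold $(|j_q|+|j_{q+1}|)^\delta$, which grows with $|j|$; this is then converted to a $\Gamma$-chain with $\Gamma\sim(2|j'|)^\delta$, and only then Proposition~\ref{prop:lG} is invoked. Your fixed-$\Gamma$ relation gives clusters of bounded diameter but with only $O(1)$ separation, which is not enough for (H2).

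Second, the Gram-matrix step. You assert that along a chain ``all the values $\phi(j_q,j_p)$ are polynomially bounded in $\Gamma$,'' and then invert the Gram matrix of a linearly independent subset of the $j_{q_i}$ themselves. Both points fail: $\phi(j_q,j_p)=\la Wj_q,Wj_p\ra$ is typically of size $|j_q|\,|j_p|$, which is unbounded along a chain even if the increments are $\le\Gamma$, and correspondingly the Gram matrix of the $j_{q_i}$ has uncontrolled entries, so Cramer's rule gives no useful bound. The quantity that is small is $\phi\big(j_{q_0},\,j_q-j_{q_0}\big)$ (the second estimate in \eqref{G.prop}), which follows from expanding $\mu_{j_q}=\mu_{j_{q_0}}+2\phi(j_{q_0},j_q-j_{q_0})+\mu_{j_q-j_{q_0}}$ and using that $\mu_{j_q}-\mu_{j_{q_0}}$ and $|j_q-j_{q_0}|$ are both controlled along the chain. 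Accordingly one must build the Gram matrix on the \emph{differences} $f_i=j_{q_i}-j_{q_0}$, restricted to a local piece of the chain $|q_i-q_0|\le L^\varsigma$ so that $|f_i|\le L^\varsigma\Gamma$, bound $P_\sub j_{q_0}$ via $A x=\bb$ with $\bb_l=\phi(j_{q_0},f_l)$, and only afterwards close the loop by comparing the resulting diameter bound $\lesssim (L^\varsigma\Gamma)^{2d+1}$ with the cardinality of the chain to solve for $L$. This last bootstrapping step (choosing $\varsigma$ small so that the $L^\varsigma$ factors can be absorbed) is not visible in your sketch, and the dimensional descent (Case~2) is on whether the \emph{local} span around some $q_0$ is a proper subspace of the global span $\sub$, not directly on $k=d$ versus $k<d$.
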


The proof of Theorem \ref{thm:main} is based on a result of  ``clustering" for 
the eigenvalues of $ - \Delta_{\sL} $  which was proved originally by Bourgain 
 \cite{bourgain99} (see also \cite{bou98, Bou05}) for the Laplacian on the standard torus $ \T^d $, 
and that we extend here to arbitrary flat tori $ \T^d_{\sL} $, non just rectangular, 
see Theorem \ref{thm:sep}. It is interesting that such a result holds for {\it any flat} torus, 
 the ultimate reason is that {Lemma \ref{detA}} holds for any lattice $ \sL $. 
 
Bourgain \cite{bourgain99} used this decomposition result  to  prove  
a $\la t\ra^\epsilon$  upper
 bound for the growth in time of the Sobolev norms of 
the solutions of the linear Schr\"odinger equation \eqref{sch} on $\T^d$, 
in presence of a smooth potential with arbitrary  time dependence.
We also mention that,  
if the potential is analytic and quasi-periodic in time, 
Bourgain \cite{bou99}  proved a logarithmic growth for the solutions of \eqref{sch}
 in $d = 1 $, and, in $ d = 2 $, under a  smallness assumption on the potential. 
Using KAM techniques, Eliasson-Kuksin \cite{EK1} proved  that, 
for an analytic small potential and  a large set of frequencies, the
linear Schr\"odinger equation \eqref{sch} on $ \T^d $ can be 
conjugated to a block-diagonal dynamical system whose
 solutions  have bounded Sobolev norms. 

The proof of the result of Bourgain \cite{bourgain99} was greatly simplified by Delort \cite{del2} who proposed  an abstract  framework which allows to prove a $ \la t\ra^\epsilon $ bound  
{also on other manifolds, for example Zoll ones}; see also \cite{fang} for logarithmic bounds in case of potentials  on $\T^d$ with Gevrey regularity. 

It is the abstract result of Delort \cite{del2}, combined with the novel clustering of the eigenvalues of
$ - \Delta_\sL $ on $  \T^d_\sL  $ in Theorem \ref{thm:sep}, 
 that we employ to deduce Theorem \ref{thm:main}.

\smallskip

Upper bounds of the form \eqref{growth} have a long history.
The first results are due to Nenciu \cite{nen}, who proved, in an abstract framework,  a $ \la  t \ra^\epsilon$ upper bound for the expected value of the energy  in case the system has  increasing spectral gaps and bounded perturbation, and by 
Duclos,  Lev  and \v S\v tov\'\i \v cek \cite{duclos} in case of decreasing spectral gaps. 

Such results have been recently improved,  in an abstract setup,  by  Maspero-Robert  \cite{maro}  
for some unbounded perturbations, and by Bambusi-Gr\'ebert-Maspero-Robert \cite{BGMR2} for a larger class of unbounded perturbations and 
 without spectral gaps assumptions. 
Unfortunately these results do not apply to the Schr\"odinger equation {\eqref{sch}} on $ \T^d $.

Finally we mention that 
potentials $V(t,x ) $ which provoke  Sobolev norms explosion have been constructed 
by  Bourgain  \cite{bou99} 
for a Klein-Gordon and Schr\"odinger equation on $\T$, by Delort \cite{del} 
for the harmonic oscillator on $\R$, by Bambusi-Gr\'ebert-Maspero-Robert  \cite{BGMR1} 
for the Harmonic oscillators on $\R^d$, $d \geq 1$, and by Maspero \cite{ma18}
for systems enjoying an asymptotically constant spectral gap condition.

\subsection{Quasi-periodic solutions for NLW and NLS equations}
As before, we first rescale the spatial variables, recasting \eqref{NLWS} to  the 
nonlinear wave equation (NLW) and nonlinear Schr\"odinger equation (NLS)
 with anisotropic Laplacian  on the torus $\T^d$, 
\begin{equation}
\label{NLWa}
u_{tt} - \Delta_\sL u + m u = \epsilon f(\omega t, x, u) \, ,
\qquad 
\im u_t - \Delta_\sL u + m u = \epsilon \tf(\omega t, x, u) \,  , 
 \quad
x \in \T^d \, .
\end{equation}
Concerning regularity we assume that the nonlinearity $f \in C^\tq(\T^n \times \T^d \times \R; \R)$, resp. 
$\tf \in C^\tq(\T^n \times \T^d \times \C; \C)$ in the real sense (namely as a function of ${\rm Re}(u)$, ${\rm Im}(u)$),
for some $\tq$ large enough.
We also require that 
\begin{equation}
\label{tfham}
\tf(\omega t, x, u) = \partial_{\bar u} F(\omega t, x, u) \in \R \,  , \qquad \forall u \in \C \, , 
\end{equation}
so that the NLS equation is Hamiltonian.

Concerning the frequency $\omega \in \R^n$, we constrain it to  a fixed direction, namely
\begin{equation}
\label{omega}
\omega = \lambda \bar \omega \, , \qquad \lambda \in \Lambda := [1/2, 3/2] \, , \qquad \abs{\bar \omega}_1:= \sum_{p=1}^n |\bar\omega_p| \leq 1 \, . 
\end{equation}
The vector $\bar\omega \in \R^n$ is assumed to be Diophantine, i.e.  for some $\tau_0 \geq n$, 
\begin{equation}
 \label{omega1}
 \abs{ \bar \omega \cdot \ell} \geq \frac{2 \gamma_0}{|\ell|^{\tau_0}} \, , \quad \forall \ell \in \Z^n\setminus \{0\} \, ,
 \end{equation} 
where, here and below
 $| \ell | := |\ell|_\infty = \max(|\ell_1|, \ldots, |\ell_n|)$.

The search for quasi-periodic solutions of \eqref{NLWa} reduces to finding solutions $u(\vf, x)$, 
periodic in all the  variables $(\vf, x) \in \T^n \times \T^d$,  of the equations
\begin{equation}
\label{NLWb}
(\omega\cdot \partial_\vf)^2 u - \Delta_\sL u  + m u  = \epsilon f(\vf, x, u ) \, , 
\qquad 
\im \omega \cdot \partial_\vf u - \Delta_\sL u + m u = \epsilon \tf(\vf, x, u) \, , 
\end{equation}
with $u$ belonging to some Sobolev space $H^s(\T^n \times \T^d)$ of 
real valued functions, in the case of NLW,  respectively complex valued for NLS.

\begin{theorem}[Quasi-periodic solutions of NLW and NLS]
\label{main2}
Consider the NLW equation \eqref{NLWa}  or the NLS equation \eqref{NLWa}-\eqref{tfham}
and assume \eqref{omega}-\eqref{omega1}.
Then there are $s,  \tq \in \R $ 
such that, for any $ f  $, $ {\mathtt f} \in C^{\tq}$ and
for all $\e\in[0,\e_{0})$ with $\e_{0}>0$ small enough, there is a map
$$
u_\e\in C^{1}([1/2,3/2],H^{s}(\T^n \times \T^d)) \, ,\qquad
\sup_{\lambda \in [1/2,3/2]} \|u_\e (\lambda)\|_{H^{s}(\T^n \times \T^d)}\to 0,\mbox{ as }\e\to0 \,  ,
$$
and a Cantor-like set $\cC_{\e}\subset[1/2,3/2]$, satisfying $ {\rm meas(\cC_{\e})}\to1\mbox{ as }\e\to 0 $, 
such that, for any $\lambda \in\cC_{\e}$, $u_\e (\lambda)$ is a solution of
\eqref{NLWb}, with $\omega=\lambda \bar \omega $.
\end{theorem}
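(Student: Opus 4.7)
\textbf{Proof plan for Theorem \ref{main2}.} The approach is to apply the abstract Nash--Moser theorem of Berti--Corsi--Procesi \cite{BCP}. A quasi-periodic solution of \eqref{NLWa} with frequency $\omega = \lambda \bar\omega$ is a zero $u(\vf, x)$ of the functional
$$
F(\lambda, \epsilon, u) := L_\omega u - \epsilon f(\vf, x, u), \qquad L_\omega := (\omega \cdot \partial_\vf)^2 - \Delta_\sL + m,
$$
on the Sobolev scale $H^s(\T^n \times \T^d)$, with the analogous NLS functional with $L_\omega := \im \omega \cdot \partial_\vf - \Delta_\sL + m$. In Fourier, $L_\omega$ is diagonal with eigenvalues \eqref{mujl} indexed by $(\ell, j) \in \Z^n \times \Z^d$, and the singular sites are those $(\ell, j)$ at which the corresponding eigenvalue has modulus below $1$.

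The first step is to verify the structural hypotheses of \cite{BCP} that are independent of the lattice geometry: tame estimates for the composition operator $u \mapsto f(\vf, x, u)$ on $H^s(\T^n \times \T^d)$, provided $\tq$ is sufficiently large with respect to the Nash--Moser loss of derivatives; $C^1$ dependence on $\lambda \in [1/2, 3/2]$; and preservation of the Hamiltonian structure \eqref{tfham} in the NLS case. These are standard consequences of $f, \tf \in C^\tq$.

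The heart of the matter is the verification of the ``separation property of singular sites'' required by \cite{BCP}, which asks that the singular sites in $\Z^n \times \Z^d$ decompose into clusters of uniformly bounded diameter, with mutually well-separated clusters of polynomially bounded cardinality. This is precisely the content of Propositions \ref{thm:lcw} (cone, NLW) and \ref{thm:lcn} (paraboloid, NLS): any chain of singular sites whose consecutive differences are bounded by $\Gamma$ has length at most $\Gamma^{C(d,n)}$. Combined with the Diophantine bound \eqref{omega1} on $\bar\omega$ and first Melnikov conditions imposed through $\lambda$, the linearized operator at each approximate solution splits into diagonal blocks invertible with polynomial loss, while off-diagonal terms are controlled by the multiscale analysis of \cite{BCP}.

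The main obstacle will be the measure estimate for the Cantor set $\cC_\e$. One must excise from $\Lambda = [1/2, 3/2]$ all values of $\lambda$ for which either a single eigenvalue in \eqref{mujl} or the determinant of a block over a cluster of singular sites is anomalously small, and show that the remaining set has measure tending to $1$ as $\epsilon \to 0$. For NLW this is the delicate point, since the quadratic form in \eqref{mujl} has mixed signature and a pointwise lower bound on a single eigenvalue is insufficient. Here the nondegeneracy Lemma \ref{lem:invW}, which hinges on the invertibility of the compound matrices of minors of $W$, reduces the resonance condition on an entire cluster to the non-vanishing of an explicit polynomial in $\lambda$ (via the expansion \eqref{Pdef}); a Fubini argument, together with R\"ussmann-type sublevel estimates, then yields the desired measure bound. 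Once all hypotheses of \cite{BCP} are verified, its Nash--Moser iteration converges to the claimed solution $u_\epsilon(\lambda) \in H^s$ with $\|u_\epsilon(\lambda)\|_{H^s} \to 0$ as $\epsilon \to 0$, completing the proof.
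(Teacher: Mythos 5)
Your proposal is correct and follows essentially the same route as the paper: apply the Berti--Corsi--Procesi abstract Nash--Moser theorem, with the crux being the chain-length bounds for singular sites (Propositions \ref{thm:lcw} and \ref{thm:lcn}), which for NLW hinge on the compound-matrix non-degeneracy Lemma \ref{lem:invW} via the expansion \eqref{Pdef}. One small clarification of emphasis: in the paper Lemma \ref{lem:invW} (together with Lemma \ref{lem:setcG}) is used to establish the separation property, Hypothesis 3, on a large-measure set $\wt\Lambda$ of $\lambda$'s, while the final measure estimate \eqref{meas.bad1} for the Cantor set $\cC_\e$ is then deferred verbatim to \cite{bebo12,bebo13,BCP}; your last paragraph intertwines these two steps, but the mathematical content is the same.
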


In order to prove Theorem \ref{main2} we verify that the assumptions of the abstract Theorems 2.16--2.18 of   \cite{BCP} are met. For the reader convenience, we report such results in Appendix \ref{app:BCP}.
The key property to prove is that the singular sites of the 
quasi-periodic  wave and Schr\"odinger
operators
$ (\omega\cdot \partial_\vf)^2  - \Delta_\sL   + m $ and
$ \im \omega \cdot \partial_\vf  - \Delta_\sL  + m  $, 
 form clusters in space-time Fourier indices which are sufficiently separated. We prove 
 properties of this kind 
 for any flat torus $ \T^d_\sL $ in sections \ref{thm3.1} and \ref{sec41}.

Existence of quasi-periodic solutions for analytic nonlinear Schr\"odinger equations with a convolution potential
was first proved by Bourgain \cite{bou98} on $ \T^2 $    
and then extended in \cite{Bou05} to arbitrary dimension and for wave type pseudo-differential equations.
The convolution potential is used as a parameter to fulfill suitable non-resonance conditions. 
The proof is based on a multiscale approach, originated by Anderson localization  theory, to invert 
approximately the linearized operators arising at each step of a Newton iteration.
This approach requires ``minimal" non-resonance conditions, and separation properties for the clusters of singular sites. 
Subsequently existence of quasi-periodic solutions of NLS with  convolution potential 
which are also linearly stable has been proved by Eliasson-Kuksin
\cite{EK10}, who managed to reduce the linearized equations to a block-diagonal dynamical system,
imposing the stronger second order Melnikov non-resonance conditions.
For the completely resonant NLS, which has no external parameters, we refer to  Wang \cite{WDuke} and 
Procesi-Procesi \cite{PP15, pp_bumi} who also proved  the linear stability of the torus; see also 
\cite{EGK} for the beam equation. Unfortunately no reducibility results for NLW on $ \T^d $ are available. 

Theorem \ref{main2} extends the results in \cite{bebo12, bebo13} valid for a square torus $ \T^d $
to the case of an arbitrary flat torus $ \T^d_\sL  $. The technique  for proving the separation properties
of the singular sites
in sections  \ref{thm3.1} and \ref{sec41} 
could  also be applied to extend 
the existence results of quasi-periodic solutions formulated as in \cite{Bou05}.
We find interesting  that,  with this multiscale approach, the  
irrationality properties of the lattice generators $ \sL $ do not play any role. Such properties
 could be relevant for proving also reducibility results, 
 i.e. for imposing second order Melnikov non-resonance conditions. 
We also mention that Theorem \ref{main2} 
improves, in the case of NLW,  the results in \cite{bebo12, BCP}, 
because it requires {$ \bar \omega $ } to satisfy only the standard Diophantine condition 
\eqref{omega1}, and not  additional quadratic Diophantine conditions. 
This is due to the more careful analysis  in section \ref{sec:5.3} 
 to verify  separation properties of the singular sites. 

\medskip
\noindent{\bf Acknowledgments.} 
The authors thank T. Kappeler for posing the question for flat tori. 
During the preparation of this work we were partially supported by  Prin-2015KB9WPT and  Progetto  GNAMPA - INdAM 2018 ``Moti stabili ed instabili in equazioni di tipo Schr\"odinger''.

\section{Growth of Sobolev norms}

As we anticipated in the introduction, the key step 
for applying Delort theorem \cite{del2} is to prove the following 
result about clusterization properties of the eigenvalues $ \mu_j $ 
of $ - \Delta $ on $ \T^d_\sL $, i.e. of $ - \Delta_\sL $ on $ \T^d $. 
\begin{theorem}[Clustering of eigenvalues of $ - \Delta_\sL $]
\label{thm:sep}
There exist constants $ \delta_0 (d) >0$, $ C( \sL,d)>0$, 
$ \tC (\sL, d) > 0 $, $\tc(d) >1 $ such that, for any  $ 0 < \delta < \delta_0 (d) $,
 there exists a  partition 
$(\Omega_\alpha)_{\alpha \in A}$ of $\Z^d$ satisfying:  
\begin{itemize}
\item[(1)]  $\forall \alpha \in A$,  $\forall j_1, j_2 \in \Omega_\alpha$, we have 
\begin{equation}
\label{cl1}
\abs{j_1 - j_2} + \abs{ \mu_{j_1} - \mu_{j_2}} \leq C( \sL,d) \left( |j_1| + |j_2|\right)^{\tc(d) \delta } \, . 
\end{equation}
Moreover each $ \Omega_\alpha $ is finite and, either $\max_{j \in \Omega_\alpha} |j| \leq \tC(\sL, d)$  or  $\max_{j \in \Omega_\alpha} |j| \leq 2 \min_{j \in \Omega_\alpha} |j|$.

\item[(2)] $ \forall \alpha, \beta \in A $, 
$ \alpha \neq \beta$, $\forall j_1 \in \Omega_\alpha$, $\forall j_2 \in \Omega_\beta$, we have 
\begin{equation}
\label{cl2}
\abs{j_1 - j_2} + \abs{ \mu_{j_1} - \mu_{j_2}} > \left( |j_1| + |j_2|\right)^{\delta}  \, . 
\end{equation}
\end{itemize}
\end{theorem}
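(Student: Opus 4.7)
The plan is to take as $\Omega_\alpha$ the equivalence classes of the relation on $\Z^d$ defined by: $j \sim j'$ if and only if there exists a finite chain $j = j_0, j_1, \dots, j_L = j'$ in $\Z^d$ satisfying
\begin{equation*}
\abs{j_q - j_{q+1}} + \abs{\mu_{j_q} - \mu_{j_{q+1}}} \leq \bigl( \abs{j_q} + \abs{j_{q+1}} \bigr)^\delta, \qquad q = 0, \dots, L-1.
\end{equation*}
Reflexivity, symmetry and transitivity are obvious, so this does partition $\Z^d$. Property (2) is then immediate: two elements in distinct classes must fail the one-step condition, which is precisely \eqref{cl2}.

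For property (1), the decisive input is Proposition \ref{prop:lG}, which bounds the number of distinct elements in a chain of uniformly bounded step size $\Gamma$ by $O(\Gamma^{C_1(d)})$. Given $j_1, j_2 \in \Omega_\alpha$, fix a chain $j_1 = j_0, \dots, j_L = j_2$ realizing the relation and set $N := \max_q \abs{j_q}$. Each step is then bounded by $(2N)^\delta$, so Proposition \ref{prop:lG} gives $L \leq C(\sL,d) (2N)^{\delta C_1(d)}$, and summing the one-step inequalities yields
\begin{equation*}
\abs{j_1 - j_2} + \abs{\mu_{j_1} - \mu_{j_2}} \leq L \,(2N)^\delta \leq C(\sL, d)\, N^{\delta(C_1(d)+1)}.
\end{equation*}
Applying the same argument to the subchain from $j_1$ to the index realizing the maximum gives the self-consistent estimate $N \leq \abs{j_1} + C(\sL, d) N^{\delta(C_1(d)+1)}$. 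Choosing $\delta_0(d) := 1/(C_1(d)+1)$ and $\tC(\sL,d)$ large enough so that $C(\sL,d) N^{\delta(C_1(d)+1)} \leq N/2$ for all $N > \tC(\sL,d)$ and $\delta < \delta_0(d)$, the bootstrap closes: for $\abs{j_1} > \tC(\sL,d)$ we obtain $N \leq 2 \abs{j_1}$, which combined with the previous display gives \eqref{cl1} with $\tc(d) := C_1(d) + 1$, and also shows that all elements of $\Omega_\alpha$ lie in the dyadic annulus $\tfrac12 \abs{j_1} \leq \abs{j} \leq 2 \abs{j_1}$. For classes meeting the ball $\{\abs{j} \leq \tC(\sL,d)\}$, the same self-consistent inequality confines the class to an enlarged such ball, yielding the alternative $\max_{j \in \Omega_\alpha}\abs{j} \leq \tC(\sL,d)$ after adjusting the constant.

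The principal obstacle is Proposition \ref{prop:lG} itself, whose proof rests on the quantitative nondegeneracy of the bilinear form of Lemma \ref{detA}; this is where the genuine algebraic content of extending Bourgain's decomposition from $\T^d$ to an arbitrary flat torus lies, and it is decoupled from the combinatorial scheme above. The remaining delicate point in my proof plan is calibrating $\delta_0(d)$ and $\tC(\sL,d)$ so that the scale-consistency bootstrap on $N$ closes uniformly across all dyadic scales, while keeping the constants in \eqref{cl1} independent of the particular class $\Omega_\alpha$.
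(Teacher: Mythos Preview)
Your approach is essentially the same as the paper's: define the partition via the equivalence relation generated by the one-step condition, then use Proposition~\ref{prop:lG} to bound chain lengths and run a self-consistency bootstrap on the maximal norm $N$ appearing in a connecting chain. The paper's argument in Section~2.2 follows exactly this scheme (with $\Phi(j)=(j,\mu_j)$ in place of your sum $|j-j'|+|\mu_j-\mu_{j'}|$, which differs only by harmless constants).

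There is one small but genuine slip. You set $\delta_0(d):=1/(C_1(d)+1)$ and then ask for a threshold $\tC(\sL,d)$ such that $C(\sL,d)\,N^{\delta(C_1(d)+1)}\leq N/2$ for \emph{all} $N>\tC$ and \emph{all} $\delta<\delta_0(d)$. No such $\tC$ exists: as $\delta\uparrow\delta_0(d)$ the exponent $\delta(C_1(d)+1)$ approaches $1$, and the inequality $C N^{\eta}\leq N/2$ forces $N\geq (2C)^{1/(1-\eta)}\to\infty$. Thus $\tC$ would have to depend on $\delta$, contradicting the theorem's statement. The paper avoids this by taking $\delta_0(d):=\tfrac{1}{2(C_1(d)+1)}$, so that the exponent stays below $1/2$ uniformly and one may take $\tC(\sL,d)$ of order $C(\sL,d)^2$. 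With this single adjustment your argument goes through verbatim; everything else, including your identification of Lemma~\ref{detA} as the place where the lattice geometry enters, matches the paper.
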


We postpone the proof of this result to  section \ref{sec:L}, and 
we now show that all the assumptions of  Delort theorem \cite{del2} are met.
We begin with some preliminary notation.
For $j \in \Z^d$, denote by $\Pi_j$ the spectral projector
\begin{equation}\label{expo}
{\Pi_j u  := \la u, {\mathtt e}_j \ra_{L^2}  {\mathtt e}_j   \, , \quad 
{\mathtt e}_j  := \frac{e^{\im j \cdot x}}{( 2\pi)^{d/2} }} \, .
\end{equation}

\begin{definition}\label{def:2}
For any $\sigma \in \R$ we denote by $\cA^\sigma$ the space of smooth in time families  of continuous operators 
{$ Q(t)$} from $C^\infty(\T^d)$ to $\cD'(\T^d)$ such that, $\forall k, N \in \N$, there is $C_{k,N} >0$ such that
$$
\norm{\Pi_j \, \partial_t^k Q(t)\,  \Pi_{j'}}_{\cL(L^2)} \leq C_{k,N} \frac{(1 + |j| + |j'|)^\sigma}{\left(1 +  |j - j'|\right)^N} \, , \quad \forall j, j' \in \Z^d , \ \  \  \forall t \in \R \, .
$$
We denote $\cA^{- \infty} := \bigcap_{\sigma \in \R} \cA^\sigma$.
\end{definition}
We remark that each $ Q $ in $\cA^\sigma$ extends to a smooth family in $ t $ 
of bounded operators from $ H^s (\T^d) $ 
to $ H^{s-\sigma} (\T^d)$. Moreover, 
if $Q \in \cA^\sigma$, then its adjoint $Q^* \in \cA^\sigma$ (the adjoint is with respect to the standard $L^2(\T^d)$ scalar product).
Moreover, for any  $\sigma_1, \sigma_2 \in \R$ one has  $\cA^{\sigma_1} \circ \cA^{\sigma_2} \subseteq \cA^{\sigma_1 + \sigma_2}$ and,  if $\sigma_1 \leq \sigma_2$, then $\cA^{\sigma_1}\subseteq \cA^{\sigma_2}$.
Finally, if the potential $V \in C^\infty(\R \times \T^d, \R)$ fulfills \eqref{V.prop}, then, 
as a multiplication operator, $V \in \cA^0$.

\smallskip
We now state Delort theorem \cite{del2}, specified to the case of the torus.
\begin{theorem}[Delort]
Assume that for any $\sigma \in \R$, there exist subspaces $\cA^\sigma_D$, $\cA^\sigma_{ND}$ of $\cA^\sigma$, invariant under $Q \mapsto Q^*$, such that the following holds true:
\begin{itemize} 
\item[(H1)] For any $Q \in \cA^\sigma$, there exist $Q_D \in   \cA^\sigma_{D}$ and $Q_{ND} \in  \cA^\sigma_{ND}$ such that $Q = Q_{D} + Q_{ND}$.
\item[(H2)] There is $\rho >0$ such that for any $\sigma \in \R$, any  $W \in \cA^\sigma_{ND}$, there exist $X \in \cA^{\sigma-\rho}_{ND}$ and $R \in \cA^{ -\infty}$ 
solving the homological equation
\begin{equation}
\label{hom.eq}
 [  X, -\Delta_\sL  ]   = W + R \, . 
\end{equation}
Here $[A,B]:= AB- BA$ is the commutator of the operators $A,B$.
\item[(H3)] There is an element $\cD \in \cA^2$, independent of time and commuting with $\Delta_\sL$, which defines an equivalent norm on $H^r(\T^d)$, {i.e. for some constants  $ c, C >0$} 
\begin{equation}
\label{eq.norm}
c \norm{u}_{H^r(\T^d)} \leq  \norm{ \cD^{r/2} u }_{L^2(\T^d)} \leq C \norm{u}_{H^r(\T^d)} \, , \quad \forall u \in H^r(\T^d) \, ,
\end{equation}
and, furthermore,
\begin{equation}
\label{QD}
[Q, \cD] \in \cA^{- \infty}  , \quad \forall Q \in \cA^\sigma_{D} \, . 
\end{equation}
\end{itemize}
Then, for all $ r, \epsilon >0$,  there exists a constant $C_{r,\epsilon} >0$ such that each solution $\psi(t)$ of 
the linear Schr\"odinger equation \eqref{isch} with initial datum $\psi_0 \in H^r(\T^d)$ satisfies 
$$
\norm{\psi(t)}_r \leq C_{r,\epsilon} \la t \ra^\epsilon \, \norm{\psi_0}_r . 
$$
\end{theorem}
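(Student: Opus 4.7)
My plan is to combine an iterative normal-form reduction, based on hypotheses (H1)--(H2), with a weighted energy estimate using the norm from (H3), and then to exploit conservation of the $L^2$ norm via interpolation to obtain the $\la t\ra^\epsilon$ growth. The underlying idea is that repeated normal forms make the effective perturbation arbitrarily regularizing, while the norm induced by $\cD$ is insensitive (up to smoothing errors) to the diagonal part of the Hamiltonian.

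\emph{Iterative normal form.} Starting from $H_0 := -\Delta_\sL + V$ with $V \in \cA^0$, I would construct inductively unitary conjugations $U_1, \ldots, U_N$ so that, for $\tilde\psi_k := U_k^*\cdots U_1^*\psi$, the transformed Hamiltonian $\tilde H_k$ governing $\im \partial_t\tilde\psi_k = \tilde H_k\tilde\psi_k$ has the form $\tilde H_k = -\Delta_\sL + D_k + Q_k$ with $D_k \in \cA^0_D$ and $Q_k \in \cA^{-k\rho}$. At step $k+1$, decompose $Q_k = Q_k^D + Q_k^{ND}$ via (H1) and apply (H2) with $W = \im Q_k^{ND}$ to obtain $X_{k+1} \in \cA^{-(k+1)\rho}_{ND}$ such that $[X_{k+1}, -\Delta_\sL] = \im Q_k^{ND} + R_{k+1}$, $R_{k+1} \in \cA^{-\infty}$. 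Taking $X_{k+1}$ self-adjoint (replace by $(X_{k+1}+X_{k+1}^*)/2$, exploiting invariance of $\cA^\sigma_{ND}$ under $Q\mapsto Q^*$), the operator $U_{k+1} := \exp(\im X_{k+1})$ is unitary on $L^2$ and, since $X_{k+1}$ is smoothing, bounded on every $H^s$. A Lie-series / Baker--Campbell--Hausdorff expansion combined with the homological equation gives $\tilde H_{k+1} = -\Delta_\sL + D_{k+1} + Q_{k+1}$ with $D_{k+1} = D_k + Q_k^D$ and the new remainder $Q_{k+1} \in \cA^{-(k+1)\rho}$ assembled from $\partial_t X_{k+1}$, $[X_{k+1}, D_k + Q_k]$, the error $R_{k+1}$, and higher commutators, each lying in an $\cA^\sigma$ class of the required order. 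After $N$ steps, $\tilde H_N = -\Delta_\sL + D_N + R^{(N)}$ with $D_N \in \cA^0_D$ and $R^{(N)} \in \cA^{-N\rho}$.

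\emph{Energy estimate and interpolation.} Set $E_r(t) := \|\cD^{r/2}\tilde\psi_N(t)\|_{L^2}^2$, equivalent to $\|\tilde\psi_N(t)\|_{H^r}^2$ by (H3). Then
\[
\frac{d}{dt} E_r(t) = 2\,{\rm Im}\, \big\langle [\cD^{r/2}, \tilde H_N]\,\tilde\psi_N(t), \cD^{r/2}\tilde\psi_N(t)\big\rangle_{L^2}.
\]
The contribution of $-\Delta_\sL$ vanishes because $\cD$ commutes with $\Delta_\sL$; the contribution of $D_N$ lies in $\cA^{-\infty}$ by (H3), since $[\cD, D_N]\in\cA^{-\infty}$ implies $[\cD^{r/2}, D_N]\in\cA^{-\infty}$ via functional calculus; and the contribution of $R^{(N)}$ gives a commutator in $\cA^{r-N\rho}$. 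Hence
\[
\bigl|\tfrac{d}{dt} E_r(t)\bigr| \leq C_{N,r}\,\|\tilde\psi_N(t)\|_{H^r}\,\|\tilde\psi_N(t)\|_{H^{r-N\rho}}.
\]
Since each $U_k$ is unitary and the Schr\"odinger flow preserves $L^2$, one has $\|\tilde\psi_N(t)\|_{L^2} = \|\psi_0\|_{L^2}$. Choosing $N$ with $N\rho\leq r$ and interpolating with $\theta = N\rho/r \in (0,1)$,
\[
\|\tilde\psi_N\|_{H^{r-N\rho}} \leq \|\psi_0\|_{L^2}^{\theta}\,\|\tilde\psi_N\|_{H^r}^{1-\theta},
\]
so $E_r'(t) \leq C\, E_r(t)^{1-\theta/2}$, and direct integration yields $E_r(t) \leq C\,\la t\ra^{2/\theta} = C\,\la t\ra^{2r/(N\rho)}$. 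Given $\epsilon > 0$, picking $N$ so that $r/(N\rho) < \epsilon$ and transferring the bound back to $\psi$ through boundedness of the $U_k$'s on $H^r$ yields \eqref{growth}.

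The main obstacle I expect is the rigorous execution of the iterative normal form, rather than the final energy estimate: proving each $X_k$ can be chosen self-adjoint so that $U_k = \exp(\im X_k)$ is uniformly bounded on every $H^s$; controlling the time derivatives $\partial_t X_k$ appearing in the transformed Hamiltonian, and verifying that they inherit the order of $X_k$ through the smoothness-in-$t$ requirement of Definition \ref{def:2}; and checking that, at every step, the new remainder still decomposes within $\cA^\sigma_D \oplus \cA^\sigma_{ND}$ so that (H1)--(H2) can be reapplied inductively. This is precisely where Delort's abstract framework \cite{del2} engineers the classes $\cA^\sigma_D, \cA^\sigma_{ND}$ and the operator $\cD$ to make the scheme close; in our setting those classes are ultimately built from the clustering result of Theorem \ref{thm:sep}.
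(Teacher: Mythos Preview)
The paper does not prove this theorem: it is stated as ``Delort's theorem'' and attributed to \cite{del2}, and the paper's contribution is only to verify hypotheses (H1)--(H3) using the clustering result of Theorem~\ref{thm:sep}. So there is no ``paper's own proof'' to compare against. Your sketch is, in outline, the strategy of Delort's original argument (iterated normal form followed by an energy/interpolation estimate), and the part you flag as the ``main obstacle'' --- making the normal form iterate cleanly --- is indeed where the work lies.

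That said, there is a genuine quantitative gap in your interpolation step. You interpolate $\|\tilde\psi_N\|_{H^{r-N\rho}} \leq \|\psi_0\|_{L^2}^{\theta}\|\tilde\psi_N\|_{H^r}^{1-\theta}$ with $\theta = N\rho/r$, which forces $N\rho \leq r$ (otherwise $r-N\rho<0$ and you are outside the interpolation range $[0,r]$). The resulting ODE yields $\|\tilde\psi_N(t)\|_{H^r} \leq C\la t\ra^{r/(N\rho)}$, and this exponent is $\geq 1$ under the constraint $N\rho\leq r$. You then ask for $r/(N\rho)<\epsilon$, i.e.\ $N\rho>r/\epsilon$, which for $\epsilon<1$ is incompatible with $N\rho\leq r$. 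So as written the argument cannot reach any exponent below $1$.

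The standard repair is to interpolate at the level of propagator norms rather than along a single trajectory. From the energy inequality one first deduces, for any $s>0$, the operator bound $\|U(t,\tau)\|_{\cL(H^{s})} \leq C_N(1+|t-\tau|)^{\lceil s/(N\rho)\rceil}$ by iterating the estimate down in steps of $N\rho$ until one hits $L^2$ conservation. Then complex interpolation between $\cL(L^2)$ (where the propagator has norm $1$) and $\cL(H^{kN\rho})$ (where it grows like $\la t\ra^{k}$) gives $\|U(t,\tau)\|_{\cL(H^r)} \leq C\la t\ra^{r/(N\rho)}$ with no upper restriction on $N$. Taking $N$ large enough that $r/(N\rho)<\epsilon$ now gives the claim. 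The key point is that this interpolation uses a Sobolev index $kN\rho$ \emph{above} $r$, so the constraint $N\rho\leq r$ never appears.
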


\paragraph{Verification of the assumptions of Delort theorem.}

Consider  the cluster decomposition $(\Omega_\alpha)_{\alpha \in A}$ provided by Theorem \ref{thm:sep} and 
define {the  projector} 
$$
\wt\Pi_\alpha := \sum_{j \in \Omega_\alpha} \Pi_j \,  , \qquad \forall \alpha \in A \, . 
$$
Following \cite{del2}, 
we denote by $\cA^\sigma_{D}$ the {subspace} of $\cA^\sigma$ given by those operators $Q \in \cA^\sigma$ such that
$\wt\Pi_\alpha Q \wt\Pi_\beta = 0$ for any 
$\alpha, \beta \in A$ with $ \alpha \neq \beta$. 
Similarly, we denote by $\cA^\sigma_{ND}$ the operators satisfying 
$\wt\Pi_\alpha Q \wt\Pi_\alpha = 0 $ for any $ \alpha \in A$.
Clearly such subspaces are invariant under $Q \mapsto Q^*$.

We verify now that (H1)--(H3) are met. 
\\[1mm]
\underline{Verification of (H1).} It is enough to write
$Q = \sum_{\alpha} \wt\Pi_\alpha Q \wt\Pi_\alpha + \sum_{\alpha \neq \beta} \wt\Pi_\alpha Q \wt\Pi_\beta 
\equiv Q_D + Q_{ND}$.
\\[1mm]
\noindent\underline{Verification of (H2).} 
For any $Q \in \cA^\sigma$, denote by $ {Q_j^{j'}} $ its matrix elements  on  the exponential basis 
$ {\mathtt e}_j $ defined in \eqref{expo}, i.e. 
$ {Q_j^{j'}} := \la Q {\mathtt e}_{j'} , \, {\mathtt e}_{j} \ra_{L^2}$. 
{Thus,} 
the homological equation \eqref{hom.eq} reads
$$
{ (\mu_{j'} - \mu_{j}) X_j^{j'}} = {W_j^{j'}} + 
{R_j^{j'}}  \, , \quad j \in \Omega_\alpha, \ j' \in \Omega_\beta \, , \ \alpha \neq \beta \, ,
$$
{and  we define its solution} 
 $$
{X_j^{j'}} := 
\begin{cases}
\displaystyle{\frac{ {W_j^{j'}} }{{\mu_{j'} - \mu_j}}}  & \mbox{ if } |\mu_{j} - \mu_{j'}| \geq \frac{1}{4} (|j| + |j'|)^\delta \\
0  & \mbox{ otherwise}
\end{cases} , 
$$
with
$$
{R_j^{ j'}} := 
\begin{cases}
- {W_j^{j'}}  & \  \  \ \mbox{ if }  |\mu_{j} - \mu_{j'}| < \frac{1}{4} (|j| + |j'|)^\delta\\
0 & \ \ \ \mbox{ otherwise}
\end{cases} . 
$$
{Since $W \in \cA^\sigma_{ND}$ we deduce that}
$X \in \cA^{\sigma- \delta}_{ND}$.  To verify that $R \in \cA^{- \infty}$, recall that Theorem 
\ref{thm:sep}-(2) implies that  
$|j - j'| + |\mu_{j} - \mu_{j'}| \geq  (|j| + |j'|)^\delta$, 
$ {\forall  j \in \Omega_\alpha} $, 
$ { j' \in \Omega_\beta} $, $ { \alpha \neq \beta} $, 
and therefore  $( {R_j^{j'}} )_{j, j' \in \Z^d}$
 is supported on  sites satisfying 
$|j - j'| \geq \frac{1}{2} (|j| + |j'|)^\delta $. As a consequence, recalling Definition \ref{def:2}, 
the operator  $R $ is in $ \cA^{m}$ for any $m \in\R$. 
In conclusion  (H2) holds with $\rho = \delta$.
\\[1mm]
\noindent\underline{Verification of (H3).}
Define the operator
$$ \cD := \sum_{\alpha \in A} M_\alpha^2 \, \wt\Pi_\alpha \,   , \qquad 
M_\alpha:= \max_{j \in \Omega_\alpha}|j| \, ,  
$$
{(the maximum $M_\alpha$  is attained at some $ j(\alpha) $ since $\Omega_\alpha$ is finite)}. 
{Clearly 
 $ \cD $ commutes with $ \Delta_\sL $ 
 and also with any operator $Q \in \cA^\sigma_D$, so \eqref{QD} is trivially satisfied.
By Theorem \ref{thm:sep}-(1) each cluster $ \Omega_\alpha  $ is dyadic, i.e. 
$$
\forall \alpha \in A \, , \  \forall j \in \Omega_\alpha \, , \quad
\mbox{either } \  |j| \leq \tC (\sL, d) \quad \mbox{ or } \  \frac{M_\alpha}{2} \leq |j| \leq  M_\alpha   \, ,
$$
and therefore the operator $ \cD $ is in $ \cA^2 $ and \eqref{eq.norm} holds}.

\subsection{Proof of Theorem \ref{thm:sep}}\label{sec:L}

Denote by 
$\nla{\cdot}:\R^d \to \R$  the  quadratic positive definite  form
\begin{equation}
\label{def:cQ}
\nla{y} := \norm{W y}^2 ,   \quad \forall y  \in \R^d \, , 
\end{equation}
and  by  $\pla{\cdot}{\cdot}:\R^d \times \R^d \to \R$ the associated scalar product 
\begin{equation}
 \label{def:phi}
 \pla{y}{y'} := \la W y,
 W y'  \ra_{\R^d}  \, , \quad \forall  y, y' \in \R^d \, ,
 \end{equation} 
where  $ \la  \cdot, \cdot   \ra_{\R^d} $ 
denotes the standard scalar product of $ \R^d $.
Note that,
for any $j \in \Z^d$,  it results that 
$ \nla{j} = \pla{j}{j} = \mu_j$ are the eigenvalues \eqref{muj} of $ - \Delta_{\sL} $.

 The proof of Theorem \ref{thm:sep} is based on a bound for the length of a 
 $\Gamma$-chain, which we now introduce.
Define the map
\begin{equation}\label{def:Phi}
\Phi: \Z^d \to \Z^{d+1} \, , \qquad \Phi(j) := (j, \nla{j}) = (j, \mu_j ) \, .
\end{equation}

\begin{definition}[$\Gamma$-chain]\label{1Gchain}
 Given $\Gamma \geq 2$, a sequence of distinct 
integer vectors $( j_q)_{q=0, \ldots, L} \subset \Z^d$ is   a 
{\em $\Gamma$-chain} if
$$
\abs{\Phi(j_{q+1}) - \Phi (j_q)} \leq \Gamma \, , \quad \forall 0\leq q \leq L-1 \, . 
$$ 
The number $L$ is called the {\em length} of the chain.
\end{definition}

The next proposition provides  the key bound for  the length of a $\Gamma$-chain.

\begin{proposition}[Length of $\Gamma$-chains] 
\label{prop:lG}
There exist $C_1( d), C_2(\sL, d) >0$ such that any $\Gamma$-chain  
$( j_q)_{q=0, \ldots, L} \subset \Z^d$ has length at most
\begin{equation}
\label{lG}
L \leq  C_2(\sL, d) \  \Gamma^{C_1( d)} \, . 
\end{equation}
\end{proposition}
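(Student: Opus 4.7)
The plan is to argue by induction on the dimension $d$, converting the chain condition into a bilinear constraint and invoking the quantitative nondegeneracy of $(\cdot,\cdot)_\sL$ on integer vectors provided by Lemma \ref{detA}. Setting $y_q := j_q - j_0$, telescoping the chain inequality gives $|y_q|\le q\Gamma$, and the polarization identity $\mu_{j_q} - \mu_{j_0} = \mu_{y_q} + 2(j_0, y_q)_\sL$ turns the second half of the chain condition into
\[
|\mu_{y_q} + 2(j_0, y_q)_\sL| \le q\Gamma \le L\Gamma, \qquad 1 \le q \le L. \qquad (\star)
\]
The base case $d = 1$ follows directly from the factorization $|j_{q+1}^2 - j_q^2| = |j_{q+1} - j_q|\,|j_{q+1} + j_q|$ combined with $|j_{q+1} - j_q|\ge 1$ whenever the entries are distinct.

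For the inductive step, let $E := \mathrm{span}_\R\{y_q\}_{q=1}^L$ with $d' := \dim E \le d$. If $d' < d$, the chain lies inside the affine subspace $j_0 + E$; since $E\cap\Z^d$ is a sublattice of rank $d'$, an integer basis change turns it into a chain in dimension $d'$ with respect to the induced (still positive definite) quadratic form, and the inductive hypothesis applies. If $d' = d$, extract a basis $y_{q_1}, \ldots, y_{q_d}$ of $\R^d$ by choosing $q_i$ as the earliest index at which the span increases; by the inductive hypothesis applied to each sub-chain between $q_{i-1}$ and $q_i$ (which lies in a subspace of dimension $<i$) one gets $q_d \le C(\sL, d)\Gamma^{C_1(d-1)}$, so that $\max_i|y_{q_i}| \le q_d\Gamma$ is polynomial in $\Gamma$ and \emph{independent of $L$}. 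Lemma \ref{detA} then provides $\det G \ge c(\sL,d) > 0$ for the Gram matrix $G := ((y_{q_i}, y_{q_k})_\sL)_{i,k=1}^d$; writing $j_0 = \sum_k a_k y_{q_k}$, the linear system $Ga = b$ with $b_i = (j_0, y_{q_i})_\sL$ bounded via $(\star)$, combined with Cramer's rule, yields
\[
|j_0| \le R_0 := C(\sL, d)\Gamma^{A(d)},
\]
again independently of $L$, for some explicit exponent $A(d)$.

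Once $|j_0|$ is controlled, every chain element satisfies $\mu_{j_q} \in [\mu_{j_0} - L\Gamma, \mu_{j_0} + L\Gamma]$ and $|j_q - j_0| \le L\Gamma$. A careful lattice-point count in this bounded region (a ball estimate when $\mu_{j_0} \lesssim L\Gamma$, a spherical-shell/cap estimate otherwise) produces an implicit inequality on $L$ that upon rearrangement gives $L \le C_2(\sL, d)\Gamma^{C_1(d)}$. The principal obstacle is Lemma \ref{detA} itself — the uniform lower bound $\det G \ge c(\sL, d) > 0$ on the Gram determinant of any linearly independent tuple of integer vectors under the lattice form — together with the bookkeeping of the exponent $C_1(d)$ across the nested induction. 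The positive definiteness of $(\cdot,\cdot)_\sL$ on any flat torus makes this case substantially cleaner than the indefinite-form analyses required later for the quasi-periodic wave and Schr\"odinger operators, where additional Diophantine conditions on $\omega$ must be imposed.
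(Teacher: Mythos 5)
You use Lemma \ref{detA} and Cramer's rule just as the paper does, but the surrounding architecture has two genuine gaps.

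\emph{First gap: the dimensional induction via sublattice change of basis.} When a subchain lies in an affine subspace $j_0 + E$ of dimension $d' < d$, choosing a $\Z$-basis $e_1,\ldots,e_{d'}$ of $E\cap\Z^d$ and writing $j_q = j_0 + \sum_i m_q^{(i)} e_i$ does two bad things. (a) The condition $|j_{q+1}-j_q|\le\Gamma$ only gives $|m_{q+1}-m_q|\le \Gamma/\sigma_{\rm min}(E)$, where $\sigma_{\rm min}(E)$ is a smallest singular value of the (chain-dependent) basis matrix, so the $\Gamma$ in the inductive hypothesis is not the same $\Gamma$ and the constant $C_2(\cdot,d')$ refers to a sublattice which varies with the chain. (b) More seriously, $\mu\big(j_0+\sum_i m_i e_i\big)$ is a quadratic polynomial in $m$ with a linear term $2\pla{j_0}{\sum_i m_i e_i}$, hence is \emph{not} of the form $\|W'm\|^2$ for any lattice; your inductive hypothesis, as stated, simply does not apply to the reduced problem. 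The paper sidesteps both issues by never changing coordinates: Lemma \ref{detA}(ii) is uniform over all linearly independent tuples of integer vectors in $\Z^d$ for every $g\le d$, so one always works with the orthogonal projection $P_\sub$ onto $\sub$ inside the ambient $\R^d$, whatever $\dim\sub$ happens to be, and the ``Case 2'' iteration re-analyzes the subchain in the same space.

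\emph{Second gap: the final lattice-point count.} Bounding $|j_0|\le R_0$ does not bound the diameter of the chain, only $|j_q|\le R_0+L\Gamma$, which still grows with $L$. In the regime $\mu_{j_0}\lesssim L\Gamma$ the shell constraint gives $|j_q|\lesssim\sqrt{L\Gamma}$ and hence $L\lesssim(L\Gamma)^{d/2}$, which is vacuous for $d\ge 2$; the cap estimate in the complementary regime is no better. What closes the argument in the paper is that it bounds $P_\sub j_{q_0}$ \emph{for every} $q_0$, and since $j_{q_1}-j_{q_2}=P_\sub j_{q_1}-P_\sub j_{q_2}$ this bounds the \emph{diameter} of the chain by $C(\sL,d)(L^\varsigma\Gamma)^{2d+1}$; the fractional power $L^\varsigma$ (basis vectors extracted only from indices $|q-q_0|\le L^\varsigma$) is exactly what makes the resulting inequality $L\le C(L^\varsigma\Gamma)^{(2d+1)d}$ self-improving once $\varsigma(2d+1)d\le 1/2$. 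Your $q_d$-bound could in principle replace $L^\varsigma$, but then you would have to produce, for every base point $q_0$ simultaneously, a $q_0$-independent analogue of $q_d$; the construction ``earliest index at which the span increases'' is tied to $q_0=0$ and does not supply this.
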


The proof of Proposition  \ref{thm:sep} is based on an inductive argument on the dimension of the subspace 
$ G \subset  \R^d $ generated by the vectors $ j_q - j_{q'} $, for all $ q, q' \in \{0, \ldots, L \} $, see \eqref{def:Gs}. 
First of all, by the Definition \ref{1Gchain} of a $ \Gamma $-chain we derive the following 
bounds for the scalar product   of each vector $ j_{q_0} $
with the elements of the subspace $ G $. 

\begin{lemma}
For all $ q, q_0 \in \{0, \ldots, L\}$ we have 
\begin{equation}
\label{G.prop}
\abs{\Phi(j_q) - \Phi(j_{q_0})} \leq \abs{q-q_0} \Gamma \, , \qquad
\abs{\pla{j_{q_0}}{j_q - j_{q_0}}} \leq C(\sL, d) |q-q_0|^2 \Gamma^2 \, . 
\end{equation}
\end{lemma}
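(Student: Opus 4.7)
The first inequality is a direct telescoping argument. Since $(j_q)_{q=0,\ldots,L}$ is a $\Gamma$-chain, Definition \ref{1Gchain} gives $|\Phi(j_{k+1})-\Phi(j_k)|\le\Gamma$ for each $k$, so by the triangle inequality applied to $\Phi(j_q)-\Phi(j_{q_0})=\sum_{k=\min(q,q_0)}^{\max(q,q_0)-1}\bigl(\Phi(j_{k+1})-\Phi(j_k)\bigr)$ we obtain $|\Phi(j_q)-\Phi(j_{q_0})|\le |q-q_0|\,\Gamma$. In particular, reading off the first $d$ coordinates of $\Phi$ defined in \eqref{def:Phi} yields
\begin{equation}\label{planjbound}
|j_q-j_{q_0}|\le |q-q_0|\,\Gamma,
\end{equation}
while reading off the last coordinate gives $|\mu_{j_q}-\mu_{j_{q_0}}|\le |q-q_0|\,\Gamma$.

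For the second inequality, the plan is to expand $\mu_{j_q}-\mu_{j_{q_0}}=\nla{j_q}-\nla{j_{q_0}}$ using the polarization identity associated to the bilinear form $\pla{\cdot}{\cdot}$ of \eqref{def:phi}. Writing $j_q=j_{q_0}+(j_q-j_{q_0})$ and using bilinearity,
\begin{equation}\label{planpolar}
\nla{j_q}-\nla{j_{q_0}}=2\,\pla{j_{q_0}}{j_q-j_{q_0}}+\nla{j_q-j_{q_0}}.
\end{equation}
Hence
\[
2\,|\pla{j_{q_0}}{j_q-j_{q_0}}|\le |\mu_{j_q}-\mu_{j_{q_0}}|+\nla{j_q-j_{q_0}}.
\]

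To conclude, the first summand is bounded by $|q-q_0|\,\Gamma$ by the first part of the lemma, and the second by $\|W\|_{\mathrm{op}}^2\,|j_q-j_{q_0}|^2\le \|W\|_{\mathrm{op}}^2\,|q-q_0|^2\,\Gamma^2$ using \eqref{def:cQ} and \eqref{planjbound}. Since $\Gamma\ge 2$ and the inequality is trivial when $q=q_0$, we may absorb the linear term into the quadratic one, obtaining a bound of the form $C(\sL,d)\,|q-q_0|^2\,\Gamma^2$ with $C(\sL,d)$ depending only on $\|W\|_{\mathrm{op}}$, i.e.\ on the lattice $\sL$ and the dimension $d$. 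There is no real obstacle here; the only minor point to be careful about is keeping track of the lattice-dependent operator norm of $W$, which is precisely what the constant $C(\sL,d)$ encodes.
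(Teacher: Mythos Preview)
Your proof is correct and follows essentially the same approach as the paper: the first inequality by telescoping the $\Gamma$-chain condition, the second by the polarization identity $\nla{j_q}-\nla{j_{q_0}}=2\pla{j_{q_0}}{j_q-j_{q_0}}+\nla{j_q-j_{q_0}}$ together with the first bound and $\nla{y}\le C(\sL,d)|y|^2$. Your explicit handling of the telescoping sum and the absorption of the linear term via $\Gamma\ge 2$ are minor expository additions, not a different argument.
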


\begin{proof}
The first bound in \eqref{G.prop} directly follows by the definition of $ \Gamma $-chain.
By bilinearity, $\nla{j_q} =  \nla{j_{q_0}}  +2 \pla{j_{q_0}}{j_q - j_{q_0}} +  \nla{j_q - j_{q_0}}$, and thus
\begin{align*}
\Phi(j_q) - \Phi(j_{q_0}) 
& =   \Big(j_q - j_{q_0}, \  2 \pla{j_{q_0}}{ j_q - j_{q_0}} + \nla{j_q - j_{q_0}}  \Big) \, .
\end{align*}
The first bound in \eqref{G.prop} and the estimate $\nla{y} \leq  C(\sL, d) |y|^2$, imply 
that, for all $  q, q_0 \in \{0, \ldots, L\}$, 
$$
\abs{\pla{j_{q_0}}{ j_q - j_{q_0}}} \leq |q-q_0| \Gamma +  C(\sL, d) |q-q_0|^2 \Gamma^2 \leq C(\sL, d) |q-q_0|^2 \Gamma^2 \, , 
$$
which is the second estimate in \eqref{G.prop}.
\end{proof}

Now we introduce the subspace of $ \R^d $:
\begin{equation}\label{def:Gs}
\begin{aligned}
\sub &:= {\rm span }_\R \la j_q - j_{q'} \, , \ \  0 \leq q, q' \leq L \ra \\
& \ =  {\rm span }_\R \la j_q - j_{q_0} \, , \ \  q = 0, \ldots, L \ra \, , \ \forall q_0 = 0, \ldots, L \, . 
\end{aligned}
\end{equation}
Let $ g := \dim \sub $; 
clearly $ 1 \leq g \leq d$ (note that $g\geq 1 $ because  the vectors $j_q$ are all distinct).
We consider the orthogonal decomposition  of $ \R^d $ with respect to the scalar product $\pla{\cdot}{\cdot}$, 
\begin{equation}\label{GGbot}
\R^d = \sub \oplus \sub^{\bot \pla{\cdot}{\cdot}} \, ,
\end{equation}
and we denote by $P_\sub $  the orthogonal projector  on the subspace $ \sub $. 

The next step is to derive by \eqref{G.prop} a bound on the norm of  $P_\sub j_{q_0}$ for each 
integer vector $ j_{q_0} $, $ q_0 = 0, \ldots, L $, of the $ \Gamma $-chain.  
We consider two cases.
\\[1mm]
{\em Case 1.} For any  $q_0 \in \{0, \ldots, L\}$ it results that 
$ {\rm span }_\R \la j_q - j_{q_0} \, ,  \ \ \abs{q-q_0} \leq L^\varsigma \ra = \sub $ where $ \varsigma  := [3(2d+1)d]^{-1} $.
We select a basis of $ \sub $ from $j_q-j_{q_0}$ with $|q-q_0|\leq L^\varsigma $, say
\begin{equation} \label{def:fi}
f_i := j_{q_i} - j_{q_0}  \  \ 
\mbox{ with } \  \  |q_i-q_0|\leq L^\varsigma ,  \quad 
 \forall 1 \leq i \leq g \, .
\end{equation}
 By the first bound in \eqref{G.prop} and recalling \eqref{def:Phi} we obtain the estimate
\begin{equation}
\label{est.fj}
\abs{f_i} \leq |q_i-q_0| \Gamma \leq L^\varsigma \Gamma \, , \qquad \forall 1 \leq i \leq g \, .
\end{equation}
To obtain a bound on the norm of $P_\sub j_{q_0}$, we
decompose $P_\sub j_{q_0}$ on the basis $(f_i)_{1 \leq i \leq g}$, 
\begin{equation}\label{PGjq0}
P_\sub j_{q_0} = \sum_{i=1}^g x_i f_i \  ,
\end{equation}
and we look for an upper  
bound for the coordinates 
$x := (x_i)_{1 \leq i \leq g}$. The $ x $ 
are determined 
by solving the linear system  $ A x = \bb $, where 
\begin{equation}
\label{l.eqA}
 A := \big(\pla{f_i}{f_l}\big)_{1 \leq i, l \leq g} 
\end{equation}
is the $ g \times g $-matrix of the scalar products of the basis $ (f_i)_{i=1, \ldots, g} $ of $ \sub $ (Gram matrix) and
\begin{equation}
\label{l.eq}
\bb := (\bb_l)_{ l=1, \ldots, g}  \in \R^g \, , \quad 
\bb_l := \pla{P_\sub j_{q_0}}{ f_l } =  \pla{j_{q_0}}{ f_l}  \, .
\end{equation}
By the second estimate in \eqref{G.prop} and \eqref{est.fj} the coefficients $\bb_l$ are estimated  by    
\begin{equation}
\label{normb}
|\bb_l|  \leq  c(\sL, d)   |q_l-q_0|^2 \Gamma^2
\leq  c(\sL,d) (L^{\varsigma} \Gamma)^2 \, , \quad \forall l=1, \ldots, g \, .
\end{equation}
Since the vectors $ (f_i )_{i=1, \ldots, g} $ are independent, 
the Gram  matrix $A$ is invertible. 
We prove in Lemma \ref{detA} below 
that  that its determinant is bounded away from zero, uniformly 
with respect to the basis $ (f_i)_{i=1, \ldots, g} $ of integer  vectors. 
We introduce some notation.

\begin{itemize}
\item 
For any  $ g \in \{1, \ldots, d \} $ we denote by 
$ \fC_g(W)$ 
the $g-th$ {\it compound} matrix of the $ d\times d $ matrix $W$, 
defined as the matrix whose entries are the determinants of all possible  $g\times g$ minors of $W$, see e.g. \cite{gantmacher}, Chap. $1$ ${\mathsection 4} $. Thus
$ \fC_g(W)$ is a $ \binom{d}{g} \times \binom{d}{g}  $ square matrix. 
The important result is that, for any $  g $, 
the compound matrix of an invertible matrix $ W$ is invertible as well (see e.g. 
\cite{gantmacher}, Chap. 1 {$\mathsection 4$}) and 
\begin{equation}\label{inv-comp}
\fC_g(W)^{-1} = \fC_g(W^{-1}) \, . 
\end{equation}
\item
If $A$ is a $m \times n$ matrix, and $1 \leq g \leq m$, we denote by 
  $A_{a_1  \ldots a_g}$ the $g\times n$ matrix of rows $a_1,\ldots, a_g$ of $A$. 
  If $1 \leq g \leq n$ then 
 $A^{b_1  \ldots b_g}$ is the $m \times g$ matrix of columns 
$ b_1,\ldots, b_g$ of $A$. Finally if $1 \leq g \leq \min(m,n)$, we denote by $A_{a_1 \ldots a_g}^{b_1  \ldots b_g}$ the $g \times g$ matrix of rows $a_1,\ldots, a_g$ and columns $ b_1,\ldots, b_g$ of $A$.
\end{itemize}

 \begin{lemma}
 \label{detA}
Let  $A$ be the Gram matrix  defined in \eqref{l.eqA}. Then 
\begin{itemize}
\item[(i)] There exists $p \in \Z^{\binom{d}{g}}\setminus \{0\}$ such that  
\begin{equation}
\label{detA1}
\det A = \norm{\fC_g(W) p}^2  
\end{equation}
where  $\fC_g(W)$ is the $g-th$ compound matrix of $W$ and 
$\norm{\cdot}$ is the euclidean norm.
\item[(ii)] 
There exists a constant $\tc(\sL) >0$ such that, for any 
 linearly independent integer vectors  $ (f_i)_{i=1, \ldots, g} $,  
\begin{equation}
\label{deA.e}
\det A \geq \tc(\sL) >  0\,  .
\end{equation}
\end{itemize}
\end{lemma}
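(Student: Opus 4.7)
The plan is to unfold the Gram structure of $A$ using the Cauchy--Binet formula, thereby expressing $\det A$ directly in terms of the compound matrix $\fC_g(W)$ acting on an integer vector formed by the $g\times g$ minors of the matrix whose columns are $f_1,\ldots,f_g$. Once this representation is in hand, part (ii) follows from the invertibility of $\fC_g(W)$ combined with the fact that nonzero integer vectors have norm at least one.

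Concretely, let $F=(f_1\mid\cdots\mid f_g)$ be the $d\times g$ integer matrix whose columns are the given vectors, so that by \eqref{def:phi}
\[
A \;=\; F^{\!\top} W^{\!\top} W F \;=\; (WF)^{\!\top}(WF).
\]
Since $WF$ is $d\times g$, the Cauchy--Binet formula yields
\[
\det A \;=\; \sum_{\substack{S\subset\{1,\ldots,d\}\\ |S|=g}} \bigl(\det (WF)_S\bigr)^{2},
\]
where $(WF)_S$ denotes the $g\times g$ submatrix of rows indexed by $S$. A second application of Cauchy--Binet to $(WF)_S = W_S\, F$ gives
\[
\det (WF)_S \;=\; \sum_{\substack{T\subset\{1,\ldots,d\}\\|T|=g}} \det\bigl(W_S^{\,T}\bigr)\,\det\bigl(F_T\bigr)
\;=\; \bigl(\fC_g(W)\,p\bigr)_S,
\]
after defining $p\in\Z^{\binom{d}{g}}$ by $p_T:=\det(F_T)$. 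Since the $f_i$ are linearly independent integer vectors, at least one maximal minor of $F$ is nonzero, so $p\in\Z^{\binom{d}{g}}\setminus\{0\}$. Summing over $S$ proves \eqref{detA1}.

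For part (ii), I would use \eqref{inv-comp}: because $W$ is invertible, $\fC_g(W)$ is invertible with inverse $\fC_g(W^{-1})$, so
\[
\norm{\fC_g(W)\,p}\;\geq\;\frac{\norm{p}}{\norm{\fC_g(W^{-1})}}\;\geq\;\frac{1}{\norm{\fC_g(W^{-1})}},
\]
where the last inequality uses $p\in\Z^{\binom{d}{g}}\setminus\{0\}$, hence $\norm{p}\geq 1$. Taking the minimum of these bounds over $g\in\{1,\ldots,d\}$ gives a constant $\tc(\sL)>0$ depending only on the lattice. The merit of this approach is that the bound is completely independent of the particular choice of integer vectors $f_1,\dots,f_g$, which is exactly what is needed when this lemma is invoked in \eqref{normb} for chains of arbitrary length.

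There is no real obstacle here beyond bookkeeping: the only subtlety is recognizing that the double Cauchy--Binet expansion produces precisely the compound matrix $\fC_g(W)$, and that integrality of $F$ forces $p$ into $\Z^{\binom{d}{g}}$ so that the trivial lower bound $\norm{p}\geq 1$ can be used.
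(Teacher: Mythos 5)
Your proposal is correct and follows essentially the same route as the paper: factor $A=(WF)^{\top}(WF)$, apply Cauchy--Binet twice to obtain $\det A=\|\fC_g(W)p\|^2$ with $p$ the vector of maximal minors of $F$, and then use invertibility of $\fC_g(W)$ together with $\|p\|\geq 1$. The one small variation is that you deduce $p\neq 0$ directly from the full rank of $F$, whereas the paper first notes $\det A\neq 0$ (Gram matrix of independent vectors) and then invokes invertibility of $\fC_g(W)$; both arguments are valid and the rest of the reasoning is identical.
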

\begin{proof}
Recalling \eqref{def:phi} we write the Gram matrix $ A $ in \eqref{defA} as
\begin{equation}\label{WF}
A = \left(W  \, F \right)^\top  \left(W \, F \right) \, , 
\quad  F := \begin{pmatrix}  f_1 \, | \cdots | \, f_g \end{pmatrix}  \, . 
\end{equation}
Then applying twice Cauchy-Binet formula\footnote{
If $M$ is a $g \times d$ matrix, $N$ a $d \times g$ one, then
$$
\det(MN) = \sum_{1 \leq i_1 <  \cdots < i_g \leq d}
\det(A^{ i_1\cdots i_g}) \, \det(B_{ i_1\cdots i_g}) \, .
$$
}
we obtain
\begin{align}
\det A  & = 
\sum_{1 \leq a_1 <  \cdots < a_g \leq d} \Big(\det([W F]_{a_1  \ldots a_g}) \Big)^2 \nonumber 
\\ 
& = \sum_{1 \leq a_1 <  \cdots < a_g \leq d} 
 \left(
  \sum_{ 1 \leq b_1 < \ldots < b_g \leq d} \!\!
  \det(W_{a_1  \ldots a_g}^{b_1 \ldots b_g}) p_{b_1 \ldots b_g}
\right)^2 \label{detA.exp}
\end{align}
where $  p_{b_1 \ldots b_g}:=  \det(F_{b_1  \ldots b_g})  $ are integers  
because  the matrix $ F $ has integer entries. 
The expression 
\eqref{detA.exp} is \eqref{detA1}
 with $ p := (p_{b_1\ldots b_g}) \in \Z^{\binom{d}{g}}$. 
Since the Gram matrix $ A $ of the linearly independent vectors $ (f_i)_{i=1, \ldots, g} $
is invertible, $\det A \neq 0$, and, by \eqref{detA1},
the invertibility of   $\fC_g(W)$ implies that  $p\neq 0$. 
Item ($ii$) follows by item ($i$) and the invertibility  of   $\fC_g(W)$, see \eqref{inv-comp}. 
\end{proof}

As $\pla{f_i}{f_l} \leq c(\sL,d) |f_i| |f_l| \leq c(\sL, d) (L^\varsigma \Gamma)^2$, using 
\eqref{deA.e}, \eqref{est.fj}, 
\eqref{normb},  we estimate $x = A^{-1} \bb$ by Cramer rule, obtaining
\begin{equation}
\label{normx}
|x_i| \leq {c(\sL, d)}\, \left(L^\varsigma \,\Gamma\right)^{2d} , \quad \forall 1 \leq i \leq g \, ,
\end{equation}
and, by \eqref{PGjq0}, \eqref{normx}, 
\eqref{est.fj},   we deduce 
$$
|P_\sub j_{q_0}| \leq C(\sL,d) (L^\varsigma \Gamma)^{2d+1 } \, , \quad \forall q_0 \in \{0,\ldots, L\} \, .
$$ 
In particular,  for all $ q_1, q_2 \in \{0,\ldots, L\}$, we obtain 
\begin{align*}
|j_{q_1} - j_{q_2}| = 
|P_\sub j_{q_1} - P_\sub j_{q_2}| \leq 
| P_\sub j_{q_1}| +  |P_\sub j_{q_2}| & \leq 
2  C(\sL,d) (L^\varsigma \Gamma)^{2d+1} .
\end{align*}
This estimate, and the fact that   $j_q $, $  q = 0, \ldots, L $,  are distinct integers of $ \Z^d $, 
 imply that their number does not exceed
$
 C'(\sL,d) (L^\varsigma \Gamma)^{(2d+1)d}  $
and therefore
$$
L \leq C(\sL,d) (L^\varsigma \Gamma)^{(2d+1)d} \, .
$$
Choosing now $\varsigma$ so small that  $ \varsigma  (2d+1) d \leq 1 /2  $, we obtain \eqref{lG} with $ C_1 (d) = 2 (2d+1) d $.
\\[1mm]
{\em Case 2.}  $\exists q_0 \in \{0, \ldots, L \}$ for which
$$
\sub_1 := {\rm span }_\R \la j_q - j_{q_0} \, ,  \ \ \abs{q-q_0} \leq L^\varsigma \ra \subsetneq \sub \, , 
\quad g_1 := \dim \sub_1  \leq g-1 \, ,
$$
so all vectors $j_q$ with $|q-q_0| \leq L^\varsigma$ belong to an affine subspace  of dimension $1 \leq g_1 \leq g-1$.
Consider then the subchain $\{j_q \colon |q-{q_0}| \leq L^\varsigma \}$, which has a length 
$L_1 \sim L^\varsigma$. If for any index $q_1$  of this subchain it results that  $ {\rm span }_\R \la j_q - j_{q_1} \, ,  \ \ \abs{q- q_1} \leq L_1^\varsigma \ra = \sub_1$, then we can repeat the argument of Case 1 and obtain a bound on $L_1$ (and hence on $L$).
Applying this procedure at most {$d$} times,
 we obtain a bound of the form \eqref{lG}, proving Proposition \ref{prop:lG}.  $\hfill \Box$

\subsection{Conclusion} 

We finally conclude the proof of  Theorem \ref{thm:sep}.
Fix $\delta_0(d) :=   \frac{1}{2C_1(d) +2}$ with $C_1(d)$ given by  Proposition \ref{prop:lG}, take an arbitrary $\delta \in (0, \delta_0(d))$ and  introduce the equivalence relation on $\Z^d$ defined by
\begin{equation}
\label{rel}
j \sim j'   \Leftrightarrow   
\begin{array}{l}
\mbox{either } j = j',  \\
 \mbox{or } 
 \exists  L \in \N  \mbox{ and distinct integer vectors } (j_q)_{q= 0, \ldots, L}\subset \Z^d 
\mbox{ with } j_0 = j \,, 
\\ 
\quad \, j_L = j'  \mbox{ and   } \abs{\Phi(j_{q+1}) - \Phi(j_q)} \leq \left(|j_q| + |j_{q+1}| \right)^\delta,  \  \forall q = 0, \ldots, L-1 \, . 
\end{array}
\end{equation}
This  equivalence relation provides 
a partition of $\Z^d$ in classes of equivalence $(\Omega_\alpha)_{\alpha \in A}$, with  $\Z^d = \cup_{\alpha \in A} \Omega_\alpha$. We claim that such clusters  $ \Omega_\alpha  $ fulfill \eqref{cl1}--\eqref{cl2}.
We split the proof in several steps.
\\[1mm]
\underline{ Each cluster $\Omega_\alpha $ is bounded.} 
By contradiction suppose that $\Omega_\alpha $ is unbounded, i.e. it contains integer vectors of arbitrary large modulus.
For any  $j, j' \in \Omega_\alpha$, 
 with $|j'|$ very large (specified later), 
there exists a sequence $(j_q)_{q=0,\ldots, M}$ satisfying \eqref{rel}. Without loss of generality 
 we  assume that $|j_q| \leq |j'|$, for all $ q$ (otherwise we just replace $j'$ with the $j_{q}$ having  maximum modulus and consider the subchain connecting $j$ and $j_q$). Since 
$$
\abs{\Phi(j_{q+1}) - \Phi(j_q)} \leq \left(|j_q| + |j_{q+1}| \right)^\delta \leq 
(2 |j'|)^\delta \, , \qquad \forall 0 \leq q \leq M-1 \, ,
$$
it follows that $(j_q)_{q=0,\ldots, M}$ is a $(2 |j'|)^\delta$-chain according to Definition
\ref{1Gchain} and, therefore, Proposition \ref{prop:lG} implies that  its length $M$ is bounded by
$ C_2(\sL,d) \left(2 |j'|\right)^{C_1(d)\delta}$. 
Then 
$$
\abs{j' - j} \leq \sum_{q=0}^{M-1} \abs{\Phi(j_{q+1}) - \Phi(j_q)} \leq M  \left(2 |j'|\right)^{\delta} \leq C_2(\sL,d) \left(2 |j'|\right)^{(C_1(d)+1)\delta} , 
$$
which gives
\begin{equation}
\label{jm.0}
\abs{j'} \leq \abs{j} + C_2(\sL,d) \left(2 |j'|\right)^{(C_1(d)+1)\delta} \, .
\end{equation}
As  $\delta(C_1(d)+1) < \delta_0(C_1(d)+1) \leq 1/2$, inequality \eqref{jm.0} bounds uniformly $|j'|$ in terms of $|j|$ and $C_2(\sL, d)$,  giving a contradiction since $j'$ can be chosen to have arbitrary large modulus.
\\[1mm]
\underline{ Dyadicity  of each cluster $ \Omega_\alpha $.} Denote by 
$$
m_\alpha := \min_{j \in \Omega_\alpha} |j| \, , 
\qquad
M_\alpha := \max_{j \in \Omega_\alpha} |j| \, .
$$
Let $j_\alpha$ be the index which realizes the maximum $M_\alpha = |j_\alpha|$ (it exists since each cluster is  bounded).
For any $ j , j' \in \Omega_\alpha $, let  $(j_q)_{q = 0, \ldots, L} $ be a sequence of distinct integer vectors 
in $ \Omega_\alpha $ with $ j_0 = j $, $ j_L = j' $  satisfying \eqref{rel}. Then 
$(j_q)_{q = 0, \ldots, L} $ is a $ (2 M_\alpha)^\delta $-chain according to Definition \ref{1Gchain} and,
by Proposition \ref{lG},   
\begin{equation} \label{Phijj'}
 |\Phi(j) - \Phi(j')| 
 \leq L (2 M_\alpha)^\delta 
 \leq  C_2(\sL,d)\left(2 M_\alpha \right)^{(C_1(d)+1)\delta} \, .
\end{equation}
Recalling \eqref{def:Phi}, we deduce, in particular, that
$ {\rm diam}(\Omega_\alpha) \leq C_2(\sL,d) \left(2 M_\alpha\right)^{(C_1(d)+1)\delta} $. Moreover,
since  $ \delta \leq \delta_0 $ with $ \delta_0 (C_1 (d) + 1 ) = \frac12 $,  we derive that, 
if  $ M_\alpha \geq 8 ( C_2(\sL,d))^2 =: \tC(\sL, d) $ 
then 
  \begin{equation*}
m_\alpha 
\geq 
M_\alpha -  C_2(\sL,d)\left(2 M_\alpha\right)^{(C_1(d)+1)\delta} \geq 
\frac{M_\alpha}{2} \, . 
\end{equation*}
Thus,  either $\max_{j \in \Omega_\alpha} |j| \leq \tC(\sL, d)  $  or  $\max_{j \in \Omega_\alpha} |j| \leq 2 \min_{j \in \Omega_\alpha} |j|$.
\\[1mm]
\underline{Separation properties.} 
Let $j_1, j_2 \in \Omega_\alpha$.
Consider first the case $M_\alpha \geq \tC(\sL, d) $. Then
\begin{align*}
M_\alpha \leq 2 m_\alpha \leq |j_1| + |j_2| \leq 2 M_\alpha , \quad \forall j_1, j_2 \in \Omega_\alpha \, ,
\end{align*}
which together with  \eqref{Phijj'}  imply
$$
\abs{j_1 - j_2} + \abs{ \mu_{j_1} - \mu_{j_2}} \leq 2
|\Phi(j_1) - \Phi(j_2)| \leq 
C(\sL,d) \left( |j_1| + |j_2|\right)^{(C_1(d) + 1)\delta} .
$$
If $M_\alpha \leq \tC (\sL, d) $ and $|j_1|+|j_2| \geq 1$, then 
$$
\abs{j_1 - j_2} + \abs{ \mu_{j_1} - \mu_{j_2}} \leq 2
|\Phi(j_1) - \Phi(j_2)| \leq 
C'(\sL,d) \leq C'(\sL, d) \left( |j_1| + |j_2|\right)^{(C_1(d) + 1)\delta} \, .
$$
If  $|j_1|+|j_2| < 1$, then the integers 
$j_1 = j_2 = 0$  
and \eqref{cl1} holds as well.
The proof of Theorem \ref{thm:sep}-(1) is complete.
Concerning  item $(2)$,  
 note that if $j_1 \in \Omega_\alpha$, $j_2 \in \Omega_\beta$ and $\alpha \neq \beta$, then 
 $ j_1, j_2 $ are not in the same equivalence class and therefore 
$$
\abs{j_1 - j_2} + \abs{ \mu_{j_1} - \mu_{j_2}} \geq 
|\Phi (j_1) - \Phi (j_2)| >
 \left( |j_1| + |j_2|\right)^\delta \, .
$$
The proof of Theorem \ref{thm:sep} is complete.

\section{Quasi-periodic solutions of wave equation}

As we already mentioned, we construct quasi-periodic solutions of  the nonlinear wave equation in 
\eqref{NLWa} by applying Berti-Corsi-Procesi abstract Nash-Moser theorem in \cite{BCP}, which we recall in  Appendix \ref{app:BCP}.

First define the nonlinear map
\begin{equation}\label{F:NLW}
\begin{aligned}
F(\epsilon, \lambda, \cdot) \colon H^{s+2}(\T^n \times \T^d, \R) 
& \to  H^{s}(\T^n \times \T^d, \R) \\
u & \mapsto D(\lambda)u - \e f(\varphi ,x,u)  \, , 
\end{aligned}
\end{equation}
 where   $D(\lambda)\colon  H^{s+2}(\T^n \times \T^d, \R) \to H^s(\T^n \times \T^d,\R)$ is the differential operator
\begin{equation}
\label{DW}
D(\lambda) := (\lambda \bar \omega\cdot \partial_\vf)^2 - \Delta_\sL + m \, .
\end{equation}
The set $\fK$ in \eqref{frakK} is then $\fK = \Z^n \times \Z^d \times \{1\}$, 
{that is $  \fA =\{1\} $}, and the scale of Hilbert spaces \eqref{spazi} 
are the Sobolev spaces   $H^{s}(\T^d \times \T^n, \C )$ written in Fourier variables.

\smallskip

As in the previous application, the main difficulty is to verify a property of separation of chains of singular sites of the operator $D(\lambda)$. To state it we recall some definitions from Appendix \ref{app:BCP} (see Definitions \ref{gammachainabs}--\ref{def:sigmaKn}).

Given $\Gamma \geq 2$,  a sequence of distinct integer vectors
$(\ell_q,  j_q)_{q=0,\ldots,  L} \subset \Z^n\times\Z^d$ is a  
{\em $\Gamma$-chain} if
$$
\max\{ \abs{\ell_{q+1} - \ell_q}, \,  
\abs{j_{q+1} - j_q}\} \leq \Gamma \, , \quad \forall 0\leq q \leq L-1 \, ;
$$ 
the number $L$ is called the {\em length} of the chain (see Definition \ref{gammachainabs}).

The operator  $D(\lambda)$ in \eqref{DW} is represented,  
in the basis of exponentials  $e^{\im (\ell \cdot \vf + j \cdot x)}$, by 
the infinite dimensional diagonal matrix
$$
{  {\rm diag}_{(\ell, j) \in  \Z^n \times  \Z^d} \big( D_{\ell,j} (\lambda) \big)}  \, , \quad
 D_{\ell,j}(\lambda) := - (\lambda \bar \omega \cdot \ell)^2 + \mu_j + m \, , 
$$
where $\mu_j$ are the eigenvalues of $ - \Delta_\sL $ defined in \eqref{muj}.
For $\theta \in \R$, we also define 
\begin{equation}
\label{DW2t}
D(\lambda, \theta) := {\rm diag}_{ (\ell, j) \in  \Z^n \times  \Z^d} \big( D_{\ell, j}(\lambda, \theta) \big) \, , \qquad
D_{\ell, j}(\lambda, \theta) := - (\lambda \bar \omega \cdot \ell + \theta)^2 + \mu_j + m \, .
\end{equation}
A  site $(\ell, j)\in \Z^n \times \Z^d$ is {\em singular} for $ D(\lambda, \theta)  $
if $\abs{D_{\ell,j}(\lambda, \theta)} < 1$ (see Definition  \ref{def:ssn}).

For any $\Sigma \subseteq \Z^n \times \Z^d$ and $\wt \jmath  \in \Z^d $, we denote
by $\Sigma^{\wt \jmath} $ the section of $\Sigma$ at fixed $\wt \jmath $, namely
$$
\Sigma^{\wt \jmath} := \{ (\ell,\wt \jmath) \in \Sigma \} \, . 
$$
Given  $K > 1$,   denote by $\Sigma_K$ any subset of singular sites of $D(\lambda, \theta)$ such that the cardinality of the section $\Sigma^{\wt \jmath}$ satisfies $\sharp \Sigma^{\wt \jmath}_K \leq K$,  for any $\wt \jmath \in \Sigma$ (see Definition \ref{def:sigmaKn}).

The key result is  a bound on the length of $\Gamma$-chains of singular sites of $D(\lambda, \theta)$.

\begin{proposition}[Separation of singular sites for NLW]
\label{thm:lcw}
There exist a constant $C(\sL,d, n, \tau_0)>0$ and, for any $N_0 \geq 2$, a set $\wt\Lambda = \wt\Lambda(N_0)$ such that for all 
$\lambda \in \wt\Lambda$, $\theta \in \R$ and for all $K>1, \Gamma\geq 2$ with $\Gamma K \geq N_0$, any $\Gamma$-chain of singular sites of $D(\lambda, \theta)$ 
in $\Sigma_K$ has length $L \leq (K \Gamma )^{C(\sL, d, n, \tau_0)}$.
\end{proposition}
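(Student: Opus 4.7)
The plan is to adapt the strategy of Proposition~\ref{prop:lG} from the Laplacian setting to the wave operator. I first define
\[
\Phi_\lambda : \Z^n \times \Z^d \to \R^{n+d+1}, \qquad
\Phi_\lambda(\ell,j) := \big(\ell,\; j,\; -(\lambda\bar\omega\cdot\ell+\theta)^2 + \mu_j\big),
\]
and observe that, along a $\Gamma$-chain of singular sites of $D(\lambda,\theta)$, consecutive differences of $\Phi_\lambda$ are bounded by $O(\Gamma)$ in the first $n+d$ entries (by the chain definition) and by $2$ in the last entry (since each $|D_{\ell_q,j_q}(\lambda,\theta)|<1$, so the difference equals $D_{\ell_{q+1},j_{q+1}} - D_{\ell_q,j_q}$). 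A telescoping argument then yields $|\Phi_\lambda(\ell_q,j_q)-\Phi_\lambda(\ell_{q_0},j_{q_0})| \leq C\,|q-q_0|\,\Gamma$, the exact analog of the first bound in \eqref{G.prop}.

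Next, following the lines of \cite{Bou05,bebo12}, I consider the symmetric bilinear form naturally attached to the wave symbol,
\[
B_\lambda\big((\ell,j),(\ell',j')\big) := -\lambda^2(\bar\omega\cdot\ell)(\bar\omega\cdot\ell') + \pla{j}{j'},
\]
which is of Lorentzian signature rather than positive definite. Expanding $D_{\ell_q,j_q}-D_{\ell_{q_0},j_{q_0}}$ by bilinearity and using the trivial bound $|\lambda\bar\omega\cdot\ell| \leq (3/2)|\ell|$ that follows from \eqref{omega} produces the analog of the second bound in \eqref{G.prop}:
\[
\big|B_\lambda\big((\ell_{q_0},j_{q_0}),(\ell_q-\ell_{q_0},\,j_q-j_{q_0})\big)\big| \leq C(\sL,d,n)\,|q-q_0|^2\,\Gamma^2.
\]
I then introduce the subspace $G \subset \R^{n+d}$ spanned by the differences $(\ell_q-\ell_{q_0},j_q-j_{q_0})$, select (in the generic Case~1 where $|q-q_0|\leq L^\varsigma$ already spans $G$) a basis $(f_i)_{i=1}^g$ of integer vectors with $|f_i|\leq L^\varsigma\Gamma$, and form the Gram-type matrix $A := \big(B_\lambda(f_i,f_l)\big)_{1\leq i,l\leq g}$.

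The main obstacle — and the principal difference with the elliptic proof — is that, since $B_\lambda$ is of Lorentzian signature, a \emph{uniform} lower bound on $|\det A|$ cannot be obtained by a purely structural argument as in Lemma~\ref{detA}; the determinant genuinely depends on $\lambda$ and may vanish at resonant values. This is the content of Lemma~\ref{lem:invW} to be established: one expands $\det A$ as a polynomial in $\lambda^2$ by a Cauchy--Binet calculation, and identifies the leading nonzero coefficient as a determinant of a compound minor of the invertible lattice matrix $W$, which is a \emph{nonzero integer} by \eqref{inv-comp}. The set $\wt\Lambda(N_0)$ is then produced by excising from $[1/2,3/2]$ the values of $\lambda$ at which $|\det A|$ is smaller than a threshold $\eta=\eta(N_0)$, uniformly over all integer bases $(f_i)$ with $|f_i|\leq N_0$ and all $\theta\in\R$, with the Diophantine hypothesis \eqref{omega1} on $\bar\omega$ controlling the small divisors and keeping the excised measure summable. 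The condition $\Gamma K \geq N_0$ ensures that the bases arising in the proof fall within the range $|f_i|\leq N_0$ at which $\wt\Lambda(N_0)$ was designed to work.

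Once the quantitative invertibility of $A$ is in hand on $\wt\Lambda(N_0)$, the rest is bookkeeping, as in Proposition~\ref{prop:lG}. Cramer's rule bounds the coordinates of the orthogonal projection of $(\ell_{q_0},j_{q_0})$ onto $G$ polynomially in $L^\varsigma\Gamma$. The hypothesis $(\ell_q,j_q)\in\Sigma_K$ enters exactly at this point: because for every fixed $\wt\jmath$ at most $K$ indices $q$ can satisfy $j_q=\wt\jmath$, the chain contains at least $L/K$ pairwise distinct $j$-coordinates, all constrained to a ball whose radius is polynomial in $L^\varsigma\Gamma$, yielding $L/K \leq C(\sL,d,n)(L^\varsigma\Gamma)^{C_1(d,n)}$. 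Choosing $\varsigma$ small enough that the exponent on $L$ is absorbed produces the desired bound $L \leq (K\Gamma)^{C(\sL,d,n,\tau_0)}$. Case~2, in which the short-range differences fail to span $G$, is handled by an inductive reduction on $\dim G$, exactly as in Proposition~\ref{prop:lG}.
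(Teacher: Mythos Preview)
Your overall strategy mirrors the paper's, but the choice to set up the Gram matrix on $\R^{n+d}$ rather than on $\R^{d+1}$ creates a genuine gap. The form $B_\lambda\big((\ell,j),(\ell',j')\big)=-\lambda^2(\bar\omega\cdot\ell)(\bar\omega\cdot\ell')+\pla{j}{j'}$ factors through the rank-$(d+1)$ map $(\ell,j)\mapsto(\lambda\bar\omega\cdot\ell,\,j)$, so on $\R^{n+d}$ it has an $(n-1)$-dimensional radical. Your subspace $G$ spanned by the full differences $(\ell_q-\ell_{q_0},j_q-j_{q_0})$ can have dimension up to $n+d$; whenever $g=\dim G>d+1$ --- and nothing prevents this --- the Gram matrix $A=(B_\lambda(f_i,f_l))$ is singular for \emph{every} $\lambda$, so no choice of $\wt\Lambda$ can make it invertible. (Even when $g\le d+1$, two basis vectors of the form $(\ell,0)$ with independent $\ell$'s already have colinear images in $\R^{d+1}$ and force $\det A\equiv 0$.) The paper avoids this by collapsing $\ell$ to the scalar $x_q:=\omega\cdot\ell_q$ at the outset and taking $G\subset\R^{d+1}$, the ambient space on which $\phi_\sL$ is nondegenerate; the resulting $f_i=(x_{q_i}-x_{q_0},\,j_{q_i}-j_{q_0})$ are no longer integer vectors, and $\det A_{\sL,\lambda}$ is the explicit polynomial \eqref{Pdef} in $\lambda^2$, shown to be not identically zero by evaluating at imaginary $\lambda$ (where the form becomes positive definite). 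Your claim that the leading coefficient is ``a nonzero integer'' does not survive even the paper's setup: the constant term $\|\fC_g(W)p\|^2$ may vanish when the $j$-differences are dependent.

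Two further points. First, the $\theta$-dependence is not disposed of by your bilinearity expansion: writing out $D_{\ell_q,j_q}(\lambda,\theta)-D_{\ell_{q_0},j_{q_0}}(\lambda,\theta)$ leaves an extra term $-2\theta\,\lambda\bar\omega\cdot(\ell_q-\ell_{q_0})$ with $\theta\in\R$ unbounded, so it does not bound $B_\lambda((\ell_{q_0},j_{q_0}),\cdot)$ as you assert; the paper first reduces to $\theta=0$ by a translation/density argument. Second, your description of $\wt\Lambda(N_0)$ as tailored to bases with $|f_i|\le N_0$ would be circular, since $|f_i|\le L^\varsigma\Gamma$ and $L$ is precisely the unknown. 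In the paper $\wt\Lambda$ is a Diophantine-type set (Lemma~\ref{lem:setcG}) valid for \emph{all} integer data $(p,m)\neq(0,0)$ simultaneously, yielding $|\det A_{\sL,\lambda}|\ge c(d)\,N_0^{-1}(\Gamma L^\varsigma)^{-\tau_2}$; the hypothesis $\Gamma K\ge N_0$ is only used at the end to absorb the factor $N_0$ into the final bound $(K\Gamma)^{C}$.
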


We postpone the proof  
to  section \ref{thm3.1} and we first  conclude the proof of
Theorem \ref{main2} for NLW.

\paragraph{Verification of the assumptions of Berti-Corsi-Procesi theorem.}

First,  \eqref{D.est} is trivially satisfied with $\sigma = 2$. Moreover,   provided $f \in C^\tq$ for $\tq$ large enough and $s_0 > (d+n)/2$,  the tame estimates (f1)--(f2) hold true.

We verify now Hypothesis 1--3 {concerning the  operator 
$ D (\lambda) + \e  (\partial_u f)(\vf, x, u(\vf,x))  $
obtained linearizing \eqref{F:NLW}
at a point  $u(\vf,x) \in H^{s}(\T^d \times \T^n, \R)$. This operator has the form \eqref{L.def} with 
the multiplication operator $ T(u) := - (\partial_u f)(\vf, x, u(\vf,x)) $}. 
\\[1mm]
\underline{Verification of Hypothesis 1.}
The {diagonal} matrix $D (\lambda) = \diag \{ D_{\ell,j}(\lambda)\}$ 
{representing}  \eqref{DW} fulfills the covariance property \eqref{2.24a} choosing  $\fD_{j}(y):= -y^2 + \mu_j + m$.

The multiplication operator $ T(u) $
is represented, in  the exponential basis   $e^{\im (\ell \cdot \vf + j \cdot x)}$, by a   T\"oplitz matrix
and \eqref{2.24c}--\eqref{2.24d} hold with $\sigma_0 = 0 $.
\\[1mm]
\underline{Verification of Hypothesis 2.} {Recalling \eqref{DW2t} 
one checks that 
$$
\left\lbrace \theta \in \R \colon 
\abs{D_{\ell, j}(\lambda, \theta)} \leq N^{-\tau_1} \right\rbrace \subset I_1 \cup I_2 \, , \quad
{\rm with \  intervals}  \ I_q \  {\rm such \ that } \  {\rm meas}(I_q) \leq N^{-\tau_1} / \sqrt{m} \, . 
$$
Therefore Hypothesis 2 holds with $\mathfrak{n} = 2 ( [ 1 / \sqrt{m} \, ] + 1)  $.
}
\\[1mm]
\underline{Verification of Hypothesis 3.} It is the content of Proposition \ref{thm:lcw}.

\smallskip

The measure estimate \eqref{meas.bad1}  follows exactly as in \cite{bebo12,BCP}, and we omit it.
{Applying Theorem \ref{BCP1} we prove Theorem \ref{main2} for the NLW}.

\subsection{Proof of Proposition \ref{thm:lcw}}\label{thm3.1}

Consider the quadratic form $Q_{\sL} : \R\times \R^{d} \to \R$ defined by
\begin{equation}
\label{defQW}
Q_{\sL} (x, y) := -x^2 +  \nla{y}  \qquad {\rm where} \qquad 
 \nla{y} :=  \norm{ Wy }^2
\end{equation}
 is introduced in \eqref{def:cQ}, 
and the associated symmetric bilinear form  
$ \phi_{\sL}:  (\R \times \R^{d})^2  \to \R$, 
\begin{equation}
\label{def:phiNLW}
\phi_{\sL}\big((x,y), (x', y') \big)  := -x x' +   { \pla{y}{y'} } \, , \quad
\forall (x, y), (x',y') \in \R \times \R^{d} \, , 
\end{equation}
where 
$ \pla{y}{y'} = \la W y, W y'  \ra_{\R^d}  $ is defined in \eqref{def:phi}.

First we notice that (as in the proof of \cite[Lemma 4.2]{bebo12}) 
it is enough to bound the length of a $\Gamma$-chain
$(\ell_q, j_q)_{q=0,\ldots, L}$   of singular sites of $D(\lambda,0)$. 
Indeed, consider a $ \Gamma $-chain of singular sites $( \ell_q, j_q)_{q=0, \ldots, L} $ for $ D(\lambda, \theta )$,
 i.e.
 \begin{equation}\label{singusites}
 |(\omega \cdot \ell_q + \theta )^2 - \mu_{j_q} - m | <  1   \, , \quad \forall q = 0, \ldots, L \, , \quad 
 \omega = \lambda \bar \omega \, . 
 \end{equation}
Suppose first that $ \theta = \omega \cdot \bar \ell $ for some $ \bar \ell \in \Z^n $.
 Then the translated $ \Gamma $-chain
$( \ell_q + \bar \ell, j_q)_{q=0, \ldots, L} $ 
is formed by singular sites for $ D(\lambda, 0 )$, namely
$$
|(\omega \cdot (\ell_q + \bar \ell ))^2 - \mu_{j_q} - m | <  1   \, , \quad \forall \ell = 0, \ldots, L \, . 
$$
For any $ \theta \in \R $, we consider an approximating sequence 
$ \omega \cdot {\bar \ell}_i \to \theta $, $ {\bar \ell}_i \in \Z^n  $. 
A $ \Gamma $-chain of singular sites for $ D(\lambda, \theta)$ (see \eqref{singusites}), is, for 
$ i $ large enough, also a $ \Gamma $-chain of singular sites for $ D(\lambda, \omega \cdot {\bar \ell}_i ) $. Then
 we bound its length arguing as in the above case.

{Let $(\ell_q, j_q)_{q=0,\ldots, L}$  be a  $\Gamma$-chain of singular sites of $D(\lambda,0)$}. 
Setting
$$
x_q:= \omega \cdot \ell_q = \lambda \bar\omega \cdot \ell_q \, , \qquad \forall q = 0, \ldots, L \, ,
$$
by the definition of singular sites 
and \eqref{defQW} we have  
 \begin{equation}
\label{qw1}
\abs{Q_{\sL}(x_q, j_q) + m} < 1   \, , \qquad \forall q=0, \ldots, L \, .
\end{equation}
\begin{lemma}
\label{lem:bound_wave}
For all $ q_0, q \in \{0,\ldots, L\}$ we have
\begin{equation}
\label{bound_wave}
\abs{\phi_{\sL}\Big((x_{q_0}, j_{q_0}), (x_q - x_{q_0}, j_q - j_{q_0}) \Big)}\leq C(\sL, d,n) |q-q_0|^2 \Gamma^2 \, .
\end{equation}
\end{lemma}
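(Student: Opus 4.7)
The plan is to mimic the strategy used in Lemma 2.3 for the Schrödinger case: expand $Q_\sL$ around the base point $(x_{q_0},j_{q_0})$ via bilinearity, then use the singular site condition to control the zeroth-order part and the $\Gamma$-chain property to control the quadratic correction.

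More precisely, since $Q_\sL(x,y)=\phi_\sL((x,y),(x,y))$, bilinearity of $\phi_\sL$ gives the polarization identity
\begin{equation*}
Q_\sL(x_q,j_q)=Q_\sL(x_{q_0},j_{q_0})+2\,\phi_\sL\!\Big((x_{q_0},j_{q_0}),(x_q-x_{q_0},j_q-j_{q_0})\Big)+Q_\sL(x_q-x_{q_0},j_q-j_{q_0}),
\end{equation*}
so it suffices to bound the first difference and the last quadratic remainder. By the singular site condition \eqref{qw1}, both $Q_\sL(x_q,j_q)$ and $Q_\sL(x_{q_0},j_{q_0})$ lie within $1$ of $-m$, hence $|Q_\sL(x_q,j_q)-Q_\sL(x_{q_0},j_{q_0})|<2$, a bound independent of $|q-q_0|\Gamma$.

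For the quadratic remainder I would estimate its two pieces separately. The definition $x_q-x_{q_0}=\lambda\bar\omega\cdot(\ell_q-\ell_{q_0})$ together with $\lambda\in[1/2,3/2]$ and $|\bar\omega|_1\le 1$ (from \eqref{omega}) gives $|x_q-x_{q_0}|\le (3/2)|\ell_q-\ell_{q_0}|$. Telescoping along the $\Gamma$-chain, $|\ell_q-\ell_{q_0}|\le|q-q_0|\Gamma$ and $|j_q-j_{q_0}|\le|q-q_0|\Gamma$. Combined with the obvious bound $\nla{y}\le C(\sL,d)|y|^2$ coming from the invertibility of $W$, this yields
\begin{equation*}
|Q_\sL(x_q-x_{q_0},j_q-j_{q_0})|\le |x_q-x_{q_0}|^2+\nla{j_q-j_{q_0}}\le C(\sL,d,n)\,|q-q_0|^2\Gamma^2.
\end{equation*}

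Plugging the two estimates into the polarization identity and dividing by $2$ produces
\begin{equation*}
|\phi_\sL((x_{q_0},j_{q_0}),(x_q-x_{q_0},j_q-j_{q_0}))|\le 1+\tfrac12 C(\sL,d,n)|q-q_0|^2\Gamma^2,
\end{equation*}
which is absorbed into $C(\sL,d,n)|q-q_0|^2\Gamma^2$ (using $\Gamma\ge 2$ when $q\ne q_0$, while the case $q=q_0$ is trivial since both sides vanish). No real obstacle is expected here; the only point requiring slight care is the use of the normalization $|\bar\omega|_1\le 1$ to control $|x_q-x_{q_0}|$ by the $\ell$-component of the $\Gamma$-chain, which is the step that makes the constant depend on $n$ only through $\bar\omega$.
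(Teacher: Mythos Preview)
Your argument is correct and follows essentially the same route as the paper: expand $Q_\sL$ by bilinearity, use the singular-site bound \eqref{qw1} to control $|Q_\sL(x_q,j_q)-Q_\sL(x_{q_0},j_{q_0})|$, and use the $\Gamma$-chain property to bound the quadratic remainder $Q_\sL(x_q-x_{q_0},j_q-j_{q_0})$. Your handling of the extra constant $1$ via $\Gamma\ge 2$ (and the trivial case $q=q_0$) is a nice touch that the paper leaves implicit.
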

\begin{proof}
By bilinearity
\begin{align*}
Q_{\sL}(x_q, j_q) = Q_{\sL}(x_{q_0}, j_{q_0})  + 2 \phi_{\sL}\big((x_{q_0}, j_{q_0}), (x_q - x_{q_0}, j_q - j_{q_0}) \big) + 
Q_{\sL}(x_q - x_{q_0}, j_q - j_{q_0}) 
\end{align*}
and then \eqref{bound_wave} follows combining \eqref{qw1} and the estimate 
\begin{align*}
\abs{Q_{\sL}(x_q - x_{q_0}, j_q - j_{q_0})} \leq |\omega \cdot (\ell_q - \ell_{q_0})|^2  + 
{ \nla{j_q - j_{q_0}} } \leq c(\sL,  d, n) \Gamma^2 |q-q_0|^2 
\end{align*}
which follows by the  definition of $\Gamma$-chain.
\end{proof}
Let us introduce  the following subspace of $\R^{d+1}$:
$$
\begin{aligned}
G &:= {\rm span }_\R \la (x_q - x_{q'}, j_q - j_{q'} )\, , \ 0 \leq q, q' \leq L \ra \\
& \ =   {\rm span }_\R \la (x_q -x_{q_0}, j_q - j_{q_0} ) \, , \ q = 0, \ldots, L \ra \,, \quad \forall q_0 =0, \ldots, L \,  . 
\end{aligned}
$$
Let $g := \dim G$. We have $ 1 \leq g \leq d + 1 $. 
We consider two cases.
\\[1mm]
{\em Case 1.}
For any $ q_0 \in \{0, \ldots, L\}$ it results that 
$
 {\rm span }_\R \la  (x_q - x_{q_0}, j_q - j_{q_0} ) \, ,  \ \ \abs{q-q_0} \leq L^\varsigma \ra = G 
$
for some $\varsigma := \varsigma (\sL, d,n, \tau_0) > 0 $ small enough, fixed later.
We select a basis of $G$ from $(x_q - x_{q_0}, j_q - j_{q_0} )$ with $\abs{q-q_0} \leq L^\varsigma $, say 
\begin{equation}
\label{fa}
f_i := (x_{q_i} - x_{q_0}, j_{q_i} - j_{q_0} ) =  \big(  \omega \cdot(\ell_{q_i} - \ell_{q_0}), j_{q_i} - j_{q_0} \big) \, ,
 \quad \forall 1 \leq i \leq g \, .
\end{equation}
Then, since $(\ell_q, j_q)_{q=0,\ldots, L}$ is a $ \Gamma $-chain,  
\begin{equation}
\label{est.fjw}
\abs{f_i} \leq n|\omega| |\ell_{q_i} - \ell_{q_0}| + |j_{q_i} - j_{q_0}| \leq n|\omega| |q_{i}-q_0| \Gamma \leq  C( d, n) L^\varsigma \Gamma 
\, , \quad \forall 1 \leq i \leq g \, .
\end{equation}
In order to derive from \eqref{bound_wave} a bound for $ (x_{q_0}, j_{q_0})$ we need a non degeneracy property for
the restriction $ ( \phi_{\sL})\vert_G $ of $ \phi_{\sL}$ to the subspace $G$. 
Then we consider the $ g \times g $  symmetric matrix 
\begin{equation}
\label{defA}
A_{\sL, \lambda} := A_{\sL, \lambda \bar \omega}  := 
(A^{i'}_{i})_{i,i'=1}^g \, , \qquad A_i^{i'} := \phi_{\sL}(f_i, f_{i'}) \, , 
\end{equation}
which represents the restriction {of the bilinear form $ \phi_{\sL} $ defined in 
\eqref{def:phiNLW}} to the subspace $ G $. 
The next key lemma, which is the technical part of our argument, shows that,  provided $\lambda$ is chosen in a set $\wt\Lambda$ of large measure,  the matrix $A_{\sL, \lambda}$ is invertible and the modulus of its  determinant  is bounded away from zero. 

\begin{lemma}
\label{lem:invW}
Let $A_{\sL, \lambda} $ 
be the matrix defined in \eqref{defA}. Then  
\begin{itemize}
\item[(i)] $\det A_{\sL, \lambda} $  has the form 
\begin{align}
\label{Pdef}
\det A_{\sL, \lambda} 
& = \norm{ \fC_g(W) p }^2 
- \lambda^2   \norm{ \fC_{g-1}(W) (\bar \omega \otimes m) }^2
\end{align}
where $ \fC_g(W)$ is the $g-th$ compound matrix of $W$, 
$\norm{\cdot}$  the euclidean norm,  $ p  \in  \Z^{\binom{d}{g}} $ and  
$ \bar \omega \otimes m := ( \bar \omega \cdot  m_{\vec a})_{\vec a} \in \R^{\binom{d}{g-1} n}$,  
$m_{\vec a} \in \Z^n $,  $ \vec a = (a_1,  \ldots , a_{g-1}) $ with  
$ 1 \leq a_1 < \ldots < a_{g-1} \leq d $. 
Each vector $m_{\vec a} $ satisfies  
\begin{equation}
\label{p.m.est}
|m_{\vec a} | \leq C( d)(\Gamma L^\varsigma)^{g} \,. 
\end{equation}
\item[(ii)]
 $\det A_{\sL, \lambda} $ is not  identically zero as function of  $ \lambda^2 $.
\item[(iii)] For any $N_0$ sufficiently large, there   exists a set $\wt\Lambda := \wt\Lambda(N_0) \subset \Lambda$ with $\meas(\Lambda \setminus \wt\Lambda) \leq O(N_0^{-1})$ such that  for any $\lambda \in \wt\Lambda$ one has,
for some $ c(d) > 0 $ and $\tau_2 := \tau_2 (d,n, \tau_0) $, 
\begin{equation}
\label{AW}
\abs{\det A_{\sL, \lambda}}  \geq \frac{c(d)}{N_0   (\Gamma L^\varsigma)^{\tau_2}}  \, . 
\end{equation}
\end{itemize}
\end{lemma}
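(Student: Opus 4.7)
The plan is to recast $ A_{\sL, \lambda} $ in a form amenable to Cauchy--Binet with the signature of $ \phi_{\sL} $. Let $ F \in \R^{(d+1) \times g} $ be the matrix with columns $ f_i = (\lambda \bar\omega \cdot \ell'_i, j'_i) $, where $ \ell'_i := \ell_{q_i} - \ell_{q_0} $ and $ j'_i := j_{q_i} - j_{q_0} $, and let $ L \in \Z^{n \times g} $, $ J \in \Z^{d \times g} $ be the matrices with columns $ \ell'_i $ and $ j'_i $ respectively. Writing $ Q := \diag(-1, W^\top W) $ for the Gram matrix of $ \phi_{\sL} $ in the standard basis of $ \R^{d+1} $, one has $ A_{\sL, \lambda} = F^\top Q F $. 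Factor $ Q = \tilde W^\top \Lambda \tilde W $ with $ \tilde W := \diag(1, W) $ and $ \Lambda := \diag(-1, 1, \ldots, 1) $; then $ A_{\sL, \lambda} = M^\top \Lambda M $ with $ M := \tilde W F = \begin{pmatrix} \lambda u^\top \\ WJ \end{pmatrix} $ and $ u^\top := \bar\omega^\top L $.

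\textbf{Proof of (i).} Since $ \Lambda $ is diagonal, Cauchy--Binet yields
\[
\det A_{\sL, \lambda} = \!\!\!\! \sum_{\substack{\vec\alpha \subset \{1, \ldots, d+1\}\\ |\vec\alpha| = g}} \!\!\!\!\Big(\prod_{k \in \vec\alpha} \Lambda_{kk}\Big)(\det M_{\vec\alpha})^2 = \!\!\sum_{\vec\alpha \not\ni 1}\!(\det M_{\vec\alpha})^2 - \!\!\sum_{\vec\alpha \ni 1}\!(\det M_{\vec\alpha})^2 .
\]
For $ \vec\alpha \not\ni 1 $, $ M_{\vec\alpha} = W_{\vec c} J $ with $ \vec c := \vec\alpha - 1 $, and a second Cauchy--Binet reproduces the argument of Lemma~\ref{detA}, showing the first sum equals $ \|\fC_g(W) p\|^2 $ with $ p_{\vec a} := \det J_{\vec a} \in \Z $ (and $ p \equiv 0 $ if $ g = d+1 $). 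For $ \vec\alpha \ni 1 $, write $ \vec\alpha = \{1\} \cup (1 + \vec c) $ for a $ (g-1) $-subset $ \vec c \subset \{1, \ldots, d\} $, so $ M_{\vec\alpha} = \begin{pmatrix} 1 & 0 \\ 0 & W_{\vec c} \end{pmatrix} F $; Cauchy--Binet gives
\[
\det M_{\vec\alpha} = \sum_{\vec a} \det(W_{\vec c}^{\vec a}) \det\begin{pmatrix} \lambda u^\top \\ J_{\vec a} \end{pmatrix} = \lambda \sum_{\vec a} \det(W_{\vec c}^{\vec a}) (\bar\omega \cdot m_{\vec a}) ,
\]
where multilinearity in the first row of $ \begin{pmatrix} u^\top \\ J_{\vec a} \end{pmatrix} $ via $ u^\top = \bar\omega^\top L $ isolates the integer vectors $ (m_{\vec a})_p := \det\begin{pmatrix} L_p \\ J_{\vec a} \end{pmatrix} \in \Z $. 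Hence $ \det M_{\vec\alpha} = \lambda (\fC_{g-1}(W)(\bar\omega \otimes m))_{\vec c} $, and the second sum equals $ \lambda^2 \|\fC_{g-1}(W)(\bar\omega \otimes m)\|^2 $, proving \eqref{Pdef}. The bound \eqref{p.m.est} follows from Hadamard's inequality applied to the $ g \times g $ determinants defining $ (m_{\vec a})_p $, using the entry-wise bounds $ |\ell'_i|_\infty, |j'_i|_\infty \leq C(d) L^\varsigma \Gamma $ from \eqref{est.fjw}.

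\textbf{Proofs of (ii) and (iii).} Since $ f_1, \ldots, f_g $ are linearly independent, some $ g \times g $ minor of $ F $ is nonzero: either a pure $ J $-minor, giving $ p \neq 0 $, or a mixed minor (involving the first row), whose nonvanishing forces some $ \bar\omega \cdot m_{\vec a} \neq 0 $ and hence $ m_{\vec a} \in \Z^n \setminus \{0\} $. Set $ a := \|\fC_g(W) p\|^2 $ and $ b := \|\fC_{g-1}(W)(\bar\omega \otimes m)\|^2 $, so $ \det A_{\sL, \lambda} = a - b \lambda^2 $. In the first case, invertibility of $ \fC_g(W) $ for $ g \leq d $ combined with $ |p| \geq 1 $ gives $ a \geq c(\sL, d) > 0 $; in the second, the Diophantine condition \eqref{omega1} and \eqref{p.m.est} yield $ b \geq c(\sL, d) (\bar\omega \cdot m_{\vec a})^2 \geq c(\sL, d, n, \tau_0)(\Gamma L^\varsigma)^{-2(d+1)\tau_0} $. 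Either way $ a + b > 0 $, establishing (ii). For (iii), if $ b = 0 $ then $ |\det A_{\sL, \lambda}| = a \geq c(\sL, d) $ uniformly in $ \lambda $; if $ b > 0 $, then $ |d(a - b\lambda^2)/d\lambda| = 2b|\lambda| \geq b $ on $ [1/2, 3/2] $, so $ \meas\{ \lambda : |a - b\lambda^2| < \eta\} \leq 2\eta/b $. Choosing $ \eta := c(\sL, d)/(N_0(\Gamma L^\varsigma)^{\tau_2}) $ with $ \tau_2 := \tau_2(d, n, \tau_0) $ sufficiently large and a union bound over the polynomially-many admissible configurations $ (g, p, m) $ compatible with the bound \eqref{p.m.est}, one obtains a set $ \wt\Lambda(N_0) $ of complement measure $ O(N_0^{-1}) $ on which \eqref{AW} holds.

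\textbf{Main obstacle.} The decisive technical step is the double Cauchy--Binet expansion in (i): the multi-index bookkeeping must be performed precisely enough to recognize that the mixed $ g $-minors of $ F $ arising from the ``negative'' first row of $ \Lambda $ reassemble into the structured compound-matrix expression $ \fC_{g-1}(W)(\bar\omega \otimes m) $ with \emph{integer-valued} entries $ (m_{\vec a})_p $. Without this structural identification, the scalar Diophantine condition \eqref{omega1} on $ \bar\omega $ would not suffice to lower-bound $ b $ in (ii)-(iii), and one would be forced to impose additional higher-order Diophantine conditions on $ \bar\omega $ (as in \cite{Bou05, bebo12, Wpre}) — which is precisely the complication the present argument sidesteps.
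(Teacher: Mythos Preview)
Your treatment of (i) and (ii) is correct and in fact cleaner than the paper's. The paper derives \eqref{Pdef} by writing $A_{\sL,\lambda}=-S+R$ with $S$ of rank one, expanding $\det(-S+R)$ by multilinearity, and then carrying out a lengthy explicit index computation (their Lemma~\ref{lem:A.w}) to identify the $\lambda^2$ term with $\|\fC_{g-1}(W)(\bar\omega\otimes m)\|^2$. Your single application of Cauchy--Binet to $M^\top\Lambda M$, splitting multi-indices according to whether they pick the first (negative-signature) row, achieves the same structural identification with far less bookkeeping. For (ii), the paper argues that $\det A_{\sL,\lambda}$ evaluated at $i\lambda$ is the Gram determinant of the $f_i$ for the positive-definite form $\phi_1+\phi_2$, hence nonzero; your rank argument on $F$ is an equally valid alternative.

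Part (iii), however, has a genuine gap. You choose $\eta=c/(N_0(\Gamma L^\varsigma)^{\tau_2})$ and take a union bound over the ``polynomially-many admissible configurations $(g,p,m)$ compatible with \eqref{p.m.est}''. That polynomial count is polynomial in $\Gamma L^\varsigma$, so the bad set you excise depends on $\Gamma$ and $L$ --- i.e.\ on the particular chain --- and you obtain at best a set $\wt\Lambda(N_0,\Gamma,L)$. But the lemma (and Hypothesis~3 downstream) requires a \emph{single} set $\wt\Lambda(N_0)$ that works simultaneously for every chain, hence for every integer pair $(p,m)\neq(0,0)$. Summing your fixed-$\eta$ estimate over all integers diverges. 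The paper's remedy (their Lemma~\ref{lem:setcG}) is to make the threshold $m$-dependent, excising $\{\lambda:|P_{p,m}(\lambda^2)|<\gamma/(1+|m|^\tau)\}$ for each $(p,m)\neq(0,0)$; then (a) each bad set has measure $\lesssim\gamma/(|\zeta_m(\bar\omega)|(1+|m|^\tau))$, (b) nonemptiness forces $|p|\lesssim|m|$, and (c) the resulting sum over all integer $(p,m)$ converges to $O(\gamma)$. Taking $\gamma=N_0^{-1}$ and invoking \eqref{p.m.est} for the specific chain then yields \eqref{AW} with $\tau_2=g\tau$. Your argument is easily repaired along these lines, but as written the uniformity in $\Gamma,L$ is missing.
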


\noindent
We postpone the proof of Lemma \ref{lem:invW} to  section \ref{sec:5.3}, first concluding 
the proof of Proposition \ref{thm:lcw}.

Take $\lambda \in \wt\Lambda$, so that, by the previous lemma,   the matrix 
$ A_{\sL, \lambda}$ is invertible.
Therefore  the symmetric bilinear form $\phi_{\sL}\vert_G$ is nondegenerate and it induces the splitting 
$$
\R^{d+1} = G \oplus G^{\bot \phi_{\sL}} \qquad {\rm where} \qquad
G^{\bot \phi_{\sL}} := \left\lbrace
z \in \R^{d+1} \colon \phi_{\sL}(z, f) = 0  \quad \forall f \in G  \right\rbrace \, . 
$$
We denote by $P_G :\R^{d+1} \to G$ the  corresponding projector on $G$.

In order to  obtain bounds for  the projection of each $P_G (x_{q_0}, j_{q_0}) $, 
we decompose it on the basis $(f_i)_{1 \leq i \leq g}$, 
\begin{equation}\label{PG:dec}
P_G (x_{q_0}, j_{q_0}) = \sum_{i=1}^g z_i f_i \, ,  
\end{equation}
and we look for  bounds of the coordinates $z=(z_i)_{1 \leq i \leq g}$.
The $z$ are determined by solving  the linear system  
$A_{\sL, \lambda} z = \bb$, where  $A_{\sL, \lambda} $ is the matrix in \eqref{defA} and 
$$
\bb := (\bb_l)_{1 \leq l \leq g} \in \R^g \, , \qquad
\bb_l := \phi_{\sL}( P_G (x_{q_0}, j_{q_0}), f_l) = \phi_{\sL}(  (x_{q_0}, j_{q_0}), f_l) \, . 
$$
Then the Cramer rule, the estimates \eqref{bound_wave}, \eqref{AW} and 
{$ |A_l^{i}| \leq c(\sL, d,n) |f_i| |f_{l}| \leq C(\sL,d,n) (\Gamma L^\varsigma)^2$, by \eqref{est.fjw}},  give
\begin{equation}
\label{normz}
\abs{z_i}  \leq {N_0} \, C(\sL,d,n) (\Gamma L^\varsigma)^{2g + \tau_2} , \qquad \forall 1 \leq i \leq g \, . 
\end{equation}
From  {\eqref{PG:dec}},   \eqref{normz}, \eqref{est.fjw},  we deduce 
$$
|P_G (x_{q_0}, j_{q_0})| \leq {N_0} \, C(\sL,d,n) 
(\Gamma L^\varsigma)^{2g + \tau_2+1} \, , \quad \forall q_0 \in \{0, \ldots, L \} \, .
$$
Therefore we get that, for all $  q_1, q_2 \in \{0, \ldots, L \} $,  
$$
|(x_{q_1}, j_{q_1}) - (x_{q_2}, j_{q_2})| = 
|P_G (x_{q_1}, j_{q_1}) - P_G (x_{q_2}, j_{q_2})|  \leq 
{N_0} \, C(\sL,d,n) (\Gamma L^\varsigma)^{2g + \tau_2+1} \, .
$$
In particular, for all  $ q_1, q_2 \in \{0, \ldots, L \} $ we have also
$|j_{q_1} - j_{q_2}|  \leq {N_0} \, 
 C(\sL,d,n) (L^\varsigma \Gamma)^{2d + \tau_2+1}$, and so the
 sequence  $\{j_q\}_{q = 0, \ldots , L}$ is  contained in a ball of diameter 
$ N_0 \, C(\sL,d,n) (L^\varsigma \Gamma)^{2d + \tau_2+1}$. 
 Since they are integer vectors, their number (counted without multiplicity), does not exceed 
 $ {N_0^d}  C(\sL,d,n) (L^\varsigma \Gamma)^{(2d + \tau_2+1)d}$.
By the assumptions of Proposition \ref{thm:lcw} the 
{$ \Gamma $-chain 
 $(\ell_q, j_q)_{q=0,\ldots, L}$  is in $ \Sigma_{K} $, thus, 
for any $q_0\in \{0, \ldots,  L \}$, the cardinality of the set  of singular sites of the chain with fixed $j_{q_0}$ is bounded by} 
 $$
\sharp{ \{ (\ell_q, j_q)_{q = 0, \ldots ,L} \colon j_q = j_{q_0} \} } \leq K \, ,
 $$
and we deduce  that 
 $$
 L \leq {N_0^d} \, C(\sL,d,n) (L^\varsigma \Gamma)^{(2d + \tau_2+1)d} K \leq
 {(\Gamma K)^d} \, C(\sL,d,n) (L^\varsigma \Gamma)^{(2d + \tau_2+1)d} 
 \, .
 $$
 By choosing $\varsigma $ 
such that $ \varsigma (2d + \tau_2+1)d \leq 1 / 2 $,  we deduce that $L \leq 
C(\sL,d,n)  {(K \Gamma)^{C(d, \tau_2)}} $.
 \\[1mm]
{\em Case 2.}  $\exists q_0 \in \{ 0, \ldots, L \} $ for which
$$
G_1 := {\rm span }_\R \la  (x_q - x_{q_0}, j_q - j_{q_0} ) \, ,  \ \ \abs{q-q_0} \leq L^\varsigma \ra \subsetneq G \, , 
\quad  g_1 := \dim G_1 \leq g-1 \, ,
$$
so all vectors $(x_q - x_{q_0}, j_q - j_{q_0} )$ belong to an affine subspace of dimension $1 \leq g_1 \leq g-1$. Then we repeat the argument of Case 1 on the subchain $\{ (\ell_q, j_q) \colon \ |q-{q_0}| \leq L^\varsigma \}$, obtaining 
a bound on $L^\varsigma $, thus for $ L $.
Applying this procedure at most $d+1$ times, we obtain a bound on $L$ of the claimed form 
$L \leq (K \Gamma)^{C(\sL, d, n, \tau_0)}$.

\subsection{Proof of Lemma \ref{lem:invW}}\label{sec:5.3}

Write the bilinear form $\phi_{\sL}$ in \eqref{def:phiNLW} as the sum of two symmetric bilinear forms:
\begin{equation}
\label{split}
\begin{aligned}
& \phi_{\sL}  = - \phi_1 + \phi_2 \, , \\
&\phi_1\big((x,y), (x', y') \big)  :=  x x' , 
\qquad
\phi_2\big((x,y), (x', y') \big)  := {\pla{y}{y'}} \, , 
\end{aligned}
\end{equation}
where
$ {\pla{y}{y'}} =  \la W y, W y'  \ra_{\R^d} $ and  
$ W = \lV^{- \top}$ {is defined in \eqref{VeW}}.

Correspondingly  we decompose the symmetric matrix $A_{\sL,\lambda}  $ in \eqref{defA}, 
as $ A_{\sL,\lambda} = - S + R $
where the symmetric matrices 
$S = (S_i^{i'})_{1 \leq i, i' \leq g}$ and $R = (R_i^{i'})_{1 \leq i, i' \leq g}$ are given by
\begin{align}
\label{defS1}
& 
S_{i}^{i'} :=  \phi_1(f_{i'}, f_{i}) = (\omega \cdot  l_{i'}) \, (\omega \cdot l_{i}) \, , 
\qquad l_i:= \ell_{q_i} - \ell_{q_0} \in \Z^n \, , \\
& \label{defR1}
R_i^{i'} := \phi_2(f_{i'}, f_{i}) = { \pla{k_{i'}}{ k_{i}} }  \,  , 
\qquad \quad  \ \ k_i:= j_{q_i} - j_{q_0} \in \Z^d \, . 
\end{align}
Since $(\ell_q, j_q) $, with indices $ |q - q_0 | \leq L^\varsigma $,  is a $\Gamma$-chain, we have
\begin{equation}
 \label{liki1}
 \max\left\lbrace{|l_i|, |k_i|}\right\rbrace 
\leq |q_i - q_0| \Gamma \leq L^\varsigma \Gamma \ , \quad \forall 1 \leq i \leq g \, .  
 \end{equation} 
We write $ S = ( S_1 | \cdots | S_g ) $ and $ R = (R_1| \cdots | R_g ) $ where $ S_i $, $ R_i \in \R^g $,
$ i = 1, \ldots, g $ denote the columns of $ S $ and  $ R $.
The matrix $ S $ has rank 1 since all its columns $S_i \in \R^g$ are colinear:
\begin{equation}\label{def:ell1}
S_i = (\omega \cdot l_i ) \, \bl \,  , \qquad \forall i = 1, \ldots , g \,
 \, ,  \qquad \mbox{ where }  \ \ \bl:= \begin{pmatrix}
\omega \cdot l_1 \\
\vdots\\
\omega \cdot l_g
\end{pmatrix} \,  . 
\end{equation}
By the multilinearity of the determinant along each column, 
we develop  $ \det A_{\sL, \lambda} $ as
\begin{equation}\label{detASR1}
\det A_{\sL, \lambda} = \det (-S + R) 
 = \det R - { \det(S_1| R_2| \cdots | R_g) - \ldots  - \det(R_1| \cdots| R_{g-1}| S_g)} 
\end{equation}
where we used that the determinant of a matrix with two colinear columns $S_i, S_{i'}$ is null.

Now remark that $\det R$ is the same that we computed in Lemma \ref{detA} {(substituting
 $ k_i \rightsquigarrow f_i $ in \eqref{l.eqA})}, so we obtain 
\begin{align}
\label{detR}
\det R & =  \norm{\fC_g(W)p}^2
\end{align}
for some vector  $ p \in \Z^{\binom{d}{g}}$ {(notice that 
$ p $ could be zero since the vectors
$ k_1, \ldots, k_g $ may not be independent).}

In the next lemma we  compute  $\det(S_1| R_2| \cdots |R_g) + \cdots  + \det(R_1| \cdots | R_{g-1} | S_g)$.
\begin{lemma}\label{lem:A.w}
One has
\begin{equation}
\label{det2w}
\begin{aligned}
{\det(S_1 | R_2 | \cdots | R_g)}  & + \cdots  + { \det(R_1 | \cdots| R_{g-1} | S_g)}  \\
&=  \lambda^2  \sum_{1 \leq c_1 < \cdots < c_{g - 1} \leq d}
\left( \sum_{1 \leq a_1 < \ldots < a_{g-1}\leq d} 
\det(W_{c_1 \cdots c_{g-1}}^{a_1 \cdots a_{g-1}}) \, \bar \omega \cdot m_{a_1 \ldots a_{g-1}} \right)^2
\end{aligned}
\end{equation}
with integer coefficients $m_{a_1 \ldots a_{g-1}} \in \Z^n$  satisfying
\begin{equation}
\label{m.wave}
|m_{a_1 \ldots a_{g-1}}| \leq C(d) (L^\varsigma \Gamma)^{g} \, . 
\end{equation}
\end{lemma}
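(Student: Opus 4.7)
The plan is to exploit that $S$ has rank one; indeed $S=\bl\bl^{\top}$ by \eqref{def:ell1}. Expanding $\det(R-S)$ by multilinearity of the determinant in columns and observing that any term replacing two or more columns of $R$ with columns of $S$ vanishes (those columns become proportional to $\bl$), I obtain
\begin{equation*}
\det(R-S)\;=\;\det R\,-\,\sum_{i=1}^{g}\det(R_1|\cdots|S_i|\cdots|R_g).
\end{equation*}
Hence the quantity I must compute equals $\det R-\det(R-\bl\bl^{\top})=\bl^{\top}\mathrm{adj}(R)\,\bl$, by the rank-one matrix-determinant identity (valid as a polynomial identity, with no invertibility hypothesis on $R$). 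Being a quadratic form in $\bl$, hence in $\omega=\lambda\bar\omega$, this already accounts for the prefactor $\lambda^{2}$ in \eqref{det2w}.

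The next step unfolds $\bl^{\top}\mathrm{adj}(R)\,\bl$ as a sum of squares via Cauchy-Binet. Writing $R=B^{\top}B$ with $B:=WK$ and $K:=(k_1|\cdots|k_g)\in\Z^{d\times g}$, each cofactor of $R$ factors as
\begin{equation*}
\det\!\bigl(R^{(i,i')}\bigr)\;=\;\sum_{|a|=g-1}\det\!\bigl(B_{a}^{\hat\imath'}\bigr)\,\det\!\bigl(B_{a}^{\hat\imath}\bigr),
\end{equation*}
where $R^{(i,i')}$ denotes $R$ with row $i'$ and column $i$ removed, $a=(a_1<\cdots<a_{g-1})$ ranges over ordered $(g-1)$-subsets of $\{1,\ldots,d\}$, and $B_{a}^{\hat\imath}$ is the $(g-1)\times(g-1)$ submatrix of $B$ with rows $a$ and all columns except the $i$-th. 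Substituting into $\bl^{\top}\mathrm{adj}(R)\,\bl=\sum_{i,i'}(-1)^{i+i'}\bl_i\bl_{i'}\det(R^{(i,i')})$, the double sum factors and the alternating signs are absorbed into a Laplace expansion:
\begin{equation*}
\bl^{\top}\mathrm{adj}(R)\,\bl\;=\;\sum_{|a|=g-1}\Bigl(\sum_{i=1}^{g}(-1)^{i}\bl_i\det(B_{a}^{\hat\imath})\Bigr)^{2}\;=\;\sum_{|a|=g-1}(\det M_a)^{2},
\end{equation*}
where $M_a$ is the $g\times g$ matrix obtained by appending $\bl^{\top}$ as the last row of $B_a=W_aK$; the inner sum is $\pm\det M_a$ by Laplace expansion along that last row, and the global sign is killed by squaring.

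Finally, to display the $W$-dependence I factor
\begin{equation*}
M_a\;=\;\begin{pmatrix}W_a & 0 \\ 0 & \lambda\bar\omega^{\top}\end{pmatrix}\begin{pmatrix}K\\ \tilde L\end{pmatrix},\qquad \tilde L:=(l_1|\cdots|l_g)\in\Z^{n\times g},
\end{equation*}
and apply Cauchy-Binet one more time. The block-diagonal structure of the left factor forces any nonzero $g\times g$ minor of it to pick exactly $g-1$ columns from $\{1,\ldots,d\}$ (indexed by some $c=(c_1<\cdots<c_{g-1})$) together with one column $d+p$, $p\in\{1,\ldots,n\}$; such a minor equals $\lambda\bar\omega_p\det(W_a^{c})$. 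The matching $g\times g$ minor of the right factor is the integer
\begin{equation*}
m_c^{p}\;:=\;\det\!\begin{pmatrix}K_c\\ \tilde L_p\end{pmatrix}\in\Z,
\end{equation*}
so, setting $m_c:=(m_c^{1},\ldots,m_c^{n})\in\Z^{n}$, I conclude $\det M_a=\lambda\sum_{|c|=g-1}\det(W_a^{c})\,(\bar\omega\cdot m_c)$. Squaring and relabeling $a\leftrightarrow c$ (so that $W_a^{c}$ becomes $W_c^{a}$ and $m_c$ becomes $m_a$) yields exactly \eqref{det2w}. The estimate \eqref{m.wave} is then immediate from Hadamard's inequality: each entry of the integer matrix defining $m_c^{p}$ is a component of some $k_i$ or $l_i$, bounded in modulus by $L^{\varsigma}\Gamma$ thanks to \eqref{liki1}, giving $|m_c^{p}|\leq g!(L^{\varsigma}\Gamma)^{g}\leq C(d)(L^{\varsigma}\Gamma)^{g}$. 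The only delicate point in this plan is carefully tracking indices and signs across the two Cauchy-Binet applications; the rest is pure linear algebra.
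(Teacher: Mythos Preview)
Your argument is correct and substantially cleaner than the paper's. The paper proceeds by brute-force multilinear expansion: it writes each column $R_s=\sum_{a,b,c}W_c^aW_c^b k_s^{(b)}\bkn^{(a)}$, substitutes into every determinant $\det(R_1|\cdots|S_i|\cdots|R_g)$, and then carries out a long index-tracking computation (permutations $\sigma_{\vec a},\sigma_{\vec b},\sigma_{\vec c}$, Leibniz formula, Laplace expansion of \eqref{det-plus1}) over several displayed equations before the sum-of-squares structure emerges. You instead recognise the whole sum at once as $\det R-\det(R-\bl\bl^{\top})=\bl^{\top}\mathrm{adj}(R)\,\bl$ via the rank-one determinant identity, then let Cauchy--Binet (applied to $R=(WK)^{\top}(WK)$ for the cofactors, and once more to the augmented matrix $M_a$) do all the combinatorics automatically. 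The two routes produce the same integers $m_a$ up to an overall sign $(-1)^{g-1}$, which is harmless after squaring. Your approach makes the algebraic structure transparent and would scale more easily to related problems; the paper's hands-on expansion has the minor advantage of being completely self-contained (no appeal to the adjugate identity), but at the cost of considerably more bookkeeping.
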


\begin{proof}
Developing the matrix $R$ in \eqref{defR1}  (recall \eqref{def:phi} and \eqref{VeW}) 
we get
$$
R = \sum_{a,b,c=1}^d W_c^a \,  W_c^b \, R^{(ab)}_s \, , 
\qquad R^{(ab)} := \begin{pmatrix}
k_1^{(a)} \\ 
\vdots\\
k_g^{(a)}
\end{pmatrix} 
\begin{pmatrix}
k_1^{(b)} ,  
\ldots , k_g^{(b)}
\end{pmatrix} \, , 
$$
where $k_i^{(a)}$ is the $a^{th}$-component of the vector $k_i$.
Note that the $g \times g$ {symmetric} matrix $R^{(ab)}$ has colinear columns, with the $s$ column given by
\begin{equation}\label{def:vco1}
R^{(ab)}_s =  k_s^{(b)}  \bkn^{(a)}  , \qquad \bkn^{(a)} := \begin{pmatrix}
k_1^{(a)} \\ 
\vdots\\
k_g^{(a)}
\end{pmatrix} \, , \qquad a,b=1, \ldots, d \, , \quad s=1, \ldots, g \, . 
\end{equation}
Writing the columns of $R$ as 
$R_s = \sum_{a,b,c=1}^d W_c^a \,  W_c^b  \, R_s^{(ab)}$ and substituting in the first line of \eqref{det2w},  we get,
by the multilinearity of the determinant, 
\begin{align*}
& \det(S_1 | R_2 |  \cdots | R_g) + \cdots  + \det(R_1| \cdots |R_{g-1} | S_g) \\
& =  \sum_{m_2, \ldots, m_g=1 \atop { n_2, \ldots, n_g =1 \atop o_2, \ldots, o_g = 1}}^d  
\ \ \prod_{2 \leq h \leq g}W_{o_h}^{m_h} W_{o_h}^{n_h}\cdot
\det \big(S_1 | R^{(m_2 n_2)}_2 | \cdots | R^{(m_g n_g)}_g \big) \\
& \qquad  + \ldots + 
 \sum_{m_1, \ldots, m_{g-1}=1 \atop { n_1, \ldots, n_{g-1} =1 \atop o_1, \ldots, o_{g-1} = 1}}^d 
\ \ \prod_{1 \leq h \leq g-1}W_{o_h}^{m_h} W_{o_h}^{n_h}\cdot 
\det \big( R^{(m_1 n_1)}_1 |  \cdots | R^{(m_{g-1} n_{g-1})}_{g-1} | S_g \big) \\
&  \stackrel{ {\eqref{def:ell1}, \eqref{def:vco1}} } =
 \sum_{m_1, \ldots, m_{g-1}=1 \atop { n_1, \ldots, n_{g-1} =1 \atop o_1, \ldots, o_{g-1} = 1}}^d 
\ \ \prod_{1 \leq h \leq g-1}W_{o_h}^{m_h} W_{o_h}^{n_h}\cdot 
  (\omega \cdot l_1) k_2^{(n_1)} \cdots k_g^{(n_{g-1})} 
\det \big(\bl | \bkn^{(m_1)} |  \cdots | \bkn^{(m_{g-1})} \big) 
 \\
&   + \ldots +  \sum_{m_1, \ldots, m_{g-1}=1 \!\!\!\! \!\!\!\! \atop { n_1, \ldots, n_{g-1} =1 \atop o_1, \ldots, o_{g-1} = 1}}^d
\  \prod_{1 \leq h \leq g-1}W_{o_h}^{m_h} W_{o_h}^{n_h}
  (-1)^{g-1}  (\omega \cdot l_g) k_1^{(n_1)} \cdots k_{g-1}^{(n_{g-1})}  \det \big(\bl |  \bkn^{(m_1)} | \cdots | \bkn^{(m_{g-1})} \big) \, .
\end{align*}
Note that, in the last equality, we have renamed the indices  in the first sum and   reordered the columns of the matrix inside the determinant to have the vector $\bl$ always in the first position.
Now we notice that 
the only non {zero} determinants are those where the indices $m_i$ are all different, otherwise at least two columns are colinear. 
Thus the sums above are on distinct indices, namely in each sum  $m_i \neq m_{i'}$ for $i \neq i'$.
Therefore the 
$m_i$'s in each monomial are just a permutation of some $1 \leq a_1  < \cdots< a_{g-1} \leq d$. Denote by $\sigma_{\vec a}$ the permutation of these $ g - 1 $ indices defined by
$\sigma_{\vec a}(a_i) = m_i$ $\ \forall i$, and
by  $ {\rm sign }\, \sigma_{\vec a} $ its signature. 
By the alternating property of the determinant, we have
$$
\det \big(\bl | \bkn^{(m_1)} | \cdots | \bkn^{(m_{g-1})} \big)  = 
\det \big(\bl | \bkn^{(a_1)} | \cdots | \bkn^{(a_{g-1})} \big) \, {\rm sign }\, \sigma_{\vec a} \, . 
$$
Therefore $\det(S_1 | R_2 | \cdots |  R_g) + \ldots  + \det(R_1 |  \cdots |  R_{g-1} | S_g)$ equals 
\begin{equation}
\label{derW1}
\begin{aligned}
& \sum_{1 \leq a_1 < \cdots < a_{g - 1} \leq d} 
 \det \big(\bl |  \bkn^{(a_1)} | \cdots | \bkn^{(a_{g-1})} \big)  \sum_{n_1, \ldots, n_{g-1} = 1 \atop o_1, \ldots, o_{g-1} = 1}^d 
 \left(\sum_{\sigma_{\vec a} \in S_{g-1}}
{\rm sign }\,  \sigma_{\vec a} 
\prod_{1 \leq h \leq g-1}W_{o_h}^{\sigma_{\vec a}(a_h)}  \right)\\
& \quad \quad \times \prod_{1 \leq h \leq g-1}W_{o_h}^{n_h} \Big[
 (\omega \cdot l_1)k_2^{(n_1)} \ldots k_{g}^{(n_{g-1})}   +  \ldots + (-1)^{g-1} (\omega \cdot l_g)  k_1^{(n_1)}\ldots  k_{g-1}^{(n_{g-1})} 
\Big] 
\end{aligned}
\end{equation}
where $ S_{g-1} $ denotes the group of all possible permutations of the indices $ a_1 < \cdots < a_{g-1}$. Now one has
$$
\sum_{\sigma_{\vec a} \in S_{g-1}}
{\rm sign }\,  \sigma_{\vec a} \, W_{o_1}^{\sigma_{\vec a}(a_1)} \cdots W_{o_{g-1}}^{\sigma_{\vec a} (a_{g-1})}   = \det (W_{o_1 \ldots o_{g-1}}^{a_1 \ldots a_{g-1}})
$$
and this determinant is not zero only if the $o_i$ are permutation of some indices $1 \leq c_1 < \ldots < c_{g-1} \leq d$. As above, we denote by $\sigma_{\vec c} $ 
the permutation such that  $\sigma_{\vec c} (c_i) = o_i $, for all $ i $.
We obtain
\begin{equation}
\label{derW11}
\begin{aligned}
\eqref{derW1} = \!\!\!\!\!\!
 & \sum_{1 \leq a_1 < \cdots < a_{g - 1} \leq d \atop
1 \leq  c_1 < \ldots < c_{g-1} \leq d}
 \det \big(\bl |  \bkn^{(a_1)} | \cdots | \bkn^{(a_{g-1})} \big) \det (W_{c_1 \ldots c_{g-1}}^{a_1 \ldots a_{g-1}})\\
& \times \sum_{n_1, \ldots, n_{g-1}=1}^d \Big[
 (\omega \cdot l_1)k_2^{(n_1)} \ldots k_{g}^{(n_{g-1})}   +  \ldots + (-1)^{g-1} (\omega \cdot l_g)  k_1^{(n_1)}\ldots  k_{g-1}^{(n_{g-1})} \Big] \\
 & \times  \sum_{\sigma_{\vec c} \in S_{g-1}} 
{\rm sign} \, \sigma_{\vec c} \, 
 W_{c_1}^{n_1} \ldots W_{c_{g-1}}^{n_{g-1}} \, .
\end{aligned}
\end{equation}
Again, the last line equals
$$
\sum_{\sigma_{\vec a} \in S_{g-1}}
{\rm sign }\,  \sigma_{\vec c} \, W_{c_1}^{n_1}\ldots W_{c_{g-1}}^{n_{g-1}}   = \det (W_{c_1 \ldots c_{g-1}}^{n_1 \ldots n_{g-1}})
$$
and the determinant is not zero only if the $n_i$ are permutation  $ \sigma_{\vec b} $ 
of some indices $1 \leq b_1 < \ldots < b_{g-1} \leq d$.
We obtain thus
\begin{equation}
\label{derW12}
\begin{aligned}
\eqref{derW11} =  \!\!
 & \sum_{1 \leq a_1 < \cdots < a_{g - 1} \leq d \atop
{1 \leq  c_1 < \ldots < c_{g-1} \leq d \atop 1 \leq b_1 < \ldots < b_{g-1} \leq d}}
 \det \big(\bl |  \bkn^{(a_1)} | \cdots | \bkn^{(a_{g-1})} \big) \det (W_{c_1 \ldots c_{g-1}}^{a_1 \ldots a_{g-1}})
\, 
\det (W_{c_1 \ldots c_{g-1}}^{b_1 \ldots b_{g-1}}) 
 \\
& \times \sum_{\sigma_{\vec b} \in S_{g-1}} \Big[
 (\omega \cdot l_1)k_2^{\sigma_{\vec b} (b_1)} \ldots k_{g}^{\sigma_{\vec b} (b_{g-1})}   +  \ldots + (-1)^{g-1} (\omega \cdot l_g)  k_1^{\sigma_{\vec b} (b_1)}\ldots  k_{g-1}^{\sigma_{\vec b} (b_{g-1})} \Big] \,  .
\end{aligned}
\end{equation}
Now, the expression in the last  line of \eqref{derW12} is nothing but the determinant of the matrix 
\begin{equation}\label{det-plus1}
\begin{pmatrix}
\omega \cdot l_1   &  k_1^{(b_1)} & \ldots &  k_1^{(b_{g-1})}   \\
\omega \cdot l_2   &  k_2^{(b_1)} & \ldots &  k_2^{(b_{g-1})}   \\
\vdots & \vdots & \ldots & \vdots   \\
\omega \cdot l_g & k_g^{(b_1)} & \ldots & k_g^{(b_{g-1})}
\end{pmatrix} \stackrel{ {\eqref{def:ell1}, \eqref{def:vco1}} } \equiv  \begin{pmatrix}
\bl |  \bkn^{(b_1)}|  \cdots | \bkn^{(b_{g-1})} 
\end{pmatrix}  
\end{equation}
as one sees easily by {a Laplace expansion of the determinant along the first column and recalling the} 
Leibnitz formula for the determinant. 
In conclusion we have proved that 
\begin{align*}
\eqref{derW12} & = 
\sum_{1 \leq a_1 < \cdots < a_{g - 1} \leq d \atop
{1 \leq  c_1 < \ldots < c_{g-1} \leq d \atop 1 \leq b_1 < \ldots < b_{g-1} \leq d}} 
 \det \big(\bl |  \bkn^{(a_1)} | \cdots | \bkn^{(a_{g-1})} \big) 
 \det (W_{c_1 \ldots c_{g-1}}^{a_1 \ldots a_{g-1}}) \\
& \qquad \qquad \qquad \ \ \times \det (W_{c_1 \ldots c_{g-1}}^{b_1 \ldots b_{g-1}}) \det \big(\bl |  \bkn^{(b_1)} | \cdots | \bkn^{(b_{g-1})} \big) \\
& = \sum_{1 \leq  c_1 < \ldots < c_{g-1} \leq d} 
\left(\sum_{ 1 \leq a_1 < \ldots < a_{g-1}\leq d}
\det (W_{c_1 \ldots c_{g-1}}^{a_1 \ldots a_{g-1}}) \, \det \big(\bl |  \bkn^{(a_1)} | \cdots | \bkn^{(a_{g-1})} \big)
 \right)^2 \, . 
\end{align*}
Now, \eqref{det2w}-\eqref{m.wave} follow  since 
$$
\det \big(\bl |  \bkn^{(a_1)} | \cdots | \bkn^{(a_{g-1})} \big) = \omega \cdot m_{a_1 \ldots a_{g-1}} = \lambda \bar \omega \cdot m_{a_1 \ldots a_{g-1}} 
$$
where $m_{a_1 \ldots a_{g-1}}$ is a vector in $\Z^n$ 
satisfying 
$$
\abs{m_{a_1 \ldots a_{g-1}}} \leq C(d) \max_{1 \leq i \leq g} |l_i|\, \max_{1 \leq i \leq g} |k_i|^{g-1} \leq 
C(d) (L^\varsigma \Gamma)^g  
$$
by \eqref{liki1}. 
\end{proof}

Formula  \eqref{detR}, Lemma \ref{lem:A.w}  and  \eqref{detASR1} imply \eqref{Pdef}.

\begin{lemma}\label{lem:setcG}
Consider  
$$
P_{p,m}(\lambda^2) 
:=  \norm{ \fC_g(W)  p }^2 
 - \lambda^2 \norm{ \fC_{g-1}(W)  (\bar \omega \otimes  m)  }^2
$$
where {$ p  \in \Z^{\binom{d}{g}} $} 
and 
$ m := (m_{\vec a}) \in \Z^{\binom{d}{g-1} n}$, $ \vec a = (a_1, \ldots, a_{g-1} ) $
with $ 1 \leq a_1 < \ldots < a_d \leq g $, and  
$ \bar \omega \otimes m 
:= ( \bar \omega \cdot  m_{\vec a})_{\vec a} \in \R^{\binom{d}{g-1} n}$.
Let $ |m|:= \max |m_{\vec a}| $. 
Assume that  $\bar \omega \in \R^n $ is Diophantine, according to  \eqref{omega1}. 
Then there exists $ \mathfrak c(\sL)>0$ such that  for $ \tau \geq \tau_1 (d,n, \tau_0) $ large enough, 
for any  $\gamma \in \big[0, \frac{1}{4} \mathfrak c(\sL) \big] $,
the set
{\begin{equation*}
\wt\Lambda := \left\lbrace 
\lambda \in \Lambda \,  
\colon \, 
|P_{p,m}(\lambda^2)| \geq \frac{\gamma}{1+ |m|^{\tau}} \, , \, \forall   (p,m) \neq (0,0) 
\right\rbrace
\end{equation*}}
has large measure, more precisely 
\begin{equation}
\label{measG}
\meas (\Lambda \setminus \wt\Lambda) \leq {C(\sL, \bar \omega, d, \gamma_0)} \gamma  \ .
\end{equation}
\end{lemma}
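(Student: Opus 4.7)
The plan is standard: decompose the complement of $\wt\Lambda$ into ``bad sets'' indexed by $(p,m)$, bound the measure of each, and sum over the indices using a sufficiently large exponent $\tau$. Write
\[
\Lambda\setminus\wt\Lambda \ \subseteq\ \bigcup_{(p,m)\neq(0,0)} B_{p,m}\,,\qquad
B_{p,m}:=\Bigl\{\lambda\in\Lambda\colon |P_{p,m}(\lambda^2)|<\tfrac{\gamma}{1+|m|^{\tau}}\Bigr\}\,.
\]

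\textbf{Case $m=0$, $p\neq 0$.} Since $\fC_g(W)$ is invertible (see \eqref{inv-comp}), there is $\mathfrak c(\sL)>0$ with $\|\fC_g(W)p\|^{2}\geq \mathfrak c(\sL)|p|^{2}\geq \mathfrak c(\sL)$ for every nonzero integer vector $p$. Hence $|P_{p,0}(\lambda^{2})|\geq \mathfrak c(\sL)\geq 4\gamma>\gamma$, so $B_{p,0}=\emptyset$ under the assumption $\gamma\leq \tfrac14\mathfrak c(\sL)$.

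\textbf{Case $m\neq 0$.} The map $\lambda\mapsto P_{p,m}(\lambda^{2})$ is a degree-$2$ polynomial in $\lambda$ with derivative $-2\lambda\,\|\fC_{g-1}(W)(\bar\omega\otimes m)\|^{2}$. Since $\fC_{g-1}(W)$ is invertible, $\|\fC_{g-1}(W)(\bar\omega\otimes m)\|^{2}\geq c(\sL)\|\bar\omega\otimes m\|^{2}$, and because $m\neq 0$ has at least one nonzero component $m_{\vec a}\in\Z^{n}\setminus\{0\}$, the Diophantine condition \eqref{omega1} gives
\[
\|\bar\omega\otimes m\|^{2}\ \geq\ \max_{\vec a}|\bar\omega\cdot m_{\vec a}|^{2}\ \geq\ \frac{4\gamma_{0}^{2}}{|m|^{2\tau_{0}}}\,.
\]
Combining with $\lambda\geq 1/2$, the classical sublevel estimate for an affine function with controlled slope yields
\[
\meas(B_{p,m})\ \leq\ C(\sL)\,\frac{\gamma/(1+|m|^{\tau})}{\gamma_{0}^{2}/|m|^{2\tau_{0}}}
\ =\ C(\sL,\gamma_{0})\,\gamma\,\frac{|m|^{2\tau_{0}}}{1+|m|^{\tau}}\,.
\]

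\textbf{Restricting the range of $p$.} For $\lambda\in B_{p,m}\subseteq[1/2,3/2]$, one has $\|\fC_g(W)p\|^{2}\leq \tfrac94\|\fC_{g-1}(W)(\bar\omega\otimes m)\|^{2}+\gamma$, which by the invertibility of $\fC_{g}(W)$ forces $|p|\leq C(\sL)(1+|m|)$. The number of integer vectors $p\in\Z^{\binom{d}{g}}$ in this range is at most $C(\sL,d)(1+|m|)^{\binom{d}{g}}$.

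\textbf{Summation.} Putting the pieces together,
\[
\meas(\Lambda\setminus\wt\Lambda)\ \leq\ \sum_{m\in\Z^{\binom{d}{g-1}n}\setminus\{0\}} C(\sL,d)(1+|m|)^{\binom{d}{g}}\,C(\sL,\gamma_{0})\,\gamma\,\frac{|m|^{2\tau_{0}}}{1+|m|^{\tau}}\,.
\]
Grouping integer vectors by $|m|=k\geq 1$ (which contributes a factor $k^{n\binom{d}{g-1}-1}$ lattice points) reduces the sum to $\sum_{k\geq 1}k^{\,\binom{d}{g}+n\binom{d}{g-1}-1+2\tau_{0}-\tau}$, which converges as soon as we fix
\[
\tau\ >\ \tau_{1}(d,n,\tau_{0})\ :=\ \binom{d}{g}+n\binom{d}{g-1}+2\tau_{0}+1\,,
\]
giving \eqref{measG} with a constant depending only on $\sL,\bar\omega,d$ and $\gamma_{0}$.

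The only genuinely delicate point is producing the lower bound on the slope $\|\fC_{g-1}(W)(\bar\omega\otimes m)\|^{2}$ in terms of $|m|$: here the invertibility of the compound matrix $\fC_{g-1}(W)$ (which is specific to flat tori and was already used in Lemma \ref{detA}) combines cleanly with the Diophantine condition on $\bar\omega$, so no additional quadratic Melnikov-type condition is needed. The remainder of the argument is the routine Borel--Cantelli style summation.
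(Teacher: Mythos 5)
Your proof is correct and follows the same overall strategy as the paper's (decompose the complement into bad sets indexed by $(p,m)$, bound each by a sublevel estimate, sum). The slicing into cases is slightly different: you split on $m=0$ vs.\ $m\neq0$ and invoke the Diophantine lower bound $\zeta_m:=\|\fC_{g-1}(W)(\bar\omega\otimes m)\|^2\gtrsim\gamma_0^2/|m|^{2\tau_0}$ whenever $m\neq 0$, which pushes $\tau_0$ into the exponent of $\tau_1$. The paper instead splits on $p\neq0$ vs.\ $p=0$ and exploits a slicker observation in the $p\neq 0$ case: if the bad set $\cR_{p,m}$ (in the variable $\xi=\lambda^2$) is nonempty, then $\eta_p=\|\fC_g(W)p\|^2\geq\mathfrak c(\sL)$ together with $|\eta_p-\xi\zeta_m|<\gamma\leq\tfrac14\mathfrak c(\sL)$ and $\xi\leq 9/4$ forces the \emph{constant} lower bound $\zeta_m\geq\mathfrak c(\sL)/3$, so the Diophantine condition is not needed at all there. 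For $p=0$ the paper notes that all $\cR_{0,m}$ lie in a single interval $(0,\gamma/(\gamma_0^2\mathfrak c(\sL))]$, so rather than summing measures one bounds the union directly. Both routes give \eqref{measG}; your version is a bit more uniform, while the paper's avoids wasting the Diophantine condition in the $p\neq0$ regime and, in principle, yields a smaller admissible $\tau_1$.
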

\begin{proof}
For $ p \in {\Z^{\binom{d}{g}}} $ and $ m  \in \Z^{\binom{d}{g-1} n}$, let 
$$
\eta_p:= \norm{  \fC_g(W)  p }^2 \, , \quad
\zeta_m(\bar \omega):=  \norm{ \fC_{g-1}(W)  (\bar \omega \otimes  m) }^2  \, , 
\quad  P_{p,m}(\xi) = \eta_p - \xi \zeta_m(\bar \omega) \, , \quad  \xi := \lambda^2 \, . 
$$
We have that  $\meas (\Lambda \setminus \wt\Lambda) \leq {C} \sum_{(p,m) \neq (0,0)} \meas (\cR_{p,m})$, where 
$$
\cR_{p,m} := \left\lbrace
\xi = \lambda^2 \in \left[ \frac{1}{4}, \frac94 \right] \ \colon  \ \ |P_{p,m}(\xi)| < \frac{\gamma }{1+ |m|^{\tau}}
\right\rbrace \, .  
$$
We distinguish two cases.
\\[1mm]
{\bf Case 1:} $p \neq 0$.
Therefore
$$
\eta_p  = \norm{ \fC_g(W) p }^2 \geq \frac{1}{\norm{\fC_g(W)^{-1}}^2} \norm{p}^2 
\geq \mathfrak c(\sL)  |p|^2  \geq \mathfrak c(\sL) \, .
$$
 If $\cR_{p,m} \neq \emptyset$ then, since 
 $|\xi| \leq 9/4$ and $\gamma \in \big[0, \frac{1}{4} \mathfrak c(\sL) \big]$, we deduce that 
{$$
|\zeta_m(\bar \omega)| \geq  \frac{\mathfrak c(\sL) }{3},  \ \ 
 |p|^2\leq 3 \frac{|\zeta_m( \bar \omega)|}{ \mathfrak c(\sL)} \leq C(\sL, \bar \omega, d)|m|^2
  \, , \ \
\meas (\cR_{p,m}) \leq  \frac{2\gamma }{1+ |m|^{\tau}} \frac{1}{|\zeta_m(\bar \omega)|} \, . 
$$}
Hence, for $ \tau := \tau (d,n) $ sufficiently large, 
\begin{equation} \label{measRpm}
\sum_{p \neq 0 , \,	m} \meas (\cR_{p,m})  = 
\sum_{0 < |p|\leq C |m| , \,  m } \meas (\cR_{p,m})  
\leq  C(\sL, \bar \omega, d, n) \gamma  \, . 
\end{equation}
{\bf Case 2:} $ p = 0 $. 
In this case $m \neq 0$ and, by the invertibility of the compound 
matrix $ \fC_{g-1}(W) $ and the diophantine condition \eqref{omega1}, we get
$$
\zeta_m(\bar \omega) \geq \mathfrak{c}_{g-1}(\sL)
\norm{\bar \omega \otimes m}^2 \geq 
\mathfrak{c}_{g-1}(\sL)  \abs{\bar \omega \otimes m}^2 \geq \mathfrak{c}_{g-1}(\sL) 
 \frac{\gamma_0^2}{|m|^{2\tau_0}} \, .
$$
 We deduce that,  
for $\tau \geq 2\tau_0 $, 
$$
\cR_{0, m} \subseteq \left( 0, \frac{\gamma}{1+|m|^{\tau}} \, \frac{| m |^{2 \tau_0}}{\gamma_0^2 \, \mathfrak c(\sL)} \right]
\subseteq \left(0,  \frac{\gamma}{\gamma_0^2 \, \mathfrak c(\sL)}\right] \,. 
$$
This inclusion and \eqref{measRpm} prove \eqref{measG}.
\end{proof}

\begin{proof}[Proof of Lemma \ref{lem:invW} $(i)$--$(iii)$]
Item $(i)$  follows combining  \eqref{detASR1} with \eqref{detR} and  Lemma \ref{lem:A.w}.
To prove item $(ii)$ it is sufficient to notice that $\det A_{\sL, \lambda}$ is not zero when evaluated at $\im \lambda$.
 Indeed the matrix $A_{\sL, \im \lambda}$ is the Gram matrix of the vectors $f_i$ with respect to the scalar product $\phi_1 + \phi_2$. 
 Being $f_1, \ldots, f_g$ linearly independent, 
 $\det A_{\sL, \im \lambda}>0 $. {In particular the integer vectors $ (p, m) \neq (0,0) $.}
Finally item $(iii)$  follows by Lemma \ref{lem:setcG} and the bound  \eqref{p.m.est} for $ |m_{\vec a}|$,  
choosing $ N_0 = \gamma^{-1} $  
and $ \tau_ 2 := g \tau_1  (d, n, \tau_0 )  $. 
\end{proof}

\section{Quasi-periodic solutions of Schr\"odinger equation}

It is convenient to consider the nonlinear Schr\"odinger equation in \eqref{NLWa} coupled with its complex conjugate equation, so we look for zeros of the  nonlinear operator 
\begin{equation}
\begin{aligned}
\label{NLSavector}
F(\epsilon, \lambda, \cdot) \colon H^{s+2}(\T^{n+d},\C) \times H^{s+2}(\T^{n+d}, \C) & \to
H^{s}(\T^{n+d},\C) \times H^{s}(\T^{n+d}, \C)
 \\
\begin{pmatrix} u^+ \\ u^- \end{pmatrix}  & \mapsto 
 D(\lambda)\begin{pmatrix} u^+ \\ u^- \end{pmatrix} - \epsilon f(u^+, u^-) \, , 
\end{aligned}
\end{equation}
where  $u^\pm$ are functions of the periodic variables $(\vf, x)\in \T^n \times \T^d$, $D(\lambda)$ is the 
differential  operator
 \begin{equation*}
D(\lambda) := \begin{pmatrix}
\im \lambda \bar \omega  \cdot \partial_\vf - \Delta_\sL + m & 0 \\
0 &
-\im \lambda \bar \omega \cdot \partial_\vf - \Delta_\sL + m
\end{pmatrix}  
\end{equation*}
and $f(u^+, u^-)$ is the nonlinearity
\begin{equation*}
 f(u^+, u^-) := \begin{pmatrix}
 \tf^+(\varphi, x, u^+, u^-)  \\ 
\tf^-(\varphi, x, u^+,u^-) 
\end{pmatrix} ;
\end{equation*}
here 
 $\tf^\pm( u, v)$  are two extensions (in the real sense) of $\tf(\vf, x, u)$ so that $\tf^+( u, \bar u) = \bar{\tf^-( u, \bar u)} = \tf(u)$ and $\partial_u \tf^+(u, \bar u) = \partial_v \tf^-(u, \bar u) \in \R$, 
 $
 \partial_{\bar u} \tf^+(u, \bar u) = \partial_{\bar u} \tf^-(u, \bar u) = \partial_{\bar v} \tf^+(u, \bar u) = 
 \partial_{\bar v} \tf^-(u, \bar u) = 0$ and
 $\partial_v \tf^+(u, \bar u) = \bar{\partial_u \tf^-(u, \bar u)} $, see e.g. \cite{BCP}. 

We look for zeros of $F$ in the subspace
$$
\cU := \left\lbrace  u =(u^+, u^-) \in H^s(\T^{n+d},\C) \times H^s(\T^{n+d},\C)  \ \ \colon \ \  u^+ = \bar{u^-} \right\rbrace \, .
$$ 
The set $\fK$ of \eqref{frakK} is then $\fK = \Z^n \times \Z^d \times \{1,-1\}$ and the scale of Hilbert spaces
\eqref{spazi} is  $H^{s}(\T^{n+d},\C) \times H^{s}(\T^{n+d}, \C)$ written in Fourier variables.

\smallskip
As in the application to NLW, the main difficulty is to verify a bound 
for the length of chains of singular sites of  $D(\lambda)$. We begin by
{specializing} Definitions \ref{gammachainabs}--\ref{def:sigmaKn} to the case of NLS.

First,  given $\Gamma \geq 2$, 
a sequence of distinct integer vectors 
$(\ell_q,  j_q, \fa_q)_{q=0,\ldots,  L} \subset \Z^n\times\Z^d\times\{1, -1\}$ is  a  
{\em $\Gamma$-chain} if
{ $ \max\{ \abs{\ell_{q+1} - \ell_q}, \,  
\abs{j_{q+1} - j_q}, \abs{\fa_{q+1} - \fa_q} \} \leq \Gamma $, $ \forall 0\leq q \leq L-1 $}. 
The number $L$ is called the {\em length} of the chain (see Definition \ref{gammachainabs}).

The operator 
 $D(\lambda)$ in the basis of exponentials  
\begin{equation}
\label{basisNLS}
e^{\im \ell \cdot \vf} \be_{j,\fa}(x) \, , \qquad 
\be_{j,\fa}(x):= \left\lbrace\begin{matrix}
(e^{\im j \cdot x}, 0) & \mbox{ if } \fa = +1 \\
( 0, e^{\im j \cdot x}) &  \mbox{ if } \fa = -1 
\end{matrix} \, , 
\right. 
\end{equation}
  is represented by the infinite dimensional matrix 
 $$
D(\lambda) := {\rm diag}_{{\ell,j, \fa}} \left(D_{\ell, j, \fa}(\lambda) \right) , \quad
D_{\ell, j, \fa}(\lambda) := -\fa \lambda \bar \omega \cdot \ell  + \mu_j + m \,  , \quad 
(\ell, j, \fa) \in \Z^n \times \Z^d \times \{1, -1\} \, , 
 $$
 where $\mu_j$ are defined in \eqref{muj}.
 For $\theta \in \R$ define also
 $$
D(\lambda, \theta) := {\rm diag} \left(D_{\ell, j, \fa}(\lambda, \theta) \right) , \qquad
D_{\ell, j, \fa}(\lambda, \theta) := -\fa (\lambda \bar \omega \cdot \ell + \theta)  + \mu_j + m \, .
 $$
Now recall that a site    $(\ell,  j , \fa) \in \Z^n \times \Z^d\times \{-1, 1\}$ is called {\em singular } if 
$\abs{D_{\ell,j, \fa}(\lambda, \theta)} < 1$ (see Definition \ref{def:ssn}). \\
For any $\Sigma \subseteq \Z^n \times \Z^d \times \{1,-1\}$ and $\wt \jmath \in \Z^d $, we denote
by $\Sigma^{\wt \jmath} $ the section of $\Sigma$ at fixed $ \wt \jmath$, namely
$ \Sigma^{\wt \jmath} : = \{ (\ell,\wt \jmath, \fa) \in \Sigma \} $.
Given $K > 1$,  denote by $\Sigma_K$ any subset of singular sites of $D(\lambda, \theta)$ such that the cardinality of the section $\Sigma^{\wt \jmath}$ satisfies $\sharp \Sigma^{\wt \jmath}_K \leq K$,  for any $\wt \jmath \in \Sigma$ (see Definition \ref{def:sigmaKn}).

The key  result is the following bound on the length of $\Gamma$-chains of singular sites.
\begin{proposition}[Separation of singular sites for NLS]
\label{thm:lcn}
There exists a constant $C(\sL, d, n)>0$ such that for all $\lambda \in \Lambda$, $\theta \in \R$ and for all $K>1, \Gamma \geq 2$, any $\Gamma$-chain of singular sites of  $D(\lambda, \theta)$
in $\Sigma_K$  has length $L \leq (K \Gamma )^{C(\sL,d,n)}$.
\end{proposition}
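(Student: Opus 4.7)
The plan is to exploit the fact that $\fa \in \{+1,-1\}$ is a binary label, reducing the length bound for a $\Gamma$-chain of singular sites of $D(\lambda,\theta)$ to the Laplacian clustering estimate of Proposition \ref{prop:lG}. As in the NLW argument, by approximating $\theta$ by $\omega \cdot \bar\ell$ with $\bar\ell \in \Z^n$ and translating the chain we may reduce to $\theta = 0$. Fix a $\Gamma$-chain $(\ell_q, j_q, \fa_q)_{q=0,\ldots,L}$ of singular sites and set $x_q := \lambda \bar\omega \cdot \ell_q$, so that $|\mu_{j_q} + m - \fa_q x_q| < 1$ for every $q$.

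The first key step analyzes indices where $\fa_q \neq \fa_{q+1}$: then $\fa_{q+1} = -\fa_q$, and summing the two singularity inequalities yields
\[
|\mu_{j_q} + \mu_{j_{q+1}} + 2m - \fa_q(x_q - x_{q+1})| < 2 .
\]
Since $|x_q - x_{q+1}| \leq \lambda |\bar\omega|_1 |\ell_q - \ell_{q+1}| \leq 3\Gamma/2$ by \eqref{omega} and the definition of a $\Gamma$-chain, and since $\mu_j \geq 0$ with $\mu_j \geq c(\sL)|j|^2$, both $|j_q|$ and $|j_{q+1}|$ are bounded by $C(\sL,m)\sqrt{\Gamma}$ whenever a sign change occurs. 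Thus sign changes may only happen at $j$ of controlled size.

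Next, decompose the chain into maximal runs of constant $\fa$. Within such a run, subtracting consecutive singularity inequalities gives $|\mu_{j_{q+1}} - \mu_{j_q}| \leq |x_q - x_{q+1}| + 2 \leq C\Gamma$, while the chain hypothesis gives $|j_{q+1}-j_q|\leq \Gamma$. The sequence of distinct $j$-values appearing in the run (in order of first appearance) is therefore a $C\Gamma$-chain in the sense of Definition \ref{1Gchain}, and Proposition \ref{prop:lG} bounds its cardinality by $C_2(\sL,d)(C\Gamma)^{C_1(d)}$. The section bound $\sharp \Sigma^{\wt\jmath}_K \leq K$ ensures that at most $K$ positions of the whole chain share a given $\wt\jmath$, so each run has length at most $K \cdot C(\sL,d)\,\Gamma^{C_1(d)}$.

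To finish, let $N$ be the number of runs. Each of the $N-1$ sign changes provides two distinct boundary triples with $|j| \leq C(\sL,m)\sqrt{\Gamma}$; the number of such $j \in \Z^d$ is at most $C(\sL,d,m)\Gamma^{d/2}$, and each admits at most $K$ admissible triples in $\Sigma_K$, so $N \leq 1 + K\cdot C(\sL,d,m)\Gamma^{d/2}$. Combining,
\[
L \leq N \cdot K \cdot C(\sL,d)\,\Gamma^{C_1(d)} \leq C(\sL,d,m)\, K^2\, \Gamma^{C_1(d)+d/2} \leq (K\Gamma)^{C(\sL,d,n)},
\]
as claimed. The main obstacle is the sign-change analysis: it rests critically on the positivity of $\mu_j$ and on the binary nature of $\fa$, and it is what allows the bound to hold for \emph{all} $\lambda \in \Lambda$ without any Diophantine excision (in contrast to NLW, where the signature of $Q_{\sL}$ forced the $\lambda$-dependent non-degeneracy condition of Lemma \ref{lem:invW}). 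All remaining estimates are direct consequences of Proposition \ref{prop:lG}.
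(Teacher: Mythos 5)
Your overall strategy --- reduce the NLS separation bound to the Laplacian clustering estimate of Proposition~\ref{prop:lG}, then use the $\Sigma_K$ section bound to pass from the number of distinct $j$-values to the chain length --- is the right idea, and your remark that the positivity of $\mu_j$ is precisely what makes the estimate hold for \emph{all} $\lambda\in\Lambda$ (in contrast to NLW) is exactly the point. But there is a genuine gap in the middle step.

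You assert that the distinct $j$-values appearing in a run of constant $\fa$, taken in order of first appearance, form a $C\Gamma$-chain in the sense of Definition~\ref{1Gchain}, and then invoke Proposition~\ref{prop:lG} as a black box. This is false. Within a run the pairs $(\ell_q,j_q)$ are distinct but the $j_q$ may repeat; if $q_1<q_2<\cdots<q_N$ are the first-appearance positions, then between $q_i$ and $q_{i+1}$ the chain may revisit many earlier $j$-values, so that $\abs{\Phi(j_{q_{i+1}})-\Phi(j_{q_i})}$ can be far larger than $\Gamma$. For instance, let the run march out along $j_0,j_1,\ldots,j_k$, retrace all the way back to $j_0$, and only then step to a new value adjacent to $j_0$: the jump in first-appearance order is of size $\sim k\Gamma$, and $k$ is not controlled. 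So the first-appearance sequence is not a chain, and Proposition~\ref{prop:lG} cannot be applied to it.

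What does work --- and is what the paper does in Section~\ref{sec41} --- is to rerun the \emph{proof} of Proposition~\ref{prop:lG}, not its statement. The projection and lattice-counting argument never uses distinctness of the $j_q$: it only uses the bounds $\abs{j_q-j_{q_0}}\le \abs{q-q_0}\Gamma$ and $\abs{\mu_{j_q}-\mu_{j_{q_0}}}\le C\abs{q-q_0}\Gamma$, which imply $\abs{\pla{j_{q_0}}{j_q-j_{q_0}}}\le C\abs{q-q_0}^2\Gamma^2$ (this is exactly Lemma~\ref{lem:bound_s}). One then selects a basis $f_i=j_{q_i}-j_{q_0}$ with $\abs{q_i-q_0}\le L^\varsigma$, uses Lemma~\ref{detA} and Cramer's rule to bound $\abs{P_G j_{q_0}}$, deduces that $\{j_q\}$ lies in a ball of radius $C(L^\varsigma\Gamma)^{2d+1}$, and thus bounds the number of \emph{distinct} $j$-values by $C(L^\varsigma\Gamma)^{C_1(d)}$; the $\Sigma_K$ bound then yields $L\le K\,C(L^\varsigma\Gamma)^{C_1(d)}$, which closes for $\varsigma$ small. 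Once you see this, your run decomposition and sign-change counting become unnecessary: whether or not $\fa_{q+1}=\fa_q$, adding or subtracting the two singularity inequalities and using $\mu_j\ge 0$ gives the uniform estimate $\abs{\mu_{j_{q+1}}-\mu_{j_q}}\le\abs{\mu_{j_{q+1}}\pm\mu_{j_q}}\le C\Gamma$, so a single pass over the whole chain suffices. Your observation that $\abs{j_q}, \abs{j_{q+1}}\le C(\sL,m)\sqrt{\Gamma}$ at a sign flip is correct, but it is not needed.
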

We postpone the proof to the next section \ref{sec41} and we first prove Theorem \ref{main2} for  NLS.

\paragraph{Verification of the assumptions of Berti-Corsi-Procesi theorem.}

First,  \eqref{D.est} is trivially satisfied with $\sigma = 2$. Moreover,   provided $f \in C^\tq$ for $\tq$ large enough and $s_0 > (d+n)/2$,  the tame estimates (f1)--(f2) hold true.

We verify now Hypothesis 1--3 {concerning 
the operator $ D(\lambda )+ \epsilon T(u) $,
 $$
 T(u) = \begin{pmatrix}
 p(\vf, x) & q(\vf, x) \\
 \bar{q(\vf, x)} & p(\vf, x)
 \end{pmatrix} , \qquad 
 \begin{array}{l}
 p(\vf, x):= - \partial_{u^+} \tf^+(\vf, x, u^+(\vf,x), u^-(\vf, x)) \\
 q(\vf, x) := - \partial_{u^-} \tf^-(\vf, x, u^+(\vf,x),u^-(\vf, x)) 
 \end{array} \, , 
 $$
obtained linearizing \eqref{NLSavector}
at a point  $(u^+, u^-) \in  \cU$.
By the Hamiltonian structure \eqref{tfham}, the conditions on $\tf^\pm$ and the request $(u^+, u^-) \in \cU$, 
{the function $p(\vf, x) $ is real  and $q(\vf, x) $ is complex valued}.
\\[1mm]
  \underline{Verification of Hypothesis 1.}
 The covariance property \eqref{2.24a} holds with
  $\fD_{j,\fa}(y) = - \fa y + \mu_j + m$.
The {multiplication} operator $T(u)$,  is represented, 
in the exponential basis \eqref{basisNLS},  by the  {T\"opliz} matrix
{$$
T_{(\ell,j,\fa)}^{(\ell', j', \fa)} = p(j-j', \ell - \ell') \mbox{ if } \fa = \pm 1 \, , 
\  
T_{(\ell,j, 1)}^{(\ell', j', -1)} = q(j-j', \ell - \ell') \, , 
\ 
T_{(\ell,j, -1)}^{(\ell', j', 1)} = \bar{ q(j-j', \ell - \ell')} \, ,
$$
 where $ p(j, \ell)$, $q(j, \ell)$ are the Fourier coefficients 
of the functions $p(\vf, x), q(\vf, x)$}. 
 Then  \eqref{2.24b} holds and \eqref{2.24c}-\eqref{2.24d} hold  with $\sigma_0 = 0$.
 \\[1mm]
 \underline{Verification of Hypothesis 2.}
 By a direct computation Hypothesis 2 is met with $\mathfrak{n} = 2$. 
\\[1mm]
 \underline{Verification of Hypothesis 3.} It is the content of Proposition \ref{thm:lcn}.

\smallskip

The verification of the measure estimates \eqref{meas.bad1}  follows exactly as in \cite{bebo13}, and we omit it.
{Applying Theorem \ref{BCP1} we prove Theorem \ref{main2} for  NLS}.

\subsection{Proof of Proposition \ref{thm:lcn}}\label{sec41}

Let $(\ell_q, j_q, \fa_q)_{q=0,\ldots, L}$ be a $\Gamma$-chain
 of singular sites for $D(\lambda,\theta)$; in particular
 \begin{equation}
 \label{ffd}
 \max_{q = 0, \ldots, L-1} \left\lbrace |\ell_{q+1} - \ell_q| ,  \ 
 |j_{q+1} - j_q| \right\rbrace \leq \Gamma \, . 
 \end{equation}
As in Section \ref{sec:L}, we introduce the quadratic form and the associated scalar product 
\begin{equation*}
 \nla{y} := \norm{W y}^2   \, , \qquad 
 \pla{y}{ y'} = \la W y,  W y'  \ra_{\R^d} \, , \qquad y, y' \in \R^d \, .
 \end{equation*}  
We have the following lemma. 
\begin{lemma}
\label{lem:bound_s}
For all $ q_0, q \in \{0,\ldots, L\}$ we have
\begin{equation}
\label{bound_nls}
\abs{\pla{j_{q_0}}{ j_q - j_{q_0}} } \leq C(\sL,  d, n) |q-q_0|^2 \Gamma^2 \, .
\end{equation}
\end{lemma}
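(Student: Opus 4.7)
The plan is to mimic the strategy behind Lemma \ref{lem:bound_wave} for NLW, but with the essential simplification that for NLS the quadratic form naturally in play is just $\nla{\cdot}$ on the $j$-components (since the singularity condition depends \emph{linearly} on the time frequency $\tilde x_p := \lambda \bar\omega \cdot \ell_p + \theta$), so no $\phi_\sL$ in $\R^{d+1}$ is required. The starting point is the polarisation identity
\begin{equation*}
\mu_{j_q} - \mu_{j_{q_0}} = 2\pla{j_{q_0}}{j_q - j_{q_0}} + \nla{j_q - j_{q_0}},
\end{equation*}
which rearranges to
\begin{equation*}
2\pla{j_{q_0}}{j_q - j_{q_0}} = (\mu_{j_q} - \mu_{j_{q_0}}) - \nla{j_q - j_{q_0}} \, .
\end{equation*}
Since $(\ell_q,j_q,\fa_q)_{q=0,\ldots,L}$ is a $\Gamma$-chain, \eqref{ffd} gives $|j_q - j_{q_0}| \leq |q-q_0|\Gamma$, whence $\nla{j_q - j_{q_0}} \leq C(\sL)|q-q_0|^2\Gamma^2$; so it suffices to prove
\begin{equation*}
|\mu_{j_q} - \mu_{j_{q_0}}| \leq C(\sL,d,n)\,|q-q_0|\,\Gamma \, ,
\end{equation*}
which I would derive by telescoping $\mu_{j_q}-\mu_{j_{q_0}}=\sum_{p=q_0}^{q-1}(\mu_{j_{p+1}}-\mu_{j_p})$ and controlling each consecutive gap by $C\Gamma$.

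The control of the consecutive gap is where the real work happens, and the obstacle is the sign $\fa_p \in \{\pm 1\}$. From the singularity of $(\ell_p,j_p,\fa_p)$ and $(\ell_{p+1},j_{p+1},\fa_{p+1})$ one has
\begin{equation*}
\mu_{j_p} + m = \fa_p \tilde x_p + O(1), \qquad \mu_{j_{p+1}} + m = \fa_{p+1}\tilde x_{p+1} + O(1),
\end{equation*}
and I would split into two cases:

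\textbf{Case A: $\fa_{p+1}=\fa_p$.} Subtracting the two identities and using $\tilde x_{p+1} - \tilde x_p = \lambda\bar\omega\cdot(\ell_{p+1}-\ell_p)$ together with \eqref{omega} and \eqref{ffd} yields directly
\[ |\mu_{j_{p+1}} - \mu_{j_p}| \leq |\lambda|\,|\bar\omega|_1\,|\ell_{p+1}-\ell_p| + O(1) \leq C\Gamma. \]

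\textbf{Case B: $\fa_{p+1}=-\fa_p$} (the main point). Here subtracting fails because the time frequencies do not cancel. Instead I \emph{add} the two singularity relations:
\begin{equation*}
\mu_{j_p} + \mu_{j_{p+1}} + 2m = \fa_p(\tilde x_p - \tilde x_{p+1}) + O(1) = \fa_p \lambda \bar\omega\cdot(\ell_p - \ell_{p+1}) + O(1),
\end{equation*}
whose right-hand side is $O(\Gamma)$ by \eqref{omega} and \eqref{ffd}. Since $\mu_{j_p},\mu_{j_{p+1}}\geq 0$, this forces the absolute bound $\mu_{j_p},\mu_{j_{p+1}} \leq C\Gamma$ (with constant depending on the fixed mass $m$), whence $|\mu_{j_{p+1}} - \mu_{j_p}| \leq C\Gamma$ in Case B as well.

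Collecting the two cases and summing over $q_0\leq p \leq q-1$ gives $|\mu_{j_q} - \mu_{j_{q_0}}| \leq C|q-q_0|\Gamma$, and substituting back into the polarisation identity delivers
\[ |\pla{j_{q_0}}{j_q - j_{q_0}}| \leq \tfrac{1}{2}\bigl(C|q-q_0|\Gamma + C(\sL)|q-q_0|^2\Gamma^2\bigr) \leq C(\sL,d,n)\,|q-q_0|^2\Gamma^2, \]
as claimed. The main obstacle is the sign-change Case B: only by simultaneously exploiting both singularity conditions at $p$ and $p+1$ (and using the positivity $\mu_j\geq 0$) does one recover an \emph{absolute} bound on each $\mu_{j_p}$, which then upgrades to the desired \emph{relative} bound; this is what ultimately allows the chain-length control without any Diophantine condition on $\lambda$, in contrast with the NLW case treated in Lemma \ref{lem:invW}.
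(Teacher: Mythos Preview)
Your proof is correct and follows essentially the same route as the paper: polarise, telescope the consecutive differences $\mu_{j_{p+1}}-\mu_{j_p}$, and combine the two singularity conditions at $p,p+1$ (adding or subtracting according to the signs $\fa_p,\fa_{p+1}$) to get a $\theta$-independent bound of size $C\Gamma$. The only cosmetic difference is that where you split into Cases A/B and in Case B bound each $\mu_{j_p}$ absolutely, the paper compresses this into the single observation $|\mu_{j_{p+1}}-\mu_{j_p}|\leq |\mu_{j_{p+1}}+\mu_{j_p}|$ (valid since $\mu_j\geq 0$), so that whichever combination of signs occurs the difference bound follows immediately.
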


\begin{proof}
By the definition of singular sites, for all $  q \in \{0,\ldots, L\}$,  
$$
\begin{cases}
\abs{ \mu_{j_q} - \omega \cdot \ell_q   + m - \theta } < 1 & \mbox{ if } \fa_q = + 1 \, , \\
\abs{ \mu_{j_q} +\omega \cdot \ell_q   + m +\theta} < 1 & \mbox{ if } \fa_q = -1 \, ,
\end{cases}
$$
which leads to  one of the following $ \theta$-independent inequalities
$$
\abs{\pm \omega \cdot(\ell_{q+1} - \ell_q) + \mu_{j_{q+1}} \pm \mu_{j_q} }
\leq 2 (|m|+1) \, . 
$$
By \eqref{ffd} we deduce
$$
\abs{\mu_{j_{q+1}} \pm \mu_{j_q}} \leq 2 (|m|+1) + n|\omega| \Gamma \leq C_1 \Gamma 
$$
for some $C_1 := C_1(m, n)$.
Since $\abs{\mu_{j_{q+1}} - \mu_{j_q}} \leq \abs{\mu_{j_{q+1}} + \mu_{j_q}}$, in any case we obtain the bound
$$
\abs{\mu_{j_{q+1}} - \mu_{j_q}} \leq C_1 \Gamma \, , \quad \forall q \in \{0,\ldots, L\} \, .
$$
Therefore, for any $q, q_0 \in \{0,\ldots,  L\}$ we get 
$$
\abs{\nla{j_q} -  \nla{ j_{q_0}} }  = \abs{\mu_{j_{q}} - \mu_{j_{q_0}}} \leq C_1 |q-q_0| \, \Gamma \, . 
$$
Since
$\nla{j_q} = \nla{ j_{q_0}}  + 2 \pla{j_{q_0}}{ j_q - j_{q_0}} + 
\nla{ j_q - j_{q_0}}$, we deduce
\begin{align*}
\abs{\pla{j_{q_0}}{j_q - j_{q_0}}} \leq  \big| \nla{j_q} -  \nla{ j_{q_0}} \big|  + 
\nla{j_q-j_{q_0}} 
\leq C(\sL,  d, n) \Gamma^2 |q-q_0|^2  
\end{align*}
which is \eqref{bound_nls}. 
\end{proof}

At this point the proof closely follows that in Section \ref{sec:L} and we shall be short. 
Consider the subspace 
$$
\begin{aligned}
G &:= {\rm span }_\R \la j_q - j_{q'} \, , \  0 \leq q, q' \leq L \ra 
=   {\rm span }_\R \la j_q - j_{q_0} \, , \  q = 0, \ldots, L \ra \, , \ \  \forall q_0 = 0, \ldots, L \, , 
\end{aligned}
$$
with dimension $ g := \dim G$, $ 1 \leq g \leq d$. Decompose
$\R^d = G \oplus G^{\bot \pla{\cdot}{\cdot}} $ as in \eqref{GGbot}, 
and let   $P_G $ 
denote the orthogonal projector  on 
$G$. 
As in the proof of Proposition \ref{prop:lG}, in order  to bound the norm of each 
 $P_G j_{q_0} $ we consider two cases.
\\[1mm]
{\em Case 1.} For any  $ q_0 \in \{0, \ldots, L\}$, it results that  
$ {\rm span }_\R \la j_q - j_{q_0} \, ,  \ \ \abs{q-q_0} \leq L^\varsigma \ra = G $ with $  \varsigma  := [3 (d+2)d]^{-1} $.
Then we select a basis $f_1, \ldots, f_g$ of $ G $ 
extracted from $j_q-j_{q_0}$, $ \abs{q-q_0} \leq L^\varsigma $.
Proceeding as in the proof of Proposition 
\ref{prop:lG} and using estimate \eqref{bound_nls} in place of \eqref{G.prop}, one proves the bound
$$
 \sharp \left\lbrace  \, j_q : 0 \leq q \leq L \right\rbrace
\leq C(\sL,d,n) (L^\varsigma \Gamma)^{(d+2)d} . 
$$
By the assumptions of Proposition \ref{thm:lcn}, the $\Gamma$-chain belongs to $\Sigma_K$, thus the  cardinality of the set of singular sites in the $ \Gamma$-chain with fixed $ j_{q_0} $ is bounded by $ K $, namely
$$
\sharp{ \{ (\ell_q, j_q, \fa_q)_{q = 0, \ldots ,L} \colon  \ j_q = j_{q_0} \} } \leq K \, ,
$$
and therefore we get that 
$$
L \leq C(\sL,d,n) (L^\varsigma \Gamma)^{(d+2)d} K \, .
$$
Since $ \varsigma  (d+2)d \leq 1 / 2 $ we finally deduce that $L \leq (K \Gamma)^{c_1(\sL,d, n)}$.
 \\[1mm]
{\em Case 2.}  $\exists q_0 \in \{0, \ldots, L \} $ for which
$g_1 := \dim {\rm span }_\R \la j_q - j_{q_0} \, ,  \ \ \abs{q-q_0} \leq L^\varsigma \ra \leq g-1 .
$
Then one argues as in Case 2 of Proposition \ref{prop:lG}, 
obtaining a bound of the form $L \leq (K \Gamma)^{C(\sL,d, n)}$ as claimed,
completing  the proof of Proposition \ref{thm:lcn}.

\appendix

\section{Berti-Corsi-Procesi abstract Nash-Moser theorem }
\label{app:BCP}
We state here  Berti-Corsi-Procesi abstract theorem \cite{BCP} specified to the case when the index of  the space-Fourier component runs all over  $\Z^d$ (which is the situation in case the spatial variable $x$ belongs to $\T^d$). Consider  a scale of Hilbert sequence spaces defined in the following way. Define first the  index set
\begin{equation}\label{frakK}
\fK := \Z^n\times \Z^d \times \fA \ni (\ell, j, \fa) = k
\end{equation}
where the set  $\fA=\{1\}$ (for NLW) or 
$\fA=\{1,-1\}$ (for NLS). 
If $\fA =\{1\} $ we  simply write $k=(\ell , j)$.   
For $k= (\ell, j, \fa)$, $k' = (\ell', j', \fa') \in \fK$ we set
$$
{\rm dist}(k, k') := 
\begin{cases}
1 & \mbox{ if } \ell = \ell', j = j', \fa \neq \fa' \\
\max\{ |\ell - \ell'|, |j - j'|\} & \mbox{otherwise} \, 
\end{cases} ;
$$
here $|\ell|  := \max \{|\ell_1|, \cdots, |\ell_n|\}$,  $|j| := \max \{|j_1|, \cdots, |j_d|\}$.
 For any  $ s \geq 0 $, define the Sobolev space 
\begin{equation}\label{spazi}
H^{s} := H^{s}({\mathfrak K} ):=\Big\{u=\!\!
\{u_{k}\}_{k\in {\mathfrak K}} \, , \ u_k  \in \C   \,:\, 
\|u\|^2_{s}:= 
\sum_{k\in{\mathfrak K}} \langle w_k\rangle ^{2s}
|u_{k}|^{2}<\infty \Big\}
\end{equation}
where 
the weights $\langle w_k \rangle := \max(1, |\ell|, |j|  )$.

A bounded linear operator $M :H^s \to H^s$ is represented by an infinite dimensional matrix $(M_{k}^{k'})_{k,k' \in \fK}$.
We define now a norm on such operators.
\begin{definition}[$s$-decay norm] We say that a linear operator $M$ has finite $s$-decay norm if
$$
\abs{M}_s^2 := \sum_{(\ell, j) \in \Z^n \times \Z^d}  \max( 1, |\ell|, |j|  )^{2s}
\sup_{\ell_1 - \ell_2 = \ell \atop j_1 - j_2 = j}
\norm{M_{\{\ell_1,j_1\}}^{\{\ell_2, j_2\}}}_0 
 < \infty 
$$
where 
$ M_{\{\ell_1,j_1\}}^{\{\ell_2, j_2\}} := \left\lbrace M_{(\ell_1,j_1, \fa_1)}^{(\ell_2, j_2, \fa_2)}\right\rbrace_{\fa, \fa' \in \fA}  $
and $\norm{ \cdot}_0$ is the operatorial norm.
\end{definition}
Consider a nonlinear operator $F(\epsilon, \lambda, \cdot)\colon H^{s+\sigma} \to H^s$ of the form
\begin{equation}
\label{aop}
F(\epsilon, \lambda, u) = D(\lambda) u + \epsilon f(u)
\end{equation}
where $\epsilon>0$ is a small parameter, $\lambda \in \Lambda \subset [1/2, 3/2]$ and $D(\lambda)$ is a linear  diagonal operator $D(\lambda) : H^{s+\sigma} \to H^s$ fulfilling  
\begin{equation}
\label{D.est}
\norm{D(\lambda)h}_s , \  \norm{\partial_\lambda D(\lambda) h }_s  \leq C(s)  \norm{h}_{s + \sigma} . 
\end{equation}
The nonlinearity $f$ is assumed to be at least of class  $ C^2(B_1^{s_0}, H^{s_0})$ for some  $s_0>(d+ n)/2$, where $B_1^{s_0}$ is the ball of center zero and radius 1 in $H^{s_0}$.
We assume that the following tame estimates hold: given $ S' > s_0 $, for all $s \in[s_0, S')$ there exists a constant $C(s)>0$  such that for any 
$\|u\|_{s_0} \le2 $, 
\begin{itemize} 
\item[(f1)] $ \| \di f(u)[h]\|_{s}\le C(s) \big( \|u\|_{s} \|h\|_{s_0} +  \|h\|_{s} \big) $,

\item[(f2)] $\| \di^{2} f(u)[h,v]\|_{s} \le C(s)\Big(
\|u\|_{s}\|h\|_{s_0}\|v\|_{s_0}+\|h\|_{s}\|v\|_{s_0}
+\|h\|_{s_0}\|v\|_{s}\Big)$.
\end{itemize}
The main assumptions are on the operator which is obtained by linearizing \eqref{aop} at a point $u \in H^s$: we denote it by
\begin{equation}
\label{L.def}
L = L(\lambda, \epsilon, u) := D(\lambda) + \epsilon T(u)
\end{equation}
where $T(u) := \di f(u)$. 
\\[1mm]
\underline{Hypothesis 1.} Let $\bar \omega \in \R^d$ satisfy \eqref{omega1}. There exist a function
$\fD \colon \Z^d \times \fA \times \R \to \C$ and $\sigma_0 >0$ such that for all $ \|u\|_{s_0}, \|u'\|_{s_0}\le 2$ and $ s_0+\sigma_0 < s < S' $, one has 
\begin{align}
\label{2.24a}
&\mbox{(Covariance)} \qquad  D_{(\ell, j,  \fa)}(\lambda) = \fD_{(j, \fa)}(\lambda \bar\omega \cdot \ell), \qquad \forall \lambda \in \Lambda \, , \\
\label{2.24b}
&\mbox{(T\"oplitz in time)} \qquad T_{(\ell, j,  \fa)}^{(\ell', j', \fa')} = T_{(j, a)}^{(j', \fa')}(\ell - \ell')\, , \\
\label{2.24c}
&\mbox{(Off diagonal decay)} \qquad \vert T(u)\vert_{s-\sigma_0}\le C(s)(1+ \|u\|_{s}) \, ,   \\
\label{2.24d}
&\mbox{(Lipschitz)} \qquad  \vert T(u)-T(u')\vert_{s-\sigma_0} \le C(s)
\big( \|u-u'\|_s +(\|u\|_s + \|u'\|_s)\|u-u'\|_{s_0} \big)\, .
\end{align}
In order to state the next hypothesis, we define for any $\theta\in\R$ the infinite matrices
\begin{align*}
& D(\lambda,\theta) := {\rm Diag}(D_{(\ell, j,  \fa)}(\lambda,\theta)) \, , 
\quad
 D_{(\ell, j ,\fa)}(\lambda,\theta):= 
 \fD_{(j,\fa)}(\bar \omega \cdot \ell +\theta) \, , \\
& L(\epsilon,\lambda,\theta,u):= D(\lambda,\theta)+\epsilon T(u)\, .  
\end{align*}
\noindent
\underline{Hypothesis 2.}
There is $ \mathfrak n \in \N $ such that for all $\tau_1 > 1$, $N > 1 $, $\lambda\in \Lambda$, $(\ell, j, \fa) \in \fK$
the set
\begin{equation}\label{meas.diag}
\big\{\theta \in \R\;:\; |D_{(\ell,j,\fa)}(\lambda,\theta)|\le N^{-\tau_{1}}  \big\} 
\subseteq \bigcup_{q=1}^{\mathfrak n} I_{q} \quad \mbox{intervals with }  {\rm meas }(I_q) \le  N^{-\tau_1} \, . 
\end{equation}
The last hypothesis that is needed  concerns separation properties of  clusters of singular sites. To state it, we need some preliminary definitions.

\begin{definition}[$\Gamma$-chain] 
\label{gammachainabs}
Given $\Gamma \geq 2$, a sequence 
of distinct integer vectors  $(k_q)_{q=0,\ldots,  L} \subset\fK$  is  a 
{\em $\Gamma$-chain} if
$$
{\rm dist}(k_{q+1}, k_q) \leq \Gamma \, , \qquad \forall 0\leq q \leq L-1 \, . 
$$ 
The number $L$ is called the {\em length} of the chain.
\end{definition}

\begin{definition}[Singular sites] 
\label{def:ssn}
We say that $k \in \fK$ is a singular site  for the matrix $D:= {\rm Diag}(D_{k})$ if
$\abs{D_{k}} < 1$.
\end{definition}
For any $\Sigma \subseteq \fK$ and $\wt \jmath \in \Z^d $, we denote
by $\Sigma^{\wt \jmath} $ the section of $\Sigma$ at fixed $ \wt \jmath$, namely
$$
\Sigma^{\wt \jmath} : = \{ k=( \ell,\wt \jmath, \fa) \in \Sigma \} \, . 
$$
\begin{definition}
\label{def:sigmaKn}
Let $\theta, \lambda$ be fixed and $K > 1$. We denote by $\Sigma_K$ any subset of singular sites of $D(\lambda, \theta)$ such that the cardinality of the sections $\Sigma^{\wt \jmath}$ satisfies $\sharp \Sigma^{\wt \jmath}_K \leq K$,  for any $\wt \jmath \in \Sigma$.
\end{definition}

\noindent \underline{Hypothesis 3.}
There exist a constant $C(\sL, d,n)>0$ and for any $N_0 \geq 2$, a set $\wt\Lambda	 = \wt\Lambda(N_0)$ such that for all $\lambda \in \wt\Lambda$, $\theta \in \R$ and for all $K>1, \Gamma \geq 2$ with $\Gamma K \geq N_0$, any $\Gamma$-chain of singular sites  of $D(\lambda, \theta)$ in $\Sigma_K$ (as in Definition \ref{def:sigmaKn}) has length $L \leq (K \Gamma )^{C(\sL, d,n)}$.
\\[1mm]
Given a family of matrices $L(\theta)$ parametrized by a parameter $\theta\in\R$ and $N>1$, 
for any $k=(\ell,j,\fa)\in\fK$ we denote by $L_{N,\ell,j}(\theta)$  the 
sub-matrix of $L(\theta)$ centered at $ (\ell,j) $, i.e.
$$
L_{N,\ell,j}(\theta):= \big\{ L_{k}^{k'}(\theta): \; {\rm dist}(k,k')\leq N \big\} \, .
$$
 For $\tau >0$, $N_0 \geq 1$, set
\begin{equation}
\label{barI}
\bar\Lambda := \bar\Lambda(N_0, \tau) := \left\lbrace
\lambda \in \Lambda \colon \abs{D_k(\lambda)} \geq N_0^{-\tau} \, , \  \forall k = (\ell, j, \fa) \in \fK \colon
\max\{ |\ell|, |j| \} \leq N_0 \right\rbrace \, . 
\end{equation}

\begin{theorem}[Berti-Corsi-Procesi]
\label{BCP1}
Let $ \fe > d + n + 1  $. 
Assume that  $ F $ in \eqref{aop}  satisfies \eqref{D.est}, (f1)--(f2) and
Hypotheses 1--3
with $ S' $ large enough, depending on $ \fe $.
Then, there are  $\tau_1>1$, $\bar{ N}_0 \in \N $,  
 $ s_1 $, $ S \in (s_0 + \sigma_0, S'-\sigma_0) $  with $s_1<S$ (all depending on $\fe$)  and  
 $ c(S) >  0 $ such that for all $N_0\ge \bar{N}_0 $, if the smallness condition 
\begin{equation}\label{epN0}
 \epsilon  N_0^{S} < c(S)
 \end{equation} 
holds, then the following holds: 
\begin{enumerate}
\item
{\bf (Existence)} 
There exist  
 a function $ u_\epsilon \in C^{1}(\Lambda, H^{s_{1}+\sigma})$  with $u_0(\lambda)=0$, 
and 
a set $  \cC_{\epsilon} \subset \Lambda  $ (defined  only in terms of $ u_\epsilon $), 
such that,  for all $ \lambda\in \cC_{\epsilon}   $, we have 
$ F(\epsilon,\lambda,u_\epsilon (\lambda))=0 $. 
\item
{\bf (Measure estimate)}
Let $ N_0 = [\e^{-1/(S+1)}] $ with $ \e $ small enough so that \eqref{epN0} holds. 
Assume 
that for all $N\geq N_0 $, 
\begin{equation}\label{meas.bad1}
{\rm meas}(\Lambda \setminus {\bar \cG}_{N}^{0} ), \  {\rm meas}(\Lambda \setminus {\bar \fG}_{N} ) 
= O( N^{-1} )  \, , \quad 
 {\rm meas}(\Lambda \setminus (\bar \Lambda\cap\wt\Lambda)) = O (N_0^{- 1 }) \, ,
\end{equation}
where $ \wt\Lambda=\wt\Lambda(N_0)$
 is defined in Hypothesis 3, $ \bar \Lambda  $
in \eqref{barI}, 
and, 
for all $ N \in \N $, 
$$
\bar \fG_{N}:= \Big\{\lambda \in \Lambda \;:\; \|L^{-1}_{N,0,0}(\epsilon,\lambda,u_\epsilon (\lambda))\|_{0}\leq
 N^{\tau_1}/2 \Big\} \, , 
$$
$$
 \begin{aligned}
 \bar \cG^0_{N} & :=\Big\{ \lambda \in\Lambda\;:\;
 \forall  j_{0} \in \Z^d  \mbox{ there is a covering } \\
& \qquad  \bar B^0_{N} (j_{0},\e,\lambda)\subset \bigcup_{q=1}^{N^{\fe}}I_{q},
 \mbox{ with }I_{q}=I_{q}(j_{0})\mbox{ intervals with }
 {\rm meas}(I_{q})\le  N^{-\tau_1}
 \Big\}
  \end{aligned}
$$
 where
 $$
\bar B^0_{N}(j_{0},\e,\lambda):= \Big\{\theta\in\R\;:\;
\|L_{N,0,j_{0}}^{-1}(\e,\lambda,\theta, u_\e(\lambda))\|_0>N^{\tau_1}/2\Big\} \, . 
$$
 Then 
 $\mathcal C_\e$ satisfies, for some $ \mathtt K > 0 $,   the measure estimate 
$ {\meas}(\Lambda \setminus \mathcal C_\e ) \le \mathtt K \e^{1/(S+1)} $. 
\end{enumerate}
 \end{theorem}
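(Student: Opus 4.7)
The plan is to prove Theorem \ref{BCP1} by a Nash--Moser iterative scheme coupled with a multiscale analysis for inverting the linearized operator, following the strategy pioneered by Bourgain \cite{Bou05} and refined in \cite{BCP,bebo12}. First I would set up a modified Newton iteration $u_{n+1} = u_n - \Phi_n \,F(\epsilon,\lambda,u_n)$, with $u_0 = 0$ and where $\Phi_n$ is an approximate right inverse of the linearized operator $L_n := L(\epsilon,\lambda,u_n)$ truncated to the Fourier modes with $\max\{|\ell|,|j|\} \leq N_n$, along geometrically growing scales $N_n = N_0^{\chi^n}$, $\chi \in (1,2)$. The super-exponentially fast convergence in some Sobolev norm $\|\cdot\|_{s_1}$ is obtained from the tame estimates (f1)--(f2), bound \eqref{D.est}, the smoothing provided by the Fourier truncation, and the a priori bound $|\Phi_n|_{s_1} \leq N_n^{\tau_1}$ established in the multiscale step.

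The heart of the argument, and the main obstacle, is the multiscale construction of $\Phi_n$. I would proceed by induction on a scale parameter $N$: assuming sharp invertibility estimates on sub-blocks of size $M = N^{1/\chi}$ centered at arbitrary Fourier indices, split the index set of the block of size $N$ into a \emph{regular} part $\mathcal{R}$, where $|D_k(\lambda)| \geq N^{-\tau}$, and a \emph{singular} part $\mathcal{S}$, where this fails. The regular block is inverted by a Neumann series using the off-diagonal decay \eqref{2.24c} of $T(u)$. The singular block is decomposed into connected components of $\Gamma$-chains (with $\Gamma$ slightly larger than $1$); by Hypothesis 3, each such cluster of singular sites has length bounded by $(K\Gamma)^{C(\sL,d,n)}$ and, in particular, has diameter much smaller than $N$, so it can be inverted by the inductive hypothesis at scale $M$, provided the finite-dimensional block determinant is not too small. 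A resolvent identity then glues the two inversions, yielding off-diagonal decay of $\Phi_n$ controlled via Cramer's rule.

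For the measure estimate I would define $\cC_\epsilon$ as the set of $\lambda \in \bar\Lambda \cap \wt\Lambda$ for which, at every scale $N_n$, the matrices $L_{N_n,\ell_0,j_0}(\epsilon,\lambda,u_\epsilon(\lambda))$ satisfy the required operator norm bound $\leq N_n^{\tau_1}/2$ for all $(\ell_0,j_0)$. The covariance \eqref{2.24a} reduces the control at an arbitrary $(\ell_0,j_0)$ to the control of $L_{N_n,0,j_0}(\epsilon,\lambda,\theta,\cdot)$ for the translated parameter $\theta = \lambda\bar\omega\cdot\ell_0$, and the exceptional $\theta$-set is covered by $N_n^{\fe}$ intervals of measure $N_n^{-\tau_1}$ thanks to hypothesis \eqref{meas.bad1}. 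Summing the bad contributions over $N \geq N_0$ using \eqref{meas.bad1} gives a total measure $O(N_0^{-1})$; choosing $N_0 = [\epsilon^{-1/(S+1)}]$ balances the smallness requirement \eqref{epN0} against this measure bound and yields $\meas(\Lambda\setminus\cC_\epsilon) \leq \mathtt K\,\epsilon^{1/(S+1)}$.

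The main technical difficulty lies in the multiscale inductive step, specifically in keeping only polynomial losses $N^{\tau_1}$ (and not exponential ones) when combining the regular and singular inversions. This is exactly where Hypothesis 3, which bounds the length of $\Gamma$-chains of singular sites polynomially in $K\Gamma$, is indispensable: without it, connected components of singular sites could percolate across the block, preventing a local inversion and destroying the off-diagonal decay needed to close the Nash--Moser iteration.
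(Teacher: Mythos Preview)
The paper does not prove Theorem \ref{BCP1} at all: it is merely \emph{stated} in Appendix \ref{app:BCP} as the abstract Nash--Moser result of \cite{BCP}, specialized to the torus index set $\fK = \Z^n\times\Z^d\times\fA$, and the paper then applies it as a black box to NLW and NLS. So there is no ``paper's own proof'' to compare your proposal against.

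That said, your outline is a reasonable high-level summary of the strategy actually carried out in \cite{BCP} (and in \cite{bebo12,bebo13}): a Nash--Moser iteration with truncated approximate inverses built by a multiscale induction, in which Hypothesis~3 controls the geometry of singular sites and the Cantor set $\cC_\epsilon$ is estimated via covariance \eqref{2.24a} and the complexity bound on bad $\theta$-intervals. One point you gloss over, which is the technical core of the argument in \cite{BCP}, is the precise mechanism by which the $\Gamma$-chain bound of Hypothesis~3 is converted into a statement that, inside an $N$-block, the \emph{bad} sub-blocks of the previous scale are few and well separated (this is where the exponent $\fe > d+n+1$ and the interplay with Hypothesis~2 enter). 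Simply saying ``connected components of $\Gamma$-chains have bounded diameter, so invert them by induction'' skips the step that turns chain-length control into the required sparsity of $N$-bad sites; without that, one cannot guarantee that the losses in the resolvent identity remain polynomial in $N$.
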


\small

\def\cprime{$'$}


\begin{thebibliography}{DLvStvc08}

\bibitem[BGMR17]{BGMR2}
D.~{Bambusi}, B.~{Gr{\'e}bert}, A.~{Maspero}, and D.~{Robert}.
\newblock {Growth of Sobolev norms for abstract linear Schr{\"o}dinger
  Equations}.
\newblock {\em J. Eur. Math. Soc. (JEMS)}, in press, 2017.

\bibitem[BGMR18]{BGMR1}
D.~Bambusi, B.~Gr{\'e}bert, A.~Maspero, and D.~Robert.
\newblock Reducibility of the quantum harmonic oscillator in d-dimensions with
  polynomial time-dependent perturbation.
\newblock {\em Anal. PDE}, 11(3):775--799, 2018.



\bibitem[BB12]{bebo12}
M.~Berti and P.~Bolle.
\newblock Sobolev quasi-periodic solutions of multidimensional wave equations
  with a multiplicative potential.
\newblock {\em Nonlinearity}, 25(9):2579--2613, 2012.

\bibitem[BB13]{bebo13}
M.~Berti and P.~Bolle.
\newblock Quasi-periodic solutions with {S}obolev regularity of {NLS} on 
{${\mathbb  T}^d$} with a multiplicative potential.
\newblock {\em J. Eur. Math. Soc. (JEMS)}, 15(1):229--286, 2013.

\bibitem[BBP10]{BBP}
M.~Berti, P.~Bolle, and M.~Procesi.
\newblock An abstract {N}ash-{M}oser theorem with parameters and applications
  to {PDE}s.
\newblock {\em Ann. Inst. H. Poincar\'{e} Anal. Non Lin\'{e}aire},
  27(1):377--399, 2010.

\bibitem[BCP15]{BCP}
M.~Berti, L.~Corsi, and M.~Procesi.
\newblock An abstract {N}ash-{M}oser theorem and quasi-periodic solutions for
  {NLW} and {NLS} on compact {L}ie groups and homogeneous manifolds.
\newblock {\em Comm. Math. Phys.}, 334(3):1413--1454, 2015.



\bibitem[BP11]{BP}
M.~Berti and M.~Procesi.
\newblock Nonlinear wave and {S}chr\"{o}dinger equations on compact {L}ie
  groups and homogeneous spaces.
\newblock {\em Duke Math. J.}, 159(3):479--538, 2011.


\bibitem[BK18]{BK}
P.~Bolle and C. Khayamian.
\newblock Quasi periodic solutions for the forced NLW equation with potential
on Zoll manifolds.
\newblock {\em preprint} 2018.



\bibitem[Bou98]{bou98}
J.~Bourgain.
\newblock Quasi-periodic solutions of {H}amiltonian perturbations of 2{D}
  linear {S}h\"odinger equation.
\newblock {\em Ann. Math.}, 148:363--439, 1998.

\bibitem[Bou99a]{bou99}
J.~Bourgain.
\newblock Growth of {S}obolev norms in linear {S}chr\"odinger equations with
  quasi-periodic potential.
\newblock {\em Comm. Math. Phys.}, 204(1):207--247, 1999.

\bibitem[Bou99b]{bourgain99}
J.~Bourgain.
\newblock On growth of {S}obolev norms in linear {S}chr\"odinger equations with
  smooth time dependent potential.
\newblock {\em J. Anal. Math.}, 77:315--348, 1999.

\bibitem[Bou05]{Bou05}
J.~Bourgain.
\newblock { Green's function estimates for lattice {S}chr\"{o}dinger
  operators and applications}, volume 158 of {\em Annals of Mathematics
  Studies}.
\newblock Princeton Univ. Press, Princeton, NJ, 2005.


\bibitem[Bou07]{bou07}
J.~Bourgain.
\newblock On {S}trichartz's inequalities and the nonlinear {S}chr\"{o}dinger
  equation on irrational tori.
\newblock In {\em Mathematical aspects of nonlinear dispersive equations},
  volume 163 of {\em Ann. of Math. Stud.}, pages 1--20. Princeton Univ. Press,
  Princeton, NJ, 2007.

\bibitem[BD15]{bourgain15}
J.~Bourgain and C.~Demeter.
\newblock The proof of the $l^2$ decoupling conjecture.
\newblock {\em Ann. Math.}, 182(1):351--389, 2015.


\bibitem[CW10]{catoire10}
F.~Catoire and W.-M. Wang.
\newblock Bounds on {S}obolev norms for the defocusing nonlinear
  {S}chr\"{o}dinger equation on general flat tori.
\newblock {\em Comm. Pure Appl. Anal.}, 9(2):483--491, 2010.

\bibitem[CHP15]{CHP}
L.~Corsi, E.~Haus, and M.~Procesi.
\newblock A {KAM} result on compact {L}ie groups.
\newblock {\em Acta Appl. Math.}, 137:41--59, 2015.



\bibitem[Del10]{del2}
J.-M. Delort.
\newblock Growth of {S}obolev norms of solutions of linear {S}chr\"odinger
  equations on some compact manifolds.
\newblock {\em Int. Math. Res. Not. (IMRN)}, (12):2305--2328, 2010.

\bibitem[Del14]{del}
J.-M. Delort.
\newblock Growth of {S}obolev norms for solutions of time dependent
  {S}chr\"odinger operators with harmonic oscillator potential.
\newblock {\em Comm. PDE}, 39(1):1--33, 2014.

\bibitem[{Den}17]{deng217}
Y.~{Deng}.
\newblock {On growth of Sobolev norms for energy critical NLS on irrational tori: small energy case}.
\newblock {\em Comm. Pure App. Math.}, DOI:10.1002/cpa.21797,  2018.

\bibitem[DG17]{deng-germain17}
Y.~Deng and P.~Germain.
\newblock Growth of solutions to NLS on irrational tori.
\newblock {\em Int. Math. Res. Not. (IMRN)}, 2017.

\bibitem[DGG17]{deng17}
Y.~Deng, P.~Germain, and L.~Guth.
\newblock Strichartz estimates for the {S}chr\"{o}dinger equation on irrational
  tori.
\newblock {\em J. Funct. Anal.}, 273(9):2846--2869, 2017.

\bibitem[DLS08]{duclos}
P.~Duclos, O.~Lev, and P. Stov\'icek.
\newblock On the energy growth of some periodically driven quantum systems with
  shrinking gaps in the spectrum.
\newblock {\em J. Stat. Phys.}, 130(1):169--193, 2008.

\bibitem[EGK16]{EGK}
H.~Eliasson, B.~Gr\'{e}bert, and S.~Kuksin.
\newblock K{AM} for the nonlinear beam equation.
\newblock {\em Geom. Funct. Anal. (GAFA)}, 26(6):1588--1715, 2016.

\bibitem[EK09]{EK1} H. Eliasson and S. Kuksin, 
On reducibility of Schr\"odinger equations with
quasiperiodic in time potentials, {\it Comm. Math. Phys.}, 286, 125-135, 2009.

\bibitem[EK10]{EK10}
H.~Eliasson and S.~Kuksin.
\newblock K{AM} for the nonlinear {S}chr\"{o}dinger equation.
\newblock {\em Ann. of Math. (2)}, 172(1):371--435, 2010.


\bibitem[FZ12]{fang}
D.~Fang and Q.~Zhang.
\newblock On growth of {S}obolev norms in linear {S}chr\"odinger equations with
  time dependent {G}evrey potential.
\newblock {\em J. Dynam. Diff.  Eq.}, 24(2):151--180, 2012.



\bibitem[Gan59]{gantmacher}
F. Gantmacher.
\newblock {\em The Theory of Matrices. vol. 1 and vol. 2. }, Chelsea Publishing Company, New York 68, 1959. 

\bibitem[GXY11]{geng11}
J.~Geng, X.~Xu, and J.~You.
\newblock An infinite dimensional {KAM} theorem and its application to the two
  dimensional cubic {S}chr\"{o}dinger equation.
\newblock {\em Adv. Math.}, 226(6):5361--5402, 2011.



\bibitem[GP16]{GP2}
B.~{Gr{\'e}bert} and E.~Paturel.
\newblock {KAM for the Klein Gordon equation on $\mathbb S^d$}.
\newblock {\em Boll. Unione Mat. Ital.}, 9(2):237--288, 2016.






\bibitem[GHHMP18]{GHHMP}
M. Guardia, Z. Hani, E. Haus, A. Maspero, M. Procesi.
\newblock Strong nonlinear instability and growth of Sobolev norms near quasiperiodic finite-gap tori for the 2D cubic NLS equation
\newblock {\em ArXiv e-prints}, arXiv:1810.03694,
  2018.




\bibitem[GOW14]{guo-oh}
Z.~Guo, T.~Oh, and Y.~Wang.
\newblock Strichartz estimates for {S}chr\"{o}dinger equations on irrational
  tori.
\newblock {\em Proc. Lond. Math. Soc. (3)}, 109(4):975--1013, 2014.



\bibitem[{Mas}18]{ma18}
A.~{Maspero}.
\newblock {Lower bounds on the growth of Sobolev norms in some linear time
  dependent Schr\"odinger equations}.
\newblock {\em Math. Res. Lett. (MRL)}, in press,  2018.

\bibitem[MP18]{maspero_procesi}
A.~Maspero and M.~Procesi.
\newblock Long time stability of small finite gap solutions of the cubic
  nonlinear Schr\"{o}dinger equation on ${\mathbb T}^2$.
\newblock {\em J. Diff.  Eq.}, 265(7):3212--3309, 2018.

\bibitem[MR17]{maro}
A.~Maspero and D.~Robert.
\newblock On time dependent {S}chr{\"o}dinger equations: {G}lobal
  well-posedness and growth of {S}obolev norms.
\newblock {\em J.  Funct.  Anal.}, 273(2):721 -- 781, 2017.

\bibitem[Nen97]{nen}
G.~Nenciu.
\newblock Adiabatic theory: stability of systems with increasing gaps.
\newblock {\em Annales de l'I. H. P.}, 67-4:411--424, 1997.

\bibitem[PTV17]{visciglia17}
F.~Planchon, N.~Tzvetkov, and N.~Visciglia.
\newblock On the growth of {S}obolev norms for {NLS} on 2- and 3-dimensional
  manifolds.
\newblock {\em Anal. PDE}, 10(5):1123--1147, 2017.

\bibitem[PP15]{PP15}
C.~Procesi and M.~Procesi.
\newblock A {KAM} algorithm for the resonant non-linear {S}chr\"{o}dinger
  equation.
\newblock {\em Adv. Math.}, 272:399--470, 2015.

\bibitem[PP16]{pp_bumi}
M.~Procesi and C.~Procesi.
\newblock Reducible quasi-periodic solutions for the non linear {S}chr\"odinger
  equation.
\newblock {\em Boll. Unione Mat. Ital.}, 9(2):189--236, 2016.



\bibitem[SW18]{staffilani_wilson18}
G.~{Staffilani} and B.~{Wilson}.
\newblock {Stability of the Cubic Nonlinear Schrodinger Equation on Irrational
  Tori}.
\newblock {\em ArXiv e-prints}, 	arXiv:1806.01635,  2018.


\bibitem[Wan16]{WDuke}
W.-M. Wang.
\newblock Energy supercritical nonlinear {S}chr\"{o}dinger equations:
  quasiperiodic solutions.
\newblock {\em Duke Math. J.}, 165(6):1129--1192, 2016.

\bibitem[Wan17]{Wpre}
W.-M. Wang. 
\newblock Quasi-Periodic Solutions For Nonlinear Klein-Gordon Equations.
\newblock {\em ArXiv e-prints}, arXiv:1609.00309,  2016.



\end{thebibliography}
\end{document}